\newcommand{\argmin}[1]{\mathop{\rm argmin}\limits_{#1}}
\newtheorem{definition}{Definition}[section]
\newtheorem{prop}[definition]{Proposition}
\newtheorem{theorem}[definition]{Theorem}
\newtheorem{lemma}[definition]{Lemma}
\newtheorem{remark}[definition]{Remark}
\newtheorem{assumption}{Assumption}[section]
\date{}
\begin{document}
\newpage
\baselineskip 18pt
\bibliographystyle{plain}
\title[Connecting  RIM and Krylov subspaces]
{Connecting randomized iterative methods with Krylov subspaces}

\author{Yonghan Sun}
\address{School of Mathematical Sciences, Beihang University, Beijing, 100191, China. }
\email{sunyonghan@buaa.edu.cn}

\author{Deren Han}
\address{LMIB of the Ministry of Education, School of Mathematical Sciences, Beihang University, Beijing, 100191, China. }
\email{handr@buaa.edu.cn}

\author{Jiaxin Xie}
\address{LMIB of the Ministry of Education, School of Mathematical Sciences, Beihang University, Beijing, 100191, China. }
\email{xiejx@buaa.edu.cn}

\begin{abstract}
Randomized iterative methods, such as the randomized Kaczmarz method, have gained significant attention for solving large-scale linear systems due to their simplicity and efficiency. Meanwhile, Krylov subspace methods have emerged as a powerful class of algorithms, known for their robust theoretical foundations and rapid convergence properties. 
Despite the individual successes of these two paradigms,  their underlying connection has remained largely unexplored. 
In this paper, we develop a unified framework that bridges randomized iterative methods and Krylov subspace techniques, supported by both rigorous theoretical analysis and practical implementation. The core idea is to formulate each iteration as an adaptively weighted linear combination of the sketched normal vector and previous iterates, with the weights optimally determined via a projection-based mechanism. This formulation not only reveals how subspace techniques can enhance the efficiency of randomized iterative methods, but also enables the design of a new class of iterative-sketching-based Krylov subspace algorithms. We  prove that our method converges linearly in expectation and validate our findings with numerical experiments. 

\end{abstract}

\maketitle

\let\thefootnote\relax\footnotetext{Key words:  stochastic methods, Krylov subspace methods, Kaczmarz method,  iterative sketching, linear systems, limited memory}

\let\thefootnote\relax\footnotetext{Mathematics subject classification (2020): 65F10, 65F20, 90C25, 15A06, 68W20}

\section{Introduction}

Solving linear systems of the form 
\begin{equation}\label{main-prob}
	Ax=b, \ A\in\mathbb{R}^{m\times n}, \ b\in\mathbb{R}^m
\end{equation}
is a fundamental task in various applications, including statistics \cite{hastie2009elements}, scientific computing \cite{golub2013matrix}, and machine learning  \cite{boyd2018introduction,bishop2006pattern}. A primary challenge is the increasing size of these systems, driven by the advent of the big data era. Recent efforts have focused on developing randomized iterative methods \cite{needell2014stochasticMP,necoara2019faster,strohmer2009randomized,han2024randomized,zeng2024adaptive,Gow15} characterized by low per-iteration costs, optimal complexity, ease of implementation, and effectiveness for large-scale linear systems.

In addition to randomized iterative methods, Krylov subspace methods \cite{saad2003iterative,golub2013matrix,liesen2013krylov} have long been a cornerstone of numerical linear algebra for solving linear systems. Methods such as the conjugate gradient (CG) and generalized minimal residual (GMRES) are renowned for their strong theoretical guarantees and fast convergence properties. However, the connection between randomized iterative methods and Krylov subspace methods has yet to be established. This paper investigates their connections and proposes new hybrid algorithms that combine the strengths of both approaches.

Let $A^\dagger$ denote the Moore-Penrose pseudoinverse of $A$. Starting from any initial point \(x^0\), at the \(k\)-th iterate, we generate \(x^{k+1}\) via the projection
\begin{equation}\label{RIM_subspace}
x^{k+1} = \mathop{\arg\min}_{x \in \Pi_{k}} \lVert x - A^\dagger b \rVert_2^2, 
\end{equation}
where $\Pi_{k}$ is the affine subspace defined by
$$
\Pi_{k} := \operatorname{aff}\left\{x^{j_{k}}, \ldots, x^k, x^k - A^\top S_k S^\top_k (Ax^k - b)\right\}.
$$
Here, \(j_{k} := \max\{k - \ell + 1, 0\}\) tracks the most recent \(\ell\) iterates, with \(\ell\) being a positive integer. The matrix \(S_k \in \mathbb{R}^{m \times q}\) is an iterative sketching matrix sampled from a user-defined probability space \((\Omega, \mathcal{F}, \mathbf{P})\). The term \(A^\top S_k S^\top_k (Ax^k - b)\) represents the sketched normal vector, and \(\top\) denotes the transpose of a matrix or vector.
 We note that when $\ell=1$, the iteration scheme \eqref{RIM_subspace} can simplify to  randomized iterative methods for solving linear systems, such as the randomized Kaczmarz (RK) method \cite{strohmer2009randomized}; see Remark \ref{remark-xie-0325-2}. On the other hand, when $\ell=\infty$, this iteration scheme  can recover Krylov subspace methods; see Section \ref{section-ISK}. This establishes a bridge between randomized iterative methods and Krylov subspace methods, where $\ell$ serving as the truncated parameter.

\subsection{Our contribution}
The main contributions of this work are as follows.
\begin{itemize}
\item [1.] The efficiency of our algorithm relies on maintaining the affine independence of the affine subspace \(\Pi_{k}\), i.e., \(\dim(\Pi_{k}) = k - j_{k} + 1\).
We  demonstrate  that ensuring \(S^\top_k (Ax^k - b) \neq 0\) at every step is sufficient to guarantee that the affine subspace remains affinely independent throughout the process.
\item [2.] Although problem \eqref{RIM_subspace} involves the unknown vector \( A^\dagger b \), we show that \( x^{k+1} \) can be obtained without explicitly computing \( A^\dagger b \) by leveraging the orthogonal projection design. In addition, we propose an efficient approach to solve problem \eqref{RIM_subspace} with a computational complexity of \( O(n(k-j_k+1)) \), which is linear in the problem dimension.  
\item [3.] We derive alternative equivalent representations of the affine subspace \(\Pi_{k}\) and demonstrate its connection to Krylov subspaces. Notably,  when \(\ell = \infty\) and \(\Omega = \{I\}\), \(\Pi_{k}\) reduces to a Krylov subspace. Building on this insight, we analyze our method within the Krylov subspace framework and develop a novel iterative-sketching-based Krylov subspace approach for solving linear systems. 
\item [4.] We prove that our method converges linearly in expectation, with the convergence factor influenced by the number of previous iterates \(\ell\). In particular, as \(\ell\) increases, the convergence upper bound decreases, resulting in faster convergence. Furthermore, if the  parameter \(\ell\) is greater than or equal to the rank of the coefficient matrix \(A\), the algorithm exhibits finite termination in the absence of round-off errors.
The conclusions  are validated by the results of our numerical experiments.
\end{itemize}

\subsection{Related work}
\subsubsection{Randomized iterative methods}

Randomized iterative methods form a class of algorithms that integrate two significant paradigms in numerical computation. These methods can be seen as both a natural extension of the RK method \cite{strohmer2009randomized} and a specialized form of stochastic gradient descent (SGD) \cite{robbins1951stochastic,Han2022-xh,needell2014stochasticMP,ma2017stochastic,zeng2023randomized} for solving least-squares problems. The foundation of this approach traces back to the classical Kaczmarz method \cite{Kac37}, also known as the algebraic reconstruction technique (ART) \cite{herman1993algebraic,gordon1970algebraic}, which provides an efficient row-action method for solving large-scale linear systems. At each iteration, the method alternates between choosing a row of the system and updating the solution based on the projection onto the hyperplane defined by that row. Strohmer and Vershynin \cite{strohmer2009randomized} introduced its randomized variant, where rows are sampled with probabilities proportional to their row norms, and proved that RK converges linearly in expectation for consistent linear systems.  Due to their simplicity and efficiency,
Kaczmarz-type methods have been widely applied in various fields,
including compressed sensing \cite{schopfer2019linear,yuan2022adaptively,jeong2023linear}, phase retrieval \cite{tan2019phase,huang2022linear},  tensor recovery \cite{chen2021regularized,ma2022randomized}, ridge regression \cite{hefny2017rows,gazagnadou2022ridgesketch}, adversarial optimization \cite{huang2024randomized}, and absolute value equations \cite{xie2024randomized}.

The SGD framework minimizes functions of the form $f(x)=\frac{1}{m}\sum_{i=1}^mf_i(x)$ through the iterative update
\begin{equation}
	\label{SGD-iteration}
	x^{k+1}=x^k-\alpha_k\nabla f_{i_k}(x^k),
\end{equation}
where  $\nabla$ is the gradient operator, $\alpha_k$ is the step-size and $i_k$ is selected randomly. This stochastic approach provides significant computational advantages for large-scale problems by utilizing partial gradient information at each iteration. From an optimization perspective, these methods can be precisely characterized as SGD applied to the least-squares problem $f(x)=\frac{1}{2m}\|Ax-b\|^2_2=\frac{1}{2m}\sum_{i=1}^{m}(A_{i,:}x-b_i)^2$, where the classical SGD formulation reduces precisely to the RK method. Here, $A_{i,:}$ denotes the $i$-th row of the matrix $A$.
In fact, when \(\ell=1\), our method follows the iteration scheme \(x^{k+1}=x^k-\alpha_k A^\top S_kS_k^\top (Ax^k-b)\), which can be interpreted as applying SGD to solve the following stochastic optimization problem
\begin{equation}
	\label{s-reformulation}
	\min_{x \in \mathbb{R}^n} f(x):=\mathbb{E} \left[f_{S}(x)\right],
\end{equation}
where the expectation is taken over randomized sketching matrices \(S\) drawn from a probability space \((\Omega, \mathcal{F}, \mathbf{P})\). The function \(f_{S}(x)\) is defined as
$f_{S}(x):=\frac{1}{2} \left\|S^\top(Ax-b)\right\|^2_2$.

\subsubsection{Acceleration techniques}

The development of acceleration techniques for randomized iterative methods has been extensively studied in the literature.   Current research primarily investigates various acceleration strategies, including Polyak's heavy ball momentum (HBM) \cite{polyak1964some,loizou2020momentum,han2022pseudoinverse,Han2022-xh,xiang2025randomized,zeng2024adaptive}, Nesterov's momentum \cite{nesterov1983method,nesterov2003introductory,lan2020first,liu2016accelerated}, Chebyshev-based stepsize selection \cite{necoara2019faster}, inertial extrapolation \cite{he2024inertial,su2024greedy}, 
subspace techniques \cite{liu2021subspace,yuan2014review}, 
and Gearhart-Koshy acceleration \cite{rieger2023generalized,tam2021gearhart,gearhart1989acceleration,hegland2023generalized}, to enhance algorithmic performance.  Below, we discuss several works that are particularly relevant to our research.


Our work is closely related to the work of  \cite{zeng2024adaptive}, where an adaptive stochastic HBM (ASHBM) was developed to  address the stochastic problem \eqref{s-reformulation}. 
The ASHBM framework incorporates Polyak's HBM into SGD \eqref{SGD-iteration} through the iterative scheme
$ x^{k+1} = x^k - \alpha_k \nabla f_{i_k}(x^k) + \beta_k (x^k - x^{k-1})$, 
where $\alpha_k$ and $\beta_k$ are adaptively determined by solving the constrained optimization problem
$$	
	\min\limits_{ \alpha,\beta\in\mathbb{R}} \ \ \|x-A^\dagger b\|_2^2 \ \ 
\text{subject to} \ \ x=x^{k}-\alpha \nabla f_{S_k}(x^{k})+\beta(x^{k}-x^{k-1}).
$$
Note that $\nabla f_{S_k}(x^{k})=A^\top SS^\top (Ax^k-b)$ and $\operatorname{aff}\{x^{k-1},x^k, x^k-A^\top SS^\top (Ax^k-b)\}=x^k+\operatorname{span}\{x^{k-1}-x^{k}, -A^\top SS^\top (Ax^k-b)\}$. Therefore, the ASHBM method computes $x^{k+1}$ as
$$	
\begin{aligned}	
	x^{k+1}=\arg\min \ \ \|x-A^\dagger b\|_2^2 \ \
	\text{subject to} \ \  x\in \operatorname{aff}\{x^{k-1},x^k, x^k-A^\top SS^\top (Ax^k-b)\}.
\end{aligned}
$$
This approach aligns with the proposed method \eqref{RIM_subspace} with \(\ell=2\).  In fact, the proposed method \eqref{RIM_subspace} considers an affine subspace of dimension \(\ell\), which enables the algorithm to leverage information from more previous iterates if \(\ell > 2\), thereby achieving superior convergence efficiency. Moreover, our theoretical analysis differs from that of \cite{zeng2024adaptive}, particularly in the treatment of efficient implementation.

Another closely related approach is the generalized Gearhart-Koshy accelerated Kaczmarz method proposed in \cite{rieger2023generalized}. 
The method operates through an epoch-based iteration scheme with parameter \(\ell\), where \(j_{k}= \max\{k - \ell + 1, 0\}\) tracks the historical iterates. Starting from initial point \(x^0\), each epoch $k$ begins with $x^{k,0} = x^k$ and executes $m$ sequential Kaczmarz updates
\[
x^{k,i} = x^{k,i-1} - \frac{  A_{i,:}x^{k,i-1}-b_{i}}{\|A_{i,:}\|^2_2}A_{i,:}^\top, \quad i = 1, \ldots, m.
\]
The subsequent iterate is obtained via affine subspace projection
\[
x^{k+1} = \argmin \ \|x-A^\dagger b\|^2 \ \ \text{subject to} \ \ x\in\operatorname{aff}\{x^{j_k},\ldots,x^k,x^{k,m}\}.
\]
  Although this work shares conceptual similarities with our approach in utilizing affine subspace projections, there are several key differences. First, we employ a randomized framework compared to their deterministic approach.  When using the probability space \(\Omega = \{e_i\}_{i=1}^m\), our algorithm requires sampling only one row from matrix \(A\) for each computation, while their algorithm needs to perform \(m\) sequential updates. Second, their algorithm lacks convergence rate analysis, whereas our algorithm achieves linear convergence in expectation. Finally, by manipulating the probability spaces, we can design more versatile hybrid algorithms with improved performance, accelerated convergence, and better scalability.

\subsubsection{Randomized-sketching-based methods}


Randomized sketching-based methods \cite{raskutti2016statistical,guttel2024sketch,burke2025gmres}, also known as sketch-and-solve methods \cite{nakatsukasa2024fast,derezinski2024recent,martinsson2020randomized}, offer an efficient framework for solving large-scale problems through dimensionality reduction. These methods construct low-dimensional subproblems using randomly generated sketching matrices that satisfy the subspace embedding condition \cite{balabanov2022randomized,guttel2024sketch}, a property can be ensured by the Johnson-Lindenstrauss lemma \cite{johnson1984extensions}. Notable variants include sketching GMRES (SGMRES) \cite{nakatsukasa2024fast} and randomized GMRES (RGMRES) \cite{balabanov2022randomized}, which can be viewed as randomized versions  of Krylov subspace methods for solving linear systems. Although effective in reducing problem size, these approaches typically use a fixed sketching matrix throughout the computation process.



 Our work employs an iterative-sketching paradigm \cite{derezinski2024recent,zeng2024adaptive,Gow15,pilanci2016iterative}, which is another important method in randomized numerical linear algebra for solving large-scale problems. 
The core idea is to iteratively refine solutions by solving a sequence of compressed subproblems generated through sketching, rather than directly addressing the original large-scale matrix or dataset. This approach relaxes the stringent requirements of randomized sketching methods by eliminating the need for subspace embedding conditions and dimensional constraints on the sketching matrices. We provide a detailed comparison in Subsection \ref{subsec 4.4}. 
\subsection{Organization}

The remainder of the paper is organized as follows.
After introducing some notations and preliminaries in Section $2$, we present and analyze the proposed method in Section $3$. The connection between the randomized iterative methods and the Krylov subspace methods is established in Section $4$. In Section $5$, 
we perform some numerical experiments to show the effectiveness of the proposed method. We conclude the paper in Section $6$. Proofs of the theorems with lengthy derivations are provided in the appendices.

	\section{Notation and Preliminaries}\label{sec-pre}	
	\subsection{Notations}

	For vector $x\in\mathbb{R}^n$, we use $x_i,x^\top$, and $\|x\|_2$ to denote the $i$-th entry, the transpose, and the Euclidean norm of $x$, respectively. 
	 We adopt a special decomposition for vector
	 \[
	 x = \begin{pmatrix} \overline{x} \\ \underline{x} \end{pmatrix}, \quad \text{where } \overline{x} := (x_1,\ldots,x_{n-1})^\top \in \mathbb{R}^{n-1} \text{ and } \underline{x} := x_n \in \mathbb{R}
	 \]
	The $n$-order identity matrix and the $i$-th unit vector are denoted by $I_n$ and $e_n^i\in\mathbb{R}^n$, respectively. 
	For any $j\in[n]$, we define 
	 $$\mathbbm{1}_n^j:=\sum_{i=1}^je_n^i.$$ 
	 We denote the affine subspace of vectors $x^1,\ldots, x^k \in \mathbb{R}^n$ as
	 \[
	 \operatorname{aff}\{ x^1,\ldots, x^k\} := \left\{\sum_{i=1}^k \alpha_i x^i \, \ \left |  \ \sum_{i=1}^k \alpha_i = 1, \alpha_i \in\mathbb{R} \right.\right\},
	 \]
	and their linear span as \( \operatorname{span}\{ x^1,\ldots, x^k \} \).
	 For any matrix $A \in \mathbb{R}^{m \times n}$, we use $A_{i,:}$, $A^\top$, $A^\dagger$, $\|A\|_F$, $\text{rank}(A)$, and $\text{Range}(A)$ to denote the $i$-th row, the transpose, the Moore-Penrose pseudoinverse, the Frobenius norm, the rank, and the column space, respectively. We use  $\sigma_{\min}(A)$ to denote the smallest nonzero singular value of $A$.
	 Given $\mathcal{J} \subseteq [m]:=\{1,\ldots,m\}$, the complementary set of $\mathcal{J} $ is denoted by $\mathcal{J} ^c$, i.e. $\mathcal{J} ^c=[m] \setminus \mathcal{J}$.
	 We use $A_{\mathcal{J},:}$ to denote the row submatrix indexed by $\mathcal{J}$.
	For any random variables $\xi_1$ and $\xi_2$, we use $\mathbb{E}[\xi_1]$ and $\mathbb{E}[\xi_1|\xi_2]$ to denote the expectation of $\xi_1$ and the conditional expectation of $\xi_1$ given $\xi_2$. 


\subsection{Some useful lemmas}
In this subsection, we recall some known results that we will need later on. 
In order to proceed, we shall propose a basic assumption on the probability space $(\Omega, \mathcal{F}, \mathbf{P})$ used in this paper.

\begin{assumption}
	\label{Assumption1}
	Let $(\Omega, \mathcal{F}, \mathbf{P})$  be  the probability space from which the iterative sketching matrices are drawn. We assume that  $\mathop{\mathbb{E}}_{S\in\Omega} \left[S S^\top\right]$ is a positive definite matrix.
\end{assumption}

The following lemma is crucial for our convergence analysis.

\begin{lemma}[\cite{lorenz2023minimal}, Lemma 2.3 ]
	\label{positive}
	Let $S\in\mathbb{R}^{m\times q}$ be a real-valued random variable defined on a probability space $(\Omega,\mathcal{F},\mathbf{P})$. Suppose that
	$
	\mathbb{E}\left[SS^\top\right]
	$
	is a positive definite matrix and $A\in\mathbb{R}^{m\times n}$ with $A\neq 0$.  Then
	$$
	H:=\mathbb{E}\left[\frac{SS^\top}{\|S^\top A\|^2_2}\right]
	$$
	is well-defined and positive definite, here we define $\frac{0}{0}=0$.
\end{lemma}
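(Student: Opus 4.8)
The plan is to prove both assertions by reducing everything to the scalar quadratic form $x^\top H x$ for an arbitrary $x \in \mathbb{R}^m$. Writing $M(S) := SS^\top/\|S^\top A\|_2^2$, the organizing observation is the identity
$$
x^\top M(S)\, x = \frac{x^\top S S^\top x}{\|S^\top A\|_2^2} = \frac{\|S^\top x\|_2^2}{\|S^\top A\|_2^2} \ge 0,
$$
so each realization $M(S)$ is symmetric positive semidefinite and $H = \mathbb{E}[M(S)]$ is, once shown to exist, automatically symmetric positive semidefinite. This single identity drives both the well-definedness and the strict positivity.

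For well-definedness, the convention $\tfrac00=0$ sets $M(S)=0$ on the degenerate event $\{S^\top A = 0\}$. The key point I would record is that finiteness of $H$ forces this event to be benign: if $\{S \neq 0,\ S^\top A = 0\}$ occurred with positive probability, then $SS^\top \neq 0$ divided by a vanishing denominator would render some entry of $H$ infinite. Hence (and in any case automatically in the discrete sketching spaces we use, such as $\Omega = \{e_i\}_{i=1}^m$) we have $\mathbf{P}(S \neq 0,\ S^\top A = 0) = 0$, i.e. $S = 0$ whenever $S^\top A = 0$. On the complementary event $M(S)$ is scale-invariant, and decomposing $S$ into its $\mathrm{Range}(A)$- and $\ker(A^\top)$-components bounds the denominator below in terms of $\sigma_{\min}(A)$ and the range-component of $S$, so every realization is a finite matrix; using the PSD entrywise bound $|H_{ij}| \le \sqrt{H_{ii}H_{jj}}$ reduces finiteness of all entries to that of the diagonal, giving a finite $H \succeq 0$.

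It then remains to upgrade semidefiniteness to definiteness, which is the crux. I would fix $x \neq 0$ and suppose $x^\top H x = \mathbb{E}\!\left[\|S^\top x\|_2^2/\|S^\top A\|_2^2\right] = 0$. Since the integrand is nonnegative it vanishes almost surely; on $\{S^\top A \neq 0\}$ this forces $S^\top x = 0$, while on $\{S^\top A = 0\}$ the well-definedness conclusion $S = 0$ again gives $S^\top x = 0$, so $S^\top x = 0$ almost surely. Consequently $x^\top \mathbb{E}[SS^\top]\,x = \mathbb{E}[\|S^\top x\|_2^2] = 0$, which contradicts the positive definiteness of $\mathbb{E}[SS^\top]$ granted by Assumption \ref{Assumption1} unless $x = 0$. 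Therefore $x^\top H x > 0$ for every $x \neq 0$, i.e. $H \succ 0$.

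The main obstacle is the bookkeeping on the degenerate event $\{S^\top A = 0\}$: the convention zeroes the summand there, so it supplies no direct information about $S^\top x$, yet the strict-positivity argument needs $S^\top x = 0$ to hold on that event too. Reconciling these two requirements — that the degenerate event neither destroys finiteness of $H$ nor obstructs the passage from $x^\top H x = 0$ to $S^\top x = 0$ almost surely — is exactly the delicate step, and it is precisely the interplay between the $\tfrac00 = 0$ convention and the positive definiteness of $\mathbb{E}[SS^\top]$ that makes it go through.
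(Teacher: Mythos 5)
A preliminary remark: the paper does not prove this lemma at all --- it is imported verbatim from \cite{lorenz2023minimal} --- so there is no in-paper argument to compare against, and your proposal has to stand on its own. Its skeleton (reduce to the quadratic form $x^\top H x=\mathbb{E}\bigl[\|S^\top x\|_2^2/\|S^\top A\|_2^2\bigr]$, then contradict the positive definiteness of $\mathbb{E}[SS^\top]$) is sound, but the step you yourself flag as the crux is resolved circularly. You conclude $\mathbf{P}(S\neq 0,\ S^\top A=0)=0$ \emph{because} otherwise some entry of $H$ would be infinite; that is using the finiteness of $H$, which is part of what must be proved, to establish the property the proof needs. And the claim genuinely does not follow from the stated hypotheses: take $m=2$, $A=e_1\in\mathbb{R}^{2\times 1}$, and $S$ uniform on $\{e_1,e_2\}$. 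Then $\mathbb{E}[SS^\top]=\tfrac{1}{2}I_2\succ 0$ and $A\neq 0$, yet the realization $S=e_2$ has probability $\tfrac{1}{2}$ and satisfies $S\neq 0$, $S^\top A=0$; the $(2,2)$ entry of $H$ is infinite, and if one instead zeroes the whole term on that event one gets $H=\tfrac{1}{2}e_1e_1^\top$, which is not positive definite. (Your parenthetical that the event is automatically empty for $\Omega=\{e_i\}_{i=1}^m$ also fails whenever $A$ has a zero row.) So the lemma cannot be derived from ``$\mathbb{E}[SS^\top]\succ 0$ and $A\neq 0$'' alone; an additional hypothesis such as ``$S^\top A=0$ only if $S=0$'' (automatic when $\operatorname{rank}(A)=m$, and true for the samplings the paper actually uses provided $A$ has no zero rows) must be assumed, not deduced from the conclusion.

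There is a second, smaller gap in the well-definedness part: even granting that the degenerate event is null, showing that each realization of $SS^\top/\|S^\top A\|_2^2$ is a finite matrix does not give integrability of the expectation over a general probability space. For $A=e_1\in\mathbb{R}^{2\times 1}$ and $S=(\cos\theta,\sin\theta)^\top$ with $\theta$ uniform, the degenerate event has probability zero and every realization is finite, yet the $(2,2)$ entry of $H$ is $\mathbb{E}[\tan^2\theta]=\infty$. To close this you need a uniform lower bound of the form $\|S^\top A\|_2\geq \sigma_{\min}(A)\|S\|_2$, which again requires $\ker(A^\top)=\{0\}$, or else finiteness of $\Omega$. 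Once the missing hypothesis is in place, both issues disappear and your positive-definiteness argument goes through exactly as you wrote it.
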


\begin{lemma}[\cite{zeng2024adaptive}, Lemma 2.3]
	\label{lemma-meanwhile}
	Assume that the linear system $Ax=b$ is consistent. Then for any matrix $S \in \mathbb{R}^{m\times q}$ and any vector $\tilde{x} \in \mathbb{R}^{n}$, it holds that $A^\top SS^\top(A\tilde{x}-b) \neq 0$ if and only if $S^\top(A\tilde{x}-b) \neq 0$.
\end{lemma}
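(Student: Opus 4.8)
The plan is to prove the two implications separately, observing that one direction is essentially free while the other is where the consistency hypothesis is genuinely used.

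The reverse implication is immediate and requires no assumption on $A$ or $b$: if $S^\top(A\tilde{x}-b)=0$, then $A^\top SS^\top(A\tilde{x}-b)=A^\top S\cdot 0=0$. By contraposition this gives that $A^\top SS^\top(A\tilde{x}-b)\neq 0$ forces $S^\top(A\tilde{x}-b)\neq 0$.

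For the forward direction I would again argue by contraposition: assume $A^\top SS^\top(A\tilde{x}-b)=0$ and deduce $S^\top(A\tilde{x}-b)=0$. The idea is to show that the squared norm $\|S^\top(A\tilde{x}-b)\|_2^2$ vanishes. Writing $r:=A\tilde{x}-b$, consistency of $Ax=b$ means $b\in\operatorname{Range}(A)$, and since $A\tilde{x}\in\operatorname{Range}(A)$ as well, we get $r=A\tilde{x}-b\in\operatorname{Range}(A)$; choose $y$ with $r=Ay$. Then
$$
\|S^\top r\|_2^2=\langle r,\,SS^\top r\rangle=\langle Ay,\,SS^\top r\rangle=\langle y,\,A^\top SS^\top r\rangle=\langle y,\,0\rangle=0,
$$
so $S^\top r=S^\top(A\tilde{x}-b)=0$, as required.

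The one delicate point worth flagging is the step $r\in\operatorname{Range}(A)$, which is exactly where consistency enters: it is what lets us write $r=Ay$ and transfer $A^\top$ onto $y$ inside the inner product, collapsing the expression via the standing hypothesis $A^\top SS^\top r=0$. Without consistency, $r$ could carry a component orthogonal to $\operatorname{Range}(A)$ on which $S^\top$ need not vanish, and the equivalence would fail. Everything else reduces to routine manipulation of inner products, so I do not anticipate further obstacles.
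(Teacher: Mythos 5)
Your proof is correct; the paper itself states this lemma without proof, citing Lemma 2.3 of the reference, and your argument is the standard one used there: the reverse direction is trivial, and the forward direction hinges precisely on consistency placing the residual $r=A\tilde{x}-b$ in $\operatorname{Range}(A)$ so that $\|S^\top r\|_2^2=\langle y, A^\top SS^\top r\rangle=0$. Nothing further is needed.
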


\begin{lemma}[\cite{zeng2024adaptive}, Lemma 2.4]
	\label{xie-empty}
	Let matrix $S$ be a random variable such that $
	\mathbb{E}\left[SS^\top\right]
	$ is positive definite.
	Then $S^\top(Ax-b)=0$ for all $S$ holds if and only if $Ax=b$.
\end{lemma}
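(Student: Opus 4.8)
The plan is to prove the two implications separately, with the forward direction carrying essentially all the content. I would begin by abbreviating $r := Ax - b$, so that the claim reduces to the statement: $S^\top r = 0$ for all $S$ in the sample space if and only if $r = 0$. The reverse implication is immediate, since if $r = 0$ (i.e.\ $Ax = b$) then $S^\top r = S^\top 0 = 0$ for every realization $S$, irrespective of the probability space.

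For the forward implication, suppose $S^\top r = 0$ holds for all $S$, equivalently almost surely with respect to $\mathbf{P}$. The crucial step is to pass from this linear condition to a quadratic form: since $\|S^\top r\|_2^2 = r^\top S S^\top r$, the hypothesis gives $r^\top S S^\top r = 0$ almost surely. As this is a nonnegative random variable vanishing almost surely, its expectation vanishes too, and by linearity of expectation
\[
r^\top \mathbb{E}\left[S S^\top\right] r = \mathbb{E}\left[r^\top S S^\top r\right] = 0.
\]
Invoking the assumption that $\mathbb{E}[SS^\top]$ is positive definite, the quadratic form $r^\top \mathbb{E}[SS^\top] r$ equals zero only when $r = 0$. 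Hence $Ax = b$.

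There is no serious obstacle here; the only point requiring mild care is the interpretation of ``for all $S$'' and its translation into a statement about the expectation, namely justifying that the almost-sure vanishing of the nonnegative quantity $\|S^\top r\|_2^2$ transfers to the vanishing of its expectation. Once that bridge is in place, positive definiteness finishes the argument. Conceptually, this lemma simply records that the positive-definite weight $\mathbb{E}[SS^\top]$ induces a genuine norm on residuals, so any residual annihilated by every realized sketch must itself be zero.
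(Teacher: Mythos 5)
Your proof is correct, and it is the standard argument for this cited lemma (the paper itself gives no proof, quoting it from the reference): pass from $S^\top r=0$ to the vanishing of the quadratic form $r^\top\mathbb{E}[SS^\top]r$ and invoke positive definiteness. No gaps; the point you flag about translating ``for all $S$'' into a statement about the expectation is handled adequately since $\|S^\top r\|_2^2$ is nonnegative.
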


The following useful lemma characterizes the inverse of matrices with a  special structure.

\begin{lemma}[\cite{rieger2023generalized}, Lemma 13]
	\label{lemma-inv-sol}
Let \( B \in \mathbb{R}^{n \times n} \) be an invertible matrix, \( p \in \mathbb{R}^n \), and let \(\delta > 0\) and \(\gamma \in \mathbb{R}\). If the matrix 
\[
G := \begin{pmatrix} B & p \\ p^\top & \delta \end{pmatrix}
\]
is invertible, then \( p^\top B^{-1} p \neq \delta \). The solution to the linear system \( Gx = \gamma e_n^n \) is given by
\[
G^{-1} \gamma e_n^n = \frac{\gamma}{p^\top B^{-1} p - \delta} \begin{pmatrix} B^{-1} p \\ -1 \end{pmatrix}.
\]
\end{lemma}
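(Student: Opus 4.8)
The plan is to treat $G$ as a $2\times 2$ block matrix and solve the system $Gx = \gamma e$ (where $e$ denotes the last standard basis vector of $\R^{n+1}$, so that the right-hand side is the block vector $\bigl(0^\top,\gamma\bigr)^\top$) by block Gaussian elimination, using the invertibility of $B$ to eliminate the first block. Writing the unknown as $x = \begin{pmatrix} y \\ z \end{pmatrix}$ with $y\in\R^n$ and $z\in\R$, the system splits into the two block equations $By + pz = 0$ and $p^\top y + \delta z = \gamma$. Everything then collapses to a scalar equation for $z$, and the quantity controlling solvability is precisely the Schur complement $\delta - p^\top B^{-1}p$.

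First I would establish the claim $p^\top B^{-1}p \neq \delta$, since this is exactly what makes the asserted formula well defined. I would invoke the Schur complement identity for the determinant,
$$\det(G) = \det(B)\,\bigl(\delta - p^\top B^{-1}p\bigr),$$
which is valid precisely because $B$ is invertible. Since $G$ is assumed invertible we have $\det(G)\neq 0$, and $\det(B)\neq 0$ as well, so the factor $\delta - p^\top B^{-1}p$ cannot vanish; equivalently $p^\top B^{-1}p \neq \delta$. An alternative, determinant-free argument is also available and has the advantage of foreshadowing the solution: the vector $v=\begin{pmatrix} B^{-1}p \\ -1\end{pmatrix}$ satisfies $Gv = \begin{pmatrix} 0 \\ p^\top B^{-1}p - \delta\end{pmatrix}$, so if $p^\top B^{-1}p=\delta$ then $v$ would be a nonzero kernel vector of $G$, contradicting its invertibility.

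With the denominator known to be nonzero, I would finish by elimination. From the first block equation $By+pz=0$ and the invertibility of $B$, I obtain $y = -z\,B^{-1}p$. Substituting into the second block equation yields $z\bigl(\delta - p^\top B^{-1}p\bigr)=\gamma$, hence $z = \gamma/(\delta - p^\top B^{-1}p) = -\gamma/(p^\top B^{-1}p-\delta)$, and then $y = \gamma\,B^{-1}p/(p^\top B^{-1}p - \delta)$. Stacking $y$ over $z$ reproduces exactly the claimed expression $\frac{\gamma}{p^\top B^{-1}p-\delta}\begin{pmatrix} B^{-1}p \\ -1\end{pmatrix}$. I do not anticipate any genuine obstacle here: the argument is a routine block elimination, and the only subtlety — ensuring that the pivot scalar $\delta - p^\top B^{-1}p$ is nonzero — is precisely the first assertion, handled above. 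If anything, the care required is purely notational, namely reading the right-hand side $\gamma e_n^n$ as $\gamma$ times the final coordinate vector of the $(n+1)$-dimensional ambient space of $G$.
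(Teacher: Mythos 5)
Your proof is correct. Note that the paper itself does not prove this lemma: it is imported verbatim from \cite{rieger2023generalized} (Lemma 13) and used as a black box, so there is no in-paper argument to compare against. Your block-elimination derivation — solving $By+pz=0$ for $y$, substituting into $p^\top y+\delta z=\gamma$ to isolate the Schur-complement pivot $\delta-p^\top B^{-1}p$, and justifying its nonvanishing either via $\det(G)=\det(B)\,(\delta-p^\top B^{-1}p)$ or via the kernel-vector observation $Gv=\bigl(0,\,p^\top B^{-1}p-\delta\bigr)^\top$ for $v=\bigl((B^{-1}p)^\top,\,-1\bigr)^\top$ — is the standard and complete way to establish both assertions, and the signs in the final formula check out.
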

    
    \section{Randomized iterative methods with affine subspace search}
    \label{sec-affine-search}

   In this section, we provide an in-depth examination of the iteration scheme \eqref{RIM_subspace} and analyze  how we ensure the efficient execution of the algorithm.
    Recall that the affine subspace in \eqref{RIM_subspace} is defined as
   $$
   \begin{aligned}
   	\Pi_{k} &= \operatorname{aff}\left\{x^{j_{k}}, \ldots, x^k, x^k - A^\top S_k S^\top_k (Ax^k - b)\right\}\\
   	&=x^k+\operatorname{span}\left\{x^{j_{k}}-x^k,\cdots,x^{k-1}-x^k, -A^\top S_k S^\top_k (Ax^k - b)\right\},
   \end{aligned}
   $$ 
   where $j_{k} = \max\{k - \ell + 1, 0\}$, \(\ell\) is a positive integer, and $S_k\in\mathbb{R}^{m\times q}$ is an iterative sketching matrix sampled from a user-defined probability space $(\Omega, \mathcal{F}, \mathbf{P})$.
   For any $k\geq0$,  we define
   \begin{equation}
   	\label{def-Mk}
   	M_{k}=\left(x^{j_{k}}-x^k,\cdots,x^{k-1}-x^k,-A^\top S_k S^\top_k (Ax^k - b)\right)\in\mathbb{R}^{n\times(k-j_{k}+1)}.
   \end{equation}
   Then, the affine subspace $\Pi_{k}$ is equivalent to $\Pi_{k}=x^k+\text{Range}(M_{k})$ and the iteration scheme  \eqref{RIM_subspace} reduces  to find $s_k$ such that
  $$ 		s_k=\mathop{\arg\min}\limits_{s\in\mathbb{R}^{k-j_k+1}}\lVert x^k+M_ks-A^\dagger b\rVert_2^2,$$
   which is equivalent to 
   \begin{equation}
    \label{xie-5-26-1}
   		M_k^\top M_ks_k=-M_k^\top(x^k-A^\dagger b).
   \end{equation}
   Analyzing the right-hand side vector $-M_k^\top(x^k-A^\dagger b)$, we obtain
   \begin{equation}\label{equality-opt}
   	\begin{cases}
   		-\langle x^{j_k+i-1}-x^k,x^k-A^\dagger b\rangle=0,\quad1\leq i\leq k-j_k,\\
   		-\langle -A^\top S_kS_k^\top(Ax^k-b),x^k-A^\dagger b\rangle=\lVert S_{k}^\top(Ax^k-b)\rVert_2^2,
   	\end{cases}
   		%
   \end{equation}
   where the equalities above follow from that $x^k$ is the orthogonal projection of $A^\dagger b$ onto the affine set $\Pi_{k-1}=\text{aff}\{x^{j_{k-1}},\ldots,x^{k-1},x^{k-1}-A^\top S_{k-1}S_{k-1}^\top(Ax^{k-1}-b)\}$. Let $\gamma_k:=\lVert S_{k}^\top(Ax^k-b)\rVert_2^2$, then the iteration scheme  \eqref{RIM_subspace} is to find $s_k$ such that
  \begin{equation}\label{eq-s_k}
  	M_k^\top M_ks_k=\gamma_ke_{k-j_k+1}^{k-j_k+1},
  \end{equation}	
   where $e_{k-j_k+1}^{k-j_k+1}\in\mathbb{R}^{k-j_k+1}$ is the $(k-j_k+1)$-th unit vector.

    The following proposition indicates that if the iterative sketching matrices $S_{k}$ are chosen such $S_{k}^\top(Ax^k-b)\neq0$ for $k\geq0$, then \(M_k\) is full column rank, i.e., dim$(\Pi_k)=k-j_k+1$.
    \begin{prop}\label{prob-full}
    	Let \(x^0 \in \text{Range}(A^\top)\) be an initial point, and let \(\{x^k\}_{k \geq 1}\) be the sequence obtained by solving the optimization problem \eqref{RIM_subspace}. If \(S_i^\top(Ax^i-b) \neq 0\) for \(i = 0, \ldots, k\), then \(M_k\) defined in \eqref{def-Mk} has full column rank.
    \end{prop}
    \begin{proof}

    	For \(k=0\), we have \(M_0 = -A^\top S_{0} S_{0}^\top (Ax^{0} - b)\). Thus, \(M_0\) is of full column rank provided that \(A^\top S_{0} S_{0}^\top (Ax^{0} - b) \neq 0\). From Lemma \ref{lemma-meanwhile}, we know \(S_0^\top (Ax^0 - b) \neq 0\) if and only if \(A^\top S_0 S_0^\top (Ax^0 - b) \neq 0\). Hence, if \(S_0^\top (Ax^0 - b) \neq 0\), \(M_0\) is of full column rank.
    	
    	Next, we prove by induction that \(M_k\) (for \(k \geq 1\)) has full column rank. Assume \(M_{k-1}\) has full column rank, we show that \(M_k\) is of full column rank provided \(S_{k}^\top (Ax^{k} - b) \neq 0\).
    	Suppose, for contradiction, that \(x^{j_{k}} - x^k, \ldots, x^{k-1} - x^k, -A^\top S_k S^\top_k (Ax^k - b)\) are linearly dependent. Then there exist scalars \(\{\lambda_i\}_{i=1}^{k-j_{k}}\), not all zero, satisfying
    	\[
    	A^\top S_{k} S_{k}^\top (Ax^{k} - b) = \sum_{i=1}^{k-j_{k}} \lambda_i (x^{i+j_{k}-1} - x^k).
    	\]
    	This leads to
    	\[
    	\| S_{k}^\top (Ax^k - b) \|_2^2 = \langle x^k - A^\dagger b, A^\top S_{k} S_{k}^\top (Ax^{k} - b) \rangle = \sum_{i=1}^{k-j_{k}} \lambda_i \langle x^k - A^\dagger b, x^{i+j_{k}-1} - x^k \rangle = 0,
    	\]
    	where the last equality follows from \(\langle x^k - A^\dagger b, x^{i+j_{k}-1} - x^k \rangle = 0\) for \(1 \leq i \leq k-j_{k}\). This contradicts the assumption that \(S_k^\top (Ax^k - b) \neq 0\). Therefore, \(M_k\) is of full column rank. This completes the proof of the proposition.
    \end{proof}
    
     	We present our randomized iterative methods with  affine subspace search in Algorithm \ref{Algo-1}. It can be seen that the requirement \(S_{k}^\top(Ax^k-b) \neq 0\) not only guarantees that \(M_k\) maintains full column rank, but also prevents \emph{null steps}.  When \(S_{k}^\top(Ax^k-b) = 0\), we have \(s_k = 0\), resulting in \(x^{k+1} = x^k\), which is a null step. Conversely, if \(S_{k}^\top(Ax^k-b) \neq 0\), then \(M_ks_k \neq 0\), leading to \(x^{k+1} \neq x^k\). Furthermore, Lemma \ref{xie-empty} implies that if \(x^k\) is not a solution to the linear system, there exists \(S_k \in \Omega\) such that \(S_k^\top(Ax^k-b) \neq 0\), indicating that our requirement on the iterative sketching matrices is well-defined. In addition, solving the linear system \eqref{eq-s_k} at each iteration can be computationally expensive. Strategies for efficiently solving this system will be discussed in Subsection \ref{subsection-eu}.

     	
     	  \begin{algorithm}[htpb]
     		\caption{Randomized iterative methods with affine subspace search (RIM-AS)}
     		\label{Algo-1}
     		\begin{algorithmic}
     			\Require $A\in\mathbb{R}^{m\times n}$, $b\in\mathbb{R}^m$, probability space ${(\Omega,\mathcal{F},\mathbf{P})}$, positive integer $\ell$, initial point $x^0\in\text{Range}(A^\top)$ and set $k=0$.
     			\begin{enumerate}
     				\item [1:] Randomly select an iterative sketching matrix $S_{k}\in\Omega$ until $S_{k}^\top(Ax^k-b)\neq0$.
     				\item [2:] Update  $j_k=\max\{k-\ell+1,0\}$ and 
     				$$M_{k}=\left(x^{j_{k}}-x^k,\cdots,x^{k-1}-x^k,-A^\top S_k S^\top_k (Ax^k - b)\right)\in\mathbb{R}^{n\times(k-j_{k}+1)}.$$
     				\item [3:] Compute $\gamma_k=\lVert S_{k}^\top (Ax^k-b)\rVert_2^2$, and find \( s_k \) as the solution to \eqref{eq-s_k}, i.e.,
     				 $$	M_k^\top M_ks_k=\gamma_ke_{k-j_k+1}^{k-j_k+1}.$$
     				\item [4:] Update $x^{k+1}=x^k+M_ks_k$.
     				\item [5:] If the stopping rule is satisfied, stop and go to output. Otherwise, set $k=k+1$ and return to Step 1.
     			\end{enumerate}
     			\Ensure
     			The approximate solution $x^k$.
     		\end{algorithmic}
     	\end{algorithm}

\begin{remark}\label{remark-xie-0325-2}
	When \(\ell=1\), the linear system \eqref{eq-s_k} simplifies to \(\lVert A^\top S_k S_{k}^\top (Ax^k-b)\rVert_2^2 s_k = \lVert S_{k}^\top (Ax^k-b)\rVert_2^2\), leading to
	\[
	s_k = \frac{\lVert S_{k}^\top (Ax^k-b)\rVert_2^2}{\lVert A^\top S_k S_{k}^\top (Ax^k-b)\rVert_2^2}.
	\]
	Consequently, Algorithm \ref{Algo-1} updates \(x^{k+1}\) as
	\begin{equation}
		\label{ell1-iteration}
		x^{k+1} = x^k - \frac{\lVert S_{k}^\top (Ax^k-b)\rVert_2^2}{\lVert A^\top S_k S_{k}^\top (Ax^k-b)\rVert_2^2} A^\top S_k S_{k}^\top (Ax^k-b).
	\end{equation}
	This is indeed the randomized iterative algorithmic framework with stochastic Polyak step-sizes for solving linear systems \cite{zeng2024adaptive,xie2024randomized,necoara2019faster}. A typical case of the iteration scheme \eqref{ell1-iteration} is the RK method \cite{strohmer2009randomized}. Suppose the sampling spaces are \( \Omega = \left\{e_i\right\}_{i=1}^m \), where \(e_i\) is sampled with probability \(\frac{\|A_{i, :}\|^2_2}{\|A\|^2_F}\) at the \(k\)-th iteration. We then obtain the following iteration scheme:
	\[
	x^{k+1} = x^k - \frac{A_{i_k, :}x^k-b_{i_k}}{\|A_{i_k, :}\|^2_2} A_{i_k, :}^\top,
	\]
	which is the RK method. For further discussion on the general applicability of \eqref{ell1-iteration}, see \cite{zeng2024adaptive,xie2024randomized}.
\end{remark}
   
Finally, we note that Algorithm 1 utilizes a fixed probability space \((\Omega, \mathcal{F}, \mathbf{P})\). Alternatively, a class of probability spaces \(\{(\Omega_k, \mathcal{F}_k, \mathbf{P}_k)\}_{k \geq 0}\) can be adopted to generate the iterative sketching matrix \(S_k\) at each iteration \cite{zeng2024adaptive}. This approach can enable the design of more versatile hybrid algorithms, offering enhanced performance, faster convergence, and improved scalability.

      \subsection{Convergence analysis}
      
      	We first introduce some auxiliary variables.
       We define 
      $$
      V_0:=\emptyset \ \ \text{and} \ \
      V_{k}:=\left(x^{j_{k}}-x^k,\cdots,x^{k-1}-x^k\right)\in\mathbb{R}^{k-j_{k}} \ \ \text{if} \ \ k\geq1,
      $$ 
       From Proposition \ref{prob-full}, we know that $V_k^\top V_k(k\geq1)$ and $M_k^\top M_k$ are positive definite matrices. 
     We split the unique solution $s_k$ of the linear system \eqref{eq-s_k} into the vector $\overline{s_k}\in\mathbb{R}^{k-j_k}$ of the first components and the last component $\underline{s_k}\in\mathbb{R}$. 
     Let $$d_k:=-A^\top S_k S_{k}^\top (Ax^k-b),$$ utilizing Cramer's rule, we can express $\underline{s_k}$ as
      \begin{equation*}
      	\begin{aligned}
      		\underline{s_k}=\frac{\text{det}\begin{pmatrix}
      				V_k^\top V_k&0\\
      				d_k^\top V_k&\gamma_k
      		\end{pmatrix}}{\text{det}(M_k^\top M_k)}=\gamma_k\frac{\text{det}(V_k^\top V_k)}{\text{det}(M_k^\top M_k)},
      	\end{aligned}
      \end{equation*}
      where $\gamma_k=\lVert S_{k}^\top (Ax^k-b)\rVert_2^2$.
      Applying the Schur complement formula, we have
      \begin{equation*}
      	\begin{aligned}
      		\text{det}(M_k^\top M_k)&=\text{det}\begin{pmatrix}
      			V_k^\top V_k& V_k^\top d_k\\
      			d_k^\top V_k&\lVert d_k\rVert_2^2
      		\end{pmatrix}=\text{det}(V_k^\top V_k)(\lVert d_k\rVert_2^2-d_k^\top V_k(V_k^\top V_k)^{-1}V_k^\top d_k).\\
      	\end{aligned}
      \end{equation*}
      Thus, we obtain 
      \begin{equation}
      	\label{def-sk}
      	\underline{s_k}=\frac{\gamma_k}{\lVert d_k\rVert_2^2-d_k^\top V_k(V_k^\top V_k)^{-1} V_k^\top d_k}.
      \end{equation} 
      We define 
    \begin{equation}\label{eq-q_k}
     \mathcal{Q}_k:=\{S\in\Omega:S^\top(Ax^k-b)\neq0\} \ \
    \text{and} \ \	\begin{aligned}
    		q_k:=\inf_{S_{k}\in\mathcal{Q}_k}\left\{\left(1-\frac{\lVert V_kV_k^\dagger d_k\rVert_2^2}{\lVert d_k\rVert_2^2}\right)^{-1}\right\}.
    	\end{aligned}
    \end{equation}
     Given that $M_k$ has full column rank, we know  $d_k\notin\text{Range}(V_k)$. Since $V_kV_k^\dagger$ is the
     orthogonal projection onto $\text{Range}(V_k)$, we can get
    $$
    0\leq\frac{\lVert V_kV_k^\dagger d_k\rVert_2^2}{\lVert d_k\rVert_2^2}<1,
    $$ 
    which implies that $q_k\geq 1$. We note that for certain probability space $(\Omega, \mathcal{F}, \mathbf{P})$, the parameter $q_k$  can  be strictly greater than $1$ (see Remark \ref{remark-strictly}).
    The convergence result for Algorithm \ref{Algo-1} is as follows.
    \begin{theorem}\label{Thm-Convergence-SCRIM-AS}
    	Suppose that the linear system $Ax=b$ is consistent and the probability space $(\Omega,\mathcal{F},\mathbf{P})$ satisfies Assumption \ref{Assumption1}.  Let $H=\mathbb{E}\left[\frac{SS^\top}{\|S^\top A\|^2_2}\right]$ and  $\{x^k\}_{k\geq0}$ be the iteration sequence generated by Algorithm {\rm\ref{Algo-1}}. Then
    	\begin{equation*}
    		\begin{aligned}
    			\mathop{\mathbb{E}}\limits_{S_{k}\in\Omega}[\lVert x^{k+1}-A^\dagger b\rVert_2^2 \mid x^k]\leq(1-q_k\sigma_{\min}^2(H^{\frac{1}{2}}A))\lVert x^k-A^\dagger b\rVert_2^2,
    		\end{aligned}
    	\end{equation*}
    	where $q_k$ are given by \eqref{eq-q_k}.
    \end{theorem}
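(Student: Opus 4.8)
The plan is to exploit the defining property of the update: by construction $x^{k+1}$ is the orthogonal projection of $A^\dagger b$ onto the affine subspace $\Pi_k = x^k + \text{Range}(M_k)$. Since $x^k \in \Pi_k$, the increment $x^{k+1}-x^k = M_k s_k$ lies in $\text{Range}(M_k)$, while the error $A^\dagger b - x^{k+1}$ is orthogonal to $\text{Range}(M_k)$; the Pythagorean identity then gives
\begin{equation*}
\|x^{k+1}-A^\dagger b\|_2^2 = \|x^k-A^\dagger b\|_2^2 - \|M_k s_k\|_2^2.
\end{equation*}
Thus the whole argument reduces to a sharp lower bound on the expected squared step length $\mathbb{E}[\|M_k s_k\|_2^2 \mid x^k]$.

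Next I would evaluate this step length. From $M_k^\top M_k s_k = \gamma_k e_{k-j_k+1}^{k-j_k+1}$ we get $\|M_k s_k\|_2^2 = s_k^\top M_k^\top M_k s_k = \gamma_k \underline{s_k}$, so substituting the closed form \eqref{def-sk} and writing $V_k(V_k^\top V_k)^{-1}V_k^\top = V_kV_k^\dagger$ yields
\begin{equation*}
\|M_k s_k\|_2^2 = \frac{\gamma_k^2}{\|d_k\|_2^2 - \|V_kV_k^\dagger d_k\|_2^2} = \frac{\gamma_k^2}{\|d_k\|_2^2}\left(1 - \frac{\|V_kV_k^\dagger d_k\|_2^2}{\|d_k\|_2^2}\right)^{-1}.
\end{equation*}
Because the realized sketch satisfies $S_k^\top(Ax^k-b)\neq 0$, i.e.\ $S_k\in\mathcal{Q}_k$, the definition of $q_k$ in \eqref{eq-q_k} as an infimum over $\mathcal{Q}_k$ gives the pointwise bound $\|M_k s_k\|_2^2 \geq q_k\,\gamma_k^2/\|d_k\|_2^2$, which survives taking expectations.

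It then remains to lower bound $\mathbb{E}[\gamma_k^2/\|d_k\|_2^2 \mid x^k]$ in terms of $H$. Writing $r^k := Ax^k-b$ and applying the spectral-norm Cauchy--Schwarz inequality $\|d_k\|_2 = \|A^\top S_k S_k^\top r^k\|_2 \leq \|S_k^\top A\|_2\,\|S_k^\top r^k\|_2$ together with $\gamma_k = \|S_k^\top r^k\|_2^2$ gives $\|d_k\|_2^2 \leq \|S_k^\top A\|_2^2\,\gamma_k$, hence
\begin{equation*}
\frac{\gamma_k^2}{\|d_k\|_2^2} \geq \frac{\|S_k^\top r^k\|_2^2}{\|S_k^\top A\|_2^2} = (r^k)^\top \frac{S_k S_k^\top}{\|S_k^\top A\|_2^2} r^k.
\end{equation*}
Taking expectation reproduces exactly $H = \mathbb{E}[SS^\top/\|S^\top A\|_2^2]$, so $\mathbb{E}[\gamma_k^2/\|d_k\|_2^2 \mid x^k] \geq (r^k)^\top H r^k$. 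Finally, consistency gives $AA^\dagger b = b$, hence $r^k = A(x^k-A^\dagger b)$, and the iterates satisfy $x^k-A^\dagger b \in \text{Range}(A^\top)$ by induction (since $x^0\in\text{Range}(A^\top)$ and every update direction, namely $d_k$ and the differences forming $V_k$, lies in $\text{Range}(A^\top)$). As $H^{1/2}$ is invertible, $\text{Range}((H^{1/2}A)^\top) = \text{Range}(A^\top)$, so the minimal-nonzero-singular-value bound yields $(r^k)^\top H r^k = \|H^{1/2}A(x^k-A^\dagger b)\|_2^2 \geq \sigma_{\min}^2(H^{1/2}A)\|x^k-A^\dagger b\|_2^2$. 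Chaining the three displays through the Pythagorean identity gives the claim.

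The step I expect to be the main obstacle is the middle transition: relating the purely geometric step length $\gamma_k^2/\|d_k\|_2^2$, which carries the awkward factor $\|A^\top S_k S_k^\top r^k\|_2^2$, to the matrix $H$. The spectral-norm Cauchy--Schwarz inequality is precisely what decouples $A$ from the sketched residual and makes $H$ emerge. A secondary technical point is the rejection step: the conditional law of $S_k$ is supported on $\mathcal{Q}_k$, but since both $\gamma_k^2/\|d_k\|_2^2$ and $(r^k)^\top S_kS_k^\top r^k/\|S_k^\top A\|_2^2$ vanish on $\mathcal{Q}_k^c$, conditioning on $\mathcal{Q}_k$ merely rescales the expectation by $1/\mathbf{P}(\mathcal{Q}_k)\geq 1$ and thus preserves the inequality $\mathbb{E}[\gamma_k^2/\|d_k\|_2^2 \mid x^k] \geq (r^k)^\top H r^k$; I would flag this explicitly to keep the measure-theoretic bookkeeping clean.
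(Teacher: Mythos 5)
Your proposal is correct and follows essentially the same route as the paper's own proof: the orthogonal-projection (Pythagorean) identity reducing everything to $\gamma_k\underline{s_k}$, the closed form $\gamma_k\underline{s_k}=\frac{\gamma_k^2}{\lVert d_k\rVert_2^2}\bigl(1-\lVert V_kV_k^\dagger d_k\rVert_2^2/\lVert d_k\rVert_2^2\bigr)^{-1}$ bounded below via $q_k$, the Cauchy--Schwarz step $\lVert A^\top S_kS_k^\top r^k\rVert_2\leq\lVert S_k^\top A\rVert_2\lVert S_k^\top r^k\rVert_2$ to make $H$ appear, the $\mathcal{Q}_k$-versus-$\Omega$ bookkeeping, and the final $\sigma_{\min}^2(H^{1/2}A)$ bound using $x^k-A^\dagger b\in\operatorname{Range}(A^\top)$. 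No gaps to report.
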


    \begin{proof}
    	By the iteration scheme  of $x^{k+1}$, we have
    	\begin{equation*}
    		\begin{aligned}
    			\lVert x^{k+1}-A^\dagger b\rVert_2^2&=\lVert x^k+M_ks_k-A^\dagger b\rVert_2^2\\
    			&=\lVert x^k-A^\dagger b\rVert_2^2+2s_k^\top M_k^\top (x^k-A^\dagger b)+s_k^\top M_k^\top M_ks_k\\
    			&=\lVert x^k-A^\dagger b\rVert_2^2-s_k^\top M_k^\top M_ks_k
    			\\
    			&=\lVert x^k-A^\dagger b\rVert_2^2-\gamma_k\underline{s_k},
    		\end{aligned}
    	\end{equation*}
    	where the third equality follows from \eqref{xie-5-26-1} and the last equality follows from \eqref{eq-s_k},  $\gamma_k=\lVert S_{k}^\top (Ax^k-b)\rVert_2^2$, and $\underline{s_k}$ is defined as \eqref{def-sk}. For convenience, we use $\mathbb{E}[\hspace{2pt}\cdot\hspace{2pt}]$ to denote $\mathbb{E}[\hspace{2pt}\cdot\hspace{2pt}|x^k]$. Taking the condition expectation, we have
    	\begin{equation}\label{proof-xie-03-25-1}
    		\begin{aligned}
    			\mathop{\mathbb{E}}\limits_{S_{k}\in\Omega}[\lVert x^{k+1}-A^\dagger b\rVert_2^2]&=\mathop{\mathbb{E}}\limits_{S_{k}\in\mathcal{Q}_k}[\lVert x^{k+1}-A^\dagger b\rVert_2^2]\\
    			&=\lVert x^k-A^\dagger b\rVert_2^2-\mathop{\mathbb{E}}\limits_{S_{k}\in\mathcal{Q}_k}\left[\frac{\gamma_k^2}{\lVert d_k\rVert_2^2-d_k^\top V_k(V_k^\top V_k)^{-1} V_k^\top d_k}\right]\\
    			&=\lVert x^k-A^\dagger b\rVert_2^2-\mathop{\mathbb{E}}\limits_{S_{k}\in\mathcal{Q}_k}\left[\frac{\gamma_k^2}{\lVert d_k\rVert_2^2}\left(1-\frac{\lVert V_kV_k^\dagger d_k\rVert_2^2}{\lVert d_k\rVert_2^2}\right)^{-1}\right]\\
    			&=\lVert x^k-A^\dagger b\rVert_2^2-\mathop{\mathbb{E}}\limits_{S_{k}\in\mathcal{Q}_k}\left[\frac{\lVert S_{k}^\top (Ax^k-b)\rVert_2^4}{\lVert A^\top S_{k} S_{k}^\top (Ax^k-b)\rVert_2^2}(1-\cos^2 \theta_k)^{-1}\right]\\
    			&\leq\lVert x^k-A^\dagger b\rVert_2^2-\mathop{\mathbb{E}}\limits_{S_{k}\in\mathcal{Q}_k}\left[\frac{\lVert S_{k}^\top (Ax^k-b)\rVert_2^2}{\lVert A^\top S_{k}\rVert_2^2}(1-\cos^2 \theta_k)^{-1}\right]\\
    			&\leq\lVert x^k-A^\dagger b\rVert_2^2-q_k\mathop{\mathbb{E}}\limits_{S_{k}\in\mathcal{Q}_k}\left[\frac{\lVert S_{k}^\top (Ax^k-b)\rVert_2^2}{\lVert A^\top S_{k}\rVert_2^2}\right]\\
    			&\leq\lVert x^k-A^\dagger b\rVert_2^2-q_k\mathbb{P}(S_k\in\mathcal{Q}_k)\mathop{\mathbb{E}}\limits_{S_{k}\in\mathcal{Q}_k}\left[\frac{\lVert S_{k}^\top (Ax^k-b)\rVert_2^2}{\lVert A^\top S_{k}\rVert_2^2}\right].
    		\end{aligned}
    	\end{equation}
    	Since
    	\begin{equation*}
    		\begin{aligned}
    			\mathop{\mathbb{E}}\limits_{S_{k}\in\Omega}\left[\frac{\lVert S_{k}^\top (Ax^k-b)\rVert_2^2}{\lVert S_{k}^\top A\rVert_2^2}\right]=&\mathbb{P}(S_k\in\mathcal{Q}_k)\mathop{\mathbb{E}}\limits_{S_{k}\in\mathcal{Q}_k}\left[\frac{\lVert S_{k}^\top (Ax^k-b)\rVert_2^2}{\lVert S_{k}^\top A\rVert_2^2}\right]\\
    			&+\mathbb{P}(S_k\in\mathcal{Q}_k^c)\mathop{\mathbb{E}}\limits_{S_{k}\in\mathcal{Q}_k^c}\left[\frac{\lVert S_{k}^\top (Ax^k-b)\rVert_2^2}{\lVert S_{k}^\top A\rVert_2^2}\right],
    		\end{aligned}
    	\end{equation*}
    	and 
    	$$\mathop{\mathbb{E}}\limits_{S_{k}\in\mathcal{Q}_k^c}\left[\frac{\lVert S_{k}^\top (Ax^k-b)\rVert_2^2}{\lVert S_{k}^\top A\rVert_2^2}\right]=0$$
    	as $S_{k}^\top (Ax^k-b)=0$ for $S_k\in\mathcal{Q}_k^c$.
    	We know that
    	$$
    	\mathbb{P}(S_k\in\mathcal{Q}_k)\mathop{\mathbb{E}}\limits_{S_{k}\in\mathcal{Q}_k}\left[\frac{\lVert S_{k}^\top (Ax^k-b)\rVert_2^2}{\lVert A^\top S_{k}\rVert_2^2}\right]=\mathop{\mathbb{E}}\limits_{S_{k}\in\Omega}\left[\frac{\lVert S_{k}^\top (Ax^k-b)\rVert_2^2}{\lVert A^\top S_{k}\rVert_2^2}\right].
    	$$
    	Substituting it into \eqref{proof-xie-03-25-1}, we can get
    	\[
    	\begin{aligned}
    		\mathop{\mathbb{E}}\limits_{S_{k}\in\Omega}[\lVert x^{k+1}-A^\dagger b\rVert_2^2]&=\lVert x^k-A^\dagger b\rVert_2^2-q_k\mathop{\mathbb{E}}\limits_{S_{k}\in\Omega}\left[\frac{\lVert S_{k}^\top (Ax^k-b)\rVert_2^2}{\lVert A^\top S_{k}\rVert_2^2}\right]\\
    		&\leq(1-q_k\sigma_{\min}^2(H^{\frac{1}{2}}A))\lVert x^k-A^\dagger b\rVert_2^2,
    	\end{aligned}
    	\]
    	where the last inequality follows because \(x^k - A^{\dag}b \in \text{Range}(A^\top)\) and Lemma \ref{positive} implies that \(H\) is positive definite. This completes the proof of the theorem.
    \end{proof}

  Let us provide some remarks on the convergence results. The first result indicates that we can select any \(x^0 \in \mathbb{R}^n\) as the initial vector.
  
  \begin{remark}
  	One can choose any \(x^0 \in \mathbb{R}^n\) as the initial vector. In this case, it can be shown that
  	\[
  	\mathop{\mathbb{E}}_{S_{k} \in \Omega}[\lVert x^{k+1} - x^0_{*} \rVert_2^2 \mid x^k] \leq (1 - q_k \sigma_{\min}^2(H^{\frac{1}{2}}A)) \lVert x^k - x^0_{*} \rVert_2^2,
  	\]
  	where \(x^0_* := A^\dagger b + (I - A^\dagger A)x^0\). Here, \(x^0_*\) is actually the orthogonal projection of \(x^0\) onto the set \(\{ x \in \mathbb{R}^n : Ax = b\}\). 
  We refer the reader to \cite{han2024randomized, han2022pseudoinverse} for more details.
  \end{remark}
    
   The following remarks demonstrate that incorporating more previous iterations can enhance the performance of Algorithm \ref{Algo-1}.
   
    \begin{remark}
    	Consider the case where \(\tilde{\ell}\) previous iteration points are used, with \(\tilde{\ell} < \ell\). Define
    	\[
    	\tilde{V}_k = \begin{pmatrix}
    		x^{\tilde{j}_k} - x^k, \cdots, x^{k-1} - x^k
    	\end{pmatrix},
    	\]
    	where \(\tilde{j}_{k} = \max\{k - \tilde{\ell} + 1, 0\}\). Since \(\tilde{\ell} < \ell\), it follows that \(\operatorname{Range}(\tilde{V}_k) \subseteq \operatorname{Range}(V_k)\). This implies
    	\[
    	\frac{\lVert \tilde{V}_k \tilde{V}_k^\dagger d_k \rVert_2^2}{\lVert d_k \rVert_2^2} \leq \frac{\lVert V_k V_k^\dagger d_k \rVert_2^2}{\lVert d_k \rVert_2^2}.
    	\]
    	Therefore,
    	\[
    	\tilde{q}_k := \inf_{S_{k} \in \mathcal{Q}_k} \left\{ \left(1 - \frac{\lVert \tilde{V}_k \tilde{V}_k^\dagger d_k \rVert_2^2}{\lVert d_k \rVert_2^2}\right)^{-1} \right\} \leq q_k,
    	\]
    	which implies
    	\[
    	1 - \tilde{q}_k \sigma_{\min}^2(M^{\frac{1}{2}}A) \geq 1 - q_k \sigma_{\min}^2(M^{\frac{1}{2}}A).
    	\]
    	Hence, using more previous iterations can enhance the performance of Algorithm \ref{Algo-1}.
    	 \end{remark}
    	
    	\begin{remark}
    When \(\ell \geq r:= \operatorname{rank}(A)\), Algorithm \ref{Algo-1} can exhibit finite-time convergence. Indeed, we know that now
    \[ 
    M_{r-1}=\left(x^{0}-x^{r-1},\cdots,x^{r-2}-x^{r-1},-A^\top S_{r-1} S^\top_{r-1} (Ax^{r-1} - b)\right).
    \]
    For any \(i = 0, \ldots, r-2\), we have \(x^{i} - x^{r-1} \in \operatorname{Range}(A^\top)\), which, together with \(A^\top S_{r-1} S^\top_{r-1} (Ax^{r-1} - b) \in \operatorname{Range}(A^\top)\), implies that \(\operatorname{span}(M_{r-1})\subseteq \operatorname{Range}(A^\top)\). Since \(M_{r-1}\) has full column rank, we know that
    \[
    \operatorname{Range}(M_{r-1}) = \operatorname{Range}(A^\top).
    \]
    Because \(A^\dagger b \in \operatorname{Range}(A^\top)\), it follows that \(A^\dagger b \in \operatorname{Range}(M_{r-1})\). Therefore, by our design, the next iterate satisfies \(x^{r} = x^{r-1} + M_{r-1} s_{r-1} = A^\dagger b\). This implies that Algorithm \ref{Algo-1} terminates after exactly \(r\) iterations in the absence of round-off error.
    \end{remark}
    
     Finally, we note that while our method can theoretically terminate in a finite number of steps, it is more practical to use a fixed-memory strategy by setting \(\ell\) to a constant value. This approach aligns with the philosophy of limited-memory methods, optimizing the trade-off between memory efficiency, computational speed, and numerical accuracy \cite{nocedal1999numerical}.

    \subsection{Efficient implementation}
    \label{subsection-eu}

    In this subsection, we introduce an efficient update strategy for solving the linear system \eqref{eq-s_k} and subsequently present an efficient implementation of Algorithm \ref{Algo-1}.
   Given that 
   \[
   M^\top_kM_k = \begin{pmatrix}
   	V_k^\top V_k & V_k^\top d_k \\
   	d_k^\top V_k & \lVert d_k \rVert_2^2
   \end{pmatrix},
   \]
   where \(d_k = -A^\top S_k S^\top_k(Ax^k-b)\), the unique solution to the linear system \eqref{eq-s_k}, according to Lemma \ref{lemma-inv-sol}, can be expressed as
   \begin{equation}\label{definitionsk}
   s_k = \begin{pmatrix}
   	\overline{s_k} \\
   	\underline{s_k}
   \end{pmatrix} = \frac{\gamma_k}{d_k^\top V_k (V_k^\top V_k)^{-1} V_k^\top d_k - \lVert d_k \rVert_2^2} \begin{pmatrix}
   	(V_k^\top V_k)^{-1} V_k^\top d_k \\
   	-1
   \end{pmatrix}.
   \end{equation}
  This implies that we need only consider the inverse of the matrix \(V_k^\top V_k\), rather than \(M_k^\top M_k\). The following lemma provides a simple formula for the inverse of \(V_k^\top V_k\). We note that the proof of this result, as well as the proofs of other results with lengthy derivations, are all moved to Appendix \ref{sec:appd}.  For convenience, we define the matrix \(C(\alpha_1, \ldots, \alpha_n)\) for any \(\alpha_1, \ldots, \alpha_n \in \mathbb{R}\) with $\alpha_i\neq 0$ for all $i\in[n]$ as follows
  	{\small
  	\begin{equation}\label{matrixC}
  		\begin{aligned}
  			C(\alpha_1,\ldots,\alpha_n):=
  			\begin{pmatrix}
  				\alpha_1^{-1}&-\alpha_1^{-1}&&&&\\
  				-\alpha_1^{-1}&\alpha_1^{-1}+\alpha_2^{-1}&-\alpha_2^{-1}&&&\\
  				&-\alpha_2^{-1}&\alpha_2^{-1}+\alpha_3^{-1}&\cdots&&\\
  				&&\ddots&\ddots&\ddots&\\
  				&&&\cdots&\alpha_{n-2}^{-1}+\alpha_{n-1}^{-1}&-\alpha_{n-1}^{-1}\\
  				&&&&-\alpha_{n-1}^{-1}&\alpha_{n-1}^{-1}+\alpha_n^{-1}\\
  			\end{pmatrix}
  		\end{aligned}
  	\end{equation}
  }

  \begin{lemma}\label{lemma-effup}
  Suppose that \(\{s_k\}_{k \geq 0}\) and \(\{\gamma_k\}_{k \geq 0}\) are the sequences generated by Algorithm \ref{Algo-1}. Then the inverse matrix of \(V_k^\top V_k\) can be expressed as
  \[
  C_k = C(\gamma_{j_k}\underline{s_{j_k}}, \ldots, \gamma_{k-1}\underline{s_{k-1}}) \in \mathbb{R}^{(k-j_k) \times (k-j_k)},
  \]
  where the matrix \(C(\gamma_{j_k}\underline{s_{j_k}}, \ldots, \gamma_{k-1}\underline{s_{k-1}})\) is defined by \eqref{matrixC}.
  \end{lemma}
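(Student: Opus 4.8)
The plan is to reduce the whole statement to one structural fact — the mutual orthogonality of the consecutive steps $w_i := x^{i+1}-x^i$ — and then read off the inverse from an explicit triangular factorization of $V_k^\top V_k$. First I would re-express $V_k$ through these steps. By telescoping, each column satisfies $x^{j_k+c-1}-x^k = -\sum_{l=j_k+c-1}^{k-1} w_l$, so that $V_k = -W_k T$, where $W_k := (w_{j_k},\ldots,w_{k-1})\in\mathbb{R}^{n\times(k-j_k)}$ collects the steps and $T$ is the lower-triangular all-ones matrix of order $k-j_k$. Consequently $V_k^\top V_k = T^\top (W_k^\top W_k) T$, and everything hinges on computing $W_k^\top W_k$.

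The crux, and the main obstacle, is showing that $\{w_{j_k},\ldots,w_{k-1}\}$ are mutually orthogonal, so that $W_k^\top W_k$ is diagonal. The tool is the projection optimality already encoded in \eqref{xie-5-26-1}: since $x^{i+1}=x^i+M_is_i$, one has $M_i^\top(x^{i+1}-A^\dagger b)=0$, i.e. $x^{i+1}-A^\dagger b \perp \operatorname{Range}(M_i)$. I would first record that, for each $i$, $\operatorname{span}\{w_l : l=j_i,\ldots,i\}\subseteq \operatorname{Range}(M_i)$; indeed the same telescoping gives $\operatorname{span}\{x^l-x^i : l=j_i,\ldots,i-1\}=\operatorname{span}\{w_l : l=j_i,\ldots,i-1\}$, while $w_i=M_is_i\in\operatorname{Range}(M_i)$. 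Hence $x^{i+1}-A^\dagger b \perp w_l$ for every $l\in\{j_i,\ldots,i\}$. Then, for $j_k\le j<i\le k-1$, I would write $w_i=(x^{i+1}-A^\dagger b)-(x^i-A^\dagger b)$ and pair with $w_j$: the first term vanishes by the displayed fact at index $i$ (since $j_i\le j_k\le j\le i-1$), and the second vanishes by the same fact at index $i-1$ (since $j_{i-1}\le j_k\le j\le i-1$), where I use that $j_l=\max\{l-\ell+1,0\}$ is nondecreasing in $l$ and $i,i-1\le k$. Thus $\langle w_i,w_j\rangle=0$ and $W_k^\top W_k = D:=\operatorname{diag}(\|w_{j_k}\|_2^2,\ldots,\|w_{k-1}\|_2^2)$.

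Next I would identify the diagonal entries. Using \eqref{eq-s_k}, $\|w_i\|_2^2 = s_i^\top M_i^\top M_i s_i = \gamma_i\,\underline{s_i}$, which is positive (so the matrix $C(\cdot)$ in \eqref{matrixC} is well defined); here positivity of $\underline{s_i}$ comes from \eqref{def-sk} together with the full-rank guarantee of Proposition \ref{prob-full}. Finally, since $V_k^\top V_k = T^\top D T$ with $T$ and $D$ invertible, its inverse is $T^{-1}D^{-1}T^{-\top}$. The inverse of the all-ones lower-triangular $T$ is the bidiagonal difference matrix (ones on the diagonal, $-1$ on the first subdiagonal), and a direct multiplication shows that $T^{-1}D^{-1}T^{-\top}$ is precisely the tridiagonal matrix with diagonal entries $D_{11}^{-1},\,D_{11}^{-1}+D_{22}^{-1},\,\ldots$ and subdiagonal entries $-D_{i-1,i-1}^{-1}$, i.e. exactly $C(\|w_{j_k}\|_2^2,\ldots,\|w_{k-1}\|_2^2)=C(\gamma_{j_k}\underline{s_{j_k}},\ldots,\gamma_{k-1}\underline{s_{k-1}})$, which is the claimed $C_k$. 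Only the orthogonality step requires care with the truncation window; the remaining factorization and multiplication are routine.
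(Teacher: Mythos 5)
Your proof is correct, but it follows a genuinely different route from the paper's. The paper proves, by induction on $k$ with two cases ($j_k=j_{k+1}=0$ and $j_k<j_{k+1}$) and an entry-by-entry computation of the inner products $\langle x^{j_k+i-1}-x^{k+1},x^{j_k+j-1}-x^{k+1}\rangle$ driven by the normal equation \eqref{eq-s_k}, that $V_k^\top V_k=\sum_{i=1}^{k-j_k}\gamma_{j_k+i-1}\underline{s_{j_k+i-1}}\mathbbm{1}_{k-j_k}^{i}(\mathbbm{1}_{k-j_k}^{i})^\top$, and then invokes the cited inversion formula of Lemma \ref{lemma-inv}. You instead factor $V_k=-W_kT$ through the consecutive steps $w_i=x^{i+1}-x^i$, prove their mutual orthogonality directly from the projection optimality $M_i^\top(x^{i+1}-A^\dagger b)=0$ (with correct bookkeeping of the truncation window, using monotonicity of $j_l$ so that $j_i\leq j_k\leq j$), identify $\|w_i\|_2^2=s_i^\top M_i^\top M_i s_i=\gamma_i\underline{s_i}$, and invert $T^\top DT$ explicitly. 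Since the $i$-th row of $T$ is $(\mathbbm{1}^i)^\top$, your identity $V_k^\top V_k=T^\top DT$ is exactly the paper's rank-one-sum representation, so the two arguments meet at the same Gram structure; but your derivation is non-inductive, makes the underlying geometric mechanism (orthogonality of the increments, which the paper only establishes later in Proposition \ref{prop-property-Krylov} for the equivalent Algorithm \ref{Algo-3}) explicit, and rederives the tridiagonal inversion formula rather than citing it. The trade-off is that the paper's induction is self-contained at the level of raw inner products and tracks precisely how the window slides from $V_k$ to $V_{k+1}$, which it reuses when passing from $\hat V_{k+1}$ to $V_{k+1}$; your factorization buys a shorter, more conceptual proof at the cost of one extra structural lemma (the step orthogonality), which you do prove correctly.
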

    
   From \eqref{definitionsk}, we know that
   \[
   \overline{s_k} = -\underline{s_k}(V_k^\top V_k)^{-1}V_k^\top d_k = -\underline{s_k}C_kV_k^\top d_k,
   \]
   where the last equality follows from Lemma \ref{lemma-effup}. Therefore, we have
   \begin{equation}\label{xk1-scheme}
   x^{k+1} = x^k + M_ks_k = x^k + V_k\overline{s_k} + \underline{s_k}d_k = x^k - \underline{s_k}V_kC_kV_k^\top d_k + \underline{s_k}d_k.
   \end{equation}
   Now, we have already presented an efficient implementation of Algorithm \ref{Algo-1}, as described in Algorithm \ref{Algo-2}.

    \begin{algorithm}[htpb]
    	\caption{Efficient RIM-AS}
    	\label{Algo-2}
    	\begin{algorithmic}
    		\Require $A\in\mathbb{R}^{m\times n},b\in\mathbb{R}^m$, probability space $(\Omega,\mathcal{F},\mathbf{P})$, positive integer $\ell$, initial point $x^0\in\text{Range}(A^\top)$, $V_0=C_0=\emptyset$, and set $k=0$.
    		\begin{enumerate}
    			\item [1:] Randomly select an iterative sketching matrix $S_{k}\in\Omega$ until $S_{k}^\top(Ax^k-b)\neq0$.
    			\item [2:] Update  $j_k=\max\{k-\ell+1,0\}$ and 
    			$$V_{k}=\left(x^{j_{k}}-x^k,\cdots,x^{k-1}-x^k\right)\in\mathbb{R}^{n\times (k-j_{k})}.$$
    			\item [3:] Construct the matrix $C_k=C(\gamma_{j_k}\underline{s_{j_k}},\cdots,\gamma_{k-1}\underline{s_{k-1}})\in\mathbb{R}^{{(k-j_k)}\times{(k-j_k)}}$ as defined in \eqref{matrixC}.
    			\item [4:] Compute 
    			$$\begin{cases}
    				\gamma_k=\lVert S_{k}^\top (Ax^k-b)\rVert_2^2,\\
    				d_k=-A^\top S_{k}S_{k}^\top(Ax^k-b),\\
    				c_k=\lVert d_k\rVert_2^2,\\
    				w_k=V_k^\top d_k,\\
    				h_k=C_kw_k,\\
    				\underline{s_k}=\frac{\gamma_k}{c_k-w_k^\top h_k}.
    			\end{cases}
    			$$
    			\item [5:] Update $x^{k+1}=x^k-\underline{s_k}(V_kh_k-d_k)$.
    			\item [6:] If the stopping rule is satisfied, stop and go to output. Otherwise, set $k=k+1$ and return to Step 1.
    		\end{enumerate}
    		\Ensure
    		The approximate solution $x^k$.
    	\end{algorithmic}
    \end{algorithm}
    
Finally, we provide a detailed discussion of the computational complexity of Algorithm \ref{Algo-2}. For convenience, we assume that the randomized sketching matrix $S\in\mathbb{R}^{m\times q}$, and we use $\mathcal{W}(S_k^\top A)$ and $\mathcal{W}(S_k^\top b)$ to denote the workload of computing $S_k^\top A$ and $S_k^\top b$, respectively. The computational complexity of the $k$-th iteration of Algorithm \ref{Algo-2} consists of the following components:
\rm(\romannumeral1) Updating $V_k$ requires $(k-j_k)n$ flops, assuming the vectors $x^{j_k},\ldots,x^{k-1}$ are stored;
  		(\romannumeral2) Computing $\gamma_k$ requires $\mathcal{W}(S_k^\top A)+\mathcal{W}(S_k^\top b)+2qn+2q-1$ flops, where the computation of $S_k^\top A$ and $S_k^\top b$ are performed first;
  		 (\romannumeral3) Computing $d_k$ requires $(2q-1)n$ flops, based on the precomputed residual $S_k^\top(b-Ax^k)$ from step (\romannumeral2);
		 (\romannumeral4) Computing $c_k,w_k,h_k$ and $\underline{s_k}$ requires a total of $2(k-j_k+1)n+2(k-j_k)^2$ flops;
  		 (\romannumeral5) Updating $x^{k+1}$ requires $2(k-j_k+1)n$ flops.
  	Hence, the total computational complexity of  Algorithm \ref{Algo-2} is
\begin{equation}
\label{cc-alg2}
\mathcal{W}(S_k^\top A)+\mathcal{W}(S_k^\top b)+(4q+5(k-j_k)+3)n+2q+2(k-j_k)^2-1.
\end{equation}
In particular, when $j_k=0$, i.e. $k\leq \ell-1$, the algorithm has the following computational complexity
$$
\mathcal{W}(S_k^\top A)+\mathcal{W}(S_k^\top b)+(4q+5k+3)n+2q+2k^2-1.
$$
When $j_k=k-\ell+1>0$, the algorithm has the following computational complexity
$$
\mathcal{W}(S_k^\top A)+\mathcal{W}(S_k^\top b)+(4q+5\ell-2)n+2q+2(\ell-1)^2-1.
$$


    \section{Iterative-sketching-based Krylov subspace methods}\label{sec-Krylov}
    
    In this section, we first investigate alternative representations of the affine subspace \(\Pi_k\). By analyzing these forms, we establish that the Krylov subspace is actually a  special case of \(\Pi_k\). Building on this insight, we then propose a new framework for iterative-sketching-based Krylov (IS-Krylov) subspace methods to solve linear systems, which allows for the derivation of various IS-Krylov subspace methods by selecting different probability spaces \((\Omega, \mathcal{F}, \mathbf{P})\).
    We note that all proofs in this section are collected in Appendix \ref{sec:appd}.
    
    \subsection{Analysis of the affine subspace}
    In this subsection, we establish two lemmas that are useful for developing our IS-Krylov subspace methods. 
    The first lemma provides equivalent  forms of \(\Pi_k\). 

    \begin{lemma}\label{Thm-Pi_k-Krylov}
    Let  $\{x^k\}_{k\geq0}$ be the iteration sequence generated by Algorithm \ref{Algo-1}, and define $r^k := Ax^k - b$. Then the affine subspaces
    $$
    \Pi_k = \text{\rm aff}\{x^{j_k},x^{j_k+1},\cdots,x^k,x^k-A^\top S_k S_k^\top r^k\}
    $$
    and
    $$
    \Theta_k := \text{\rm aff}\{x^{j_k},x^{j_k+1},\cdots,x^k,x^{k+1}\}
    $$
    are identical. Moreover, when $\ell=\infty$, both affine subspaces coincide with the affine subspace
    \[
    \Lambda_k :=x^0+ \operatorname{span}\{A^\top S_0S_0^\top r^0,\cdots,A^\top S_kS_k^\top r^k\}.
    \]
    \end{lemma}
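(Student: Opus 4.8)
The plan is to prove the two assertions in turn, first the identity $\Pi_k=\Theta_k$ for arbitrary $\ell$, and then the further collapse to $\Lambda_k$ when $\ell=\infty$. The single fact that drives everything is that the last coordinate $\underline{s_k}$ of the solution of \eqref{eq-s_k} never vanishes. Indeed, from \eqref{def-sk} we have $\underline{s_k}=\gamma_k/\bigl(\|d_k\|_2^2-d_k^\top V_k(V_k^\top V_k)^{-1}V_k^\top d_k\bigr)$, where the numerator $\gamma_k=\|S_k^\top r^k\|_2^2>0$ because Step~1 of Algorithm~\ref{Algo-1} enforces $S_k^\top r^k\neq 0$, and the denominator is the Schur complement of the block $V_k^\top V_k$ in $M_k^\top M_k$, hence strictly positive since $M_k^\top M_k$ is positive definite by Proposition~\ref{prob-full}. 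Thus $\underline{s_k}\neq 0$, and I would record this at the outset for reuse.

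For the identity $\Pi_k=\Theta_k$ I would argue by double inclusion. Because $x^{k+1}=x^k+M_ks_k$ lies in the affine subspace $\Pi_k=x^k+\operatorname{Range}(M_k)$, and the points $x^{j_k},\dots,x^k$ plainly lie in $\Pi_k$, their affine hull $\Theta_k$ is contained in $\Pi_k$. For the reverse inclusion, observe that $\Pi_k$ and $\Theta_k$ share the generators $x^{j_k}-x^k,\dots,x^{k-1}-x^k$ (relative to the base point $x^k$), so it suffices to place $d_k$ in $\operatorname{span}\{x^{j_k}-x^k,\dots,x^{k-1}-x^k,x^{k+1}-x^k\}$. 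Writing $x^{k+1}-x^k=M_ks_k=V_k\overline{s_k}+\underline{s_k}\,d_k$ and using $\underline{s_k}\neq 0$, I solve $d_k=\underline{s_k}^{-1}\bigl((x^{k+1}-x^k)-V_k\overline{s_k}\bigr)$, which exhibits $d_k$ as the required combination. This gives $\Pi_k\subseteq\Theta_k$, hence equality.

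For $\ell=\infty$ we have $j_k=0$ for every $k$, so by the first part $\Pi_k=\Theta_k=x^0+\operatorname{span}\{x^1-x^0,\dots,x^{k+1}-x^0\}$, and it remains to identify this span with $\operatorname{span}\{d_0,\dots,d_k\}$, which equals $\operatorname{span}\{A^\top S_iS_i^\top r^i\}_{i=0}^{k}$ since $d_i=-A^\top S_iS_i^\top r^i$. I would prove $\operatorname{span}\{x^1-x^0,\dots,x^{j+1}-x^0\}=\operatorname{span}\{d_0,\dots,d_j\}$ by induction on $j$. The base case is immediate from $x^1-x^0=\underline{s_0}\,d_0$ with $\underline{s_0}\neq 0$. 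For the inductive step, expanding $x^{j+1}-x^j=M_js_j$ and rewriting each $x^i-x^j$ as $(x^i-x^0)-(x^j-x^0)$ shows $x^{j+1}-x^0\in\operatorname{span}\{x^1-x^0,\dots,x^j-x^0,d_j\}$, which the induction hypothesis identifies with $\operatorname{span}\{d_0,\dots,d_j\}$; solving the same relation for $d_j$ (again via $\underline{s_j}\neq 0$) yields the reverse containment. Passing to $j=k$ gives $\Theta_k=\Lambda_k$, and combined with $\Pi_k=\Theta_k$ all three coincide.

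The main obstacle I anticipate is bookkeeping rather than anything conceptual: in the inductive step the columns of $M_j$ run over $x^0-x^j,\dots,x^{j-1}-x^j$ together with $d_j$, and one must re-express every difference $x^i-x^j$ consistently against the common base point $x^0$ so that the spans line up correctly across indices. Everything substantive reduces to the single nonvanishing fact $\underline{s_k}\neq 0$, so once that is stated cleanly it can be invoked verbatim in the reverse inclusion of part one and in both containments of the induction.
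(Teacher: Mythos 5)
Your proposal is correct and follows essentially the same route as the paper: both arguments rest on the relation $x^{k+1}-x^k=V_k\overline{s_k}+\underline{s_k}\,d_k$ from \eqref{xk1-scheme}, prove $\Pi_k=\Theta_k$ by double inclusion with $d_k$ recovered by dividing by $\underline{s_k}$, and handle the $\ell=\infty$ case by an induction that trades the differences $x^{i+1}-x^0$ against the vectors $A^\top S_iS_i^\top r^i$. Your explicit up-front verification that $\underline{s_k}\neq 0$ (via positivity of the Schur complement of $V_k^\top V_k$ in $M_k^\top M_k$) is a small but welcome tightening of a step the paper leaves implicit.
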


    Recall that, given a matrix $B\in\mathbb{R}^{n\times n}$ and a vector $r\in\mathbb{R}^n$, the Krylov subspace  of order $k$ is defined as \cite[Section 10.1.1]{golub2013matrix} 
   \[
   \mathcal{K}_k(B, r) = \text{span}\{r, Br, B^2r, \ldots, B^{k-1}r\}.
   \] 
   The following lemma  demonstrates that \(x^0 + \mathcal{K}_{k+1}(A^\top A, A^\top r^0)\) is a specific instance of \(\Pi_k\).
    
    \begin{lemma}
    	\label{xie-equ-krylov}
    	Suppose that $\{x^k\}_{k\geq0}$ is the iteration sequence generated by Algorithm \ref{Algo-1} with setting $\ell=\infty$ and $\Omega=\{I\}$. Let $r^k = Ax^k - b$. Then
    the subspaces  $\text{span}\{A^\top r^0,\cdots,A^\top r^k\}$ and $\mathcal{K}_{k+1}(A^\top A,A^\top r^0)$ are identical.
    \end{lemma}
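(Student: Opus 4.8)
The plan is to argue by induction to get one inclusion and then close with a dimension count. First, since $\Omega=\{I\}$ forces $S_k=I$ at every step, the sketched normal vector collapses to $A^\top S_k S_k^\top r^k = A^\top r^k$, so both subspaces under comparison are genuinely built from the residual-normal vectors $A^\top r^0,\ldots,A^\top r^k$. The algebraic identity I would exploit throughout is
\[
A^\top r^{k+1} = A^\top r^0 + A^\top A\,(x^{k+1}-x^0),
\]
obtained by left-multiplying $r^{k+1}-r^0 = A(x^{k+1}-x^0)$ by $A^\top$. This expresses each new basis vector in terms of $A^\top r^0$ and the application of $A^\top A$ to the displacement $x^{k+1}-x^0$, which is exactly the structure needed to climb the Krylov sequence.

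For the inclusion $\operatorname{span}\{A^\top r^0,\ldots,A^\top r^k\}\subseteq \mathcal{K}_{k+1}(A^\top A,A^\top r^0)$ I would induct on $k$. The base case $k=0$ is immediate, as both sides equal $\operatorname{span}\{A^\top r^0\}$. For the inductive step, Lemma \ref{Thm-Pi_k-Krylov} with $\ell=\infty$ gives $x^{k+1}\in\Pi_k=\Lambda_k = x^0+\operatorname{span}\{A^\top r^0,\ldots,A^\top r^k\}$, so $x^{k+1}-x^0$ lies in $\operatorname{span}\{A^\top r^0,\ldots,A^\top r^k\}$, which by the inductive hypothesis sits inside $\mathcal{K}_{k+1}(A^\top A,A^\top r^0)$. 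Since applying $A^\top A$ maps $\mathcal{K}_{k+1}(A^\top A,A^\top r^0)$ into $\mathcal{K}_{k+2}(A^\top A,A^\top r^0)$, the displayed identity places $A^\top r^{k+1}\in A^\top r^0 + \mathcal{K}_{k+2}(A^\top A,A^\top r^0)\subseteq \mathcal{K}_{k+2}(A^\top A,A^\top r^0)$, completing the induction.

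The reverse inclusion is where I would lean on the full-rank structure rather than track leading coefficients directly. Because $\ell=\infty$ gives $j_k=0$ and $M_k\in\mathbb{R}^{n\times(k+1)}$, Proposition \ref{prob-full} guarantees that $M_k$ has full column rank $k+1$ as long as $r^i\neq 0$ for $i=0,\ldots,k$. Identifying the direction spaces of the equal affine subspaces $\Pi_k = x^k+\operatorname{Range}(M_k)$ and $\Lambda_k = x^0+\operatorname{span}\{A^\top r^0,\ldots,A^\top r^k\}$ (Lemma \ref{Thm-Pi_k-Krylov}) forces $\operatorname{span}\{A^\top r^0,\ldots,A^\top r^k\}=\operatorname{Range}(M_k)$, so this span has dimension exactly $k+1$. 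Since $\mathcal{K}_{k+1}(A^\top A,A^\top r^0)$ is spanned by $k+1$ vectors, its dimension is at most $k+1$; combined with the inclusion already established, the two subspaces have equal dimension and therefore coincide.

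The main obstacle is precisely this reverse inclusion: a direct proof would require verifying that $(A^\top A)^k A^\top r^0$ appears with a nonzero coefficient, i.e.\ tracking the leading coefficient of $x^{k+1}-x^0$ in the residual basis, which is delicate. The dimension argument circumvents this entirely, and the linear independence of $A^\top r^0,\ldots,A^\top r^k$ --- equivalently the full column rank of $M_k$ supplied by Proposition \ref{prob-full} --- is the key fact that makes the count go through.
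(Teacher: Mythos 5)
Your proof is correct, and while the forward inclusion $\operatorname{span}\{A^\top r^0,\ldots,A^\top r^k\}\subseteq\mathcal{K}_{k+1}(A^\top A,A^\top r^0)$ follows essentially the same inductive argument as the paper (same identity $A^\top r^{i+1}=A^\top r^0+A^\top A(x^{i+1}-x^0)$, same appeal to Lemma \ref{Thm-Pi_k-Krylov}), your treatment of the reverse inclusion is genuinely different. The paper proves it by a second induction: it writes $x^{i+1}-x^0=\sum_{j=0}^{i}\lambda_j^i(A^\top A)^jA^\top r^0$, shows the leading coefficient $\lambda_i^i$ is nonzero by deriving a contradiction with the linear independence of $x^1-x^0,\ldots,x^{i}-x^0$, and then explicitly inverts to express $(A^\top A)^kA^\top r^0$ in terms of $A^\top r^0,\ldots,A^\top r^k$. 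You instead observe that the equal affine subspaces $\Pi_k=x^k+\operatorname{Range}(M_k)$ and $\Lambda_k$ share a direction space, so $\operatorname{span}\{A^\top r^0,\ldots,A^\top r^k\}=\operatorname{Range}(M_k)$ has dimension exactly $k+1$ by Proposition \ref{prob-full}, while $\mathcal{K}_{k+1}(A^\top A,A^\top r^0)$ is spanned by $k+1$ vectors; equality then follows from the already-established inclusion. Both arguments ultimately rest on the same ingredient --- the full column rank of $M_k$, which is also what underlies the paper's linear-independence claim --- but your dimension count eliminates all coefficient bookkeeping and is shorter; the paper's version is more constructive, exhibiting the explicit change of basis between $\{(A^\top A)^jA^\top r^0\}$ and $\{A^\top r^j\}$, which is the kind of information one might want when connecting to the Lanczos/CGNE discussion later in Section 4. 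One small point worth making explicit in your write-up: the hypothesis $r^i\neq 0$ for $i=0,\ldots,k$ needed for Proposition \ref{prob-full} is automatic here, since Step 1 of Algorithm \ref{Algo-1} with $\Omega=\{I\}$ enforces $S_i^\top(Ax^i-b)=r^i\neq 0$ for every iterate that the algorithm actually produces.
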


    \subsection{The IS-Krylov subspace method }\label{section-ISK}
    
    From Lemmas \ref{Thm-Pi_k-Krylov} and \ref{xie-equ-krylov}, we can observe that when \(\ell=\infty\) and \(\Omega=\{I\}\), our RIM-AS framework \eqref{RIM_subspace} reduces to a Krylov subspace method. Specifically, the iteration scheme \eqref{RIM_subspace}   simplifies to
    \begin{equation}\label{RIM_subspace-eq}
    	\begin{aligned}
    		&x^{k+1} = \mathop{\arg\min}_{x \in \mathbb{R}^n} \lVert x - A^\dagger b \rVert_2^2 \\
    		 \text{subject to} &\ \ x \in \Pi_{k}=\Lambda_k= x^0+\mathcal{K}_{k+1}\left(A^\top A,A^\top \left(Ax^0-b\right)\right).
    	\end{aligned}
    \end{equation}
    This suggests that for general choices of \(\ell\) and probability spaces \((\Omega, \mathcal{F}, \mathbf{P})\), our RIM-AS approach \eqref{RIM_subspace} can be extended to construct a novel iterative-sketching-based Krylov (IS-Krylov) subspace method. 
    
    We present our IS-Krylov subspace method in Algorithm \ref{Algo-3}. This algorithm can be regarded as an \(\ell\)-truncated-type algorithm, where \(\ell\) is the truncated parameter. Moreover, the iteration scheme \eqref{iter-krylov} in Algorithm \ref{Algo-3} functions as a Gram-Schmidt orthogonalization process applied to the vector set
    $
    \left\{x^{j_k} - x^k, \cdots, x^{k-1} - x^k, -A^\top S_k S^\top_k (Ax^k - b)\right\},
    $
    as detailed in Remark \ref{remark-ob0329}. In fact, Algorithm \ref{Algo-3} can be interpreted as an another efficient update strategy variant of Algorithm \ref{Algo-1}. In this form, we can further investigate some properties of the algorithm and establish connections between it and the conjugate gradient method.

    

     \begin{algorithm}[htpb]
    	\caption{The iterative-sketching-based Krylov (IS-Krylov) subspace method}
    	\label{Algo-3}
    	\begin{algorithmic}
    		\Require $A\in\mathbb{R}^{m\times n},b\in\mathbb{R}^m$, probability space $(\Omega,\mathcal{F},\mathbf{P})$, positive integer $\ell$, initial point $x^0\in\text{Range}(A^\top)$, and set $k=0$.
    		\begin{enumerate}
    			\item[1:] Randomly select an iterative sketching matrix $S_0 \in \Omega$ until $S_0^\top (Ax^0-b)\neq 0$.
    			\item[2:] Set $p_0=d_0=- A^\top S_0 S_0^\top (Ax^0-b)$.
    			\item[3:] Set $\delta_k =\|S_k^\top (Ax^k-b)\|_2^2 / \|p_k\|_2^2$.
    			\item[4:] Update $x^{k+1}=x^k+\delta_k p_k$.
    			\item[5:] Randomly select an iterative sketching $S_{k+1} \in \Omega$ until $S_{k+1}^\top (Ax^{k+1}-b)\neq 0$.
    			\item [6:] Update  $j_{k+1}=\max\{k-\ell+2,0\}$ and compute
    			\begin{equation}\label{iter-krylov}
    				\left\{
    				\begin{array}{l}
    					d_{k+1}=-A^\top S_{k+1}S_{k+1}^\top (Ax^{k+1}-b),\\
    					\eta_{k+1}^i=\langle d_{k+1},p_i\rangle/\lVert p_i\rVert_2^2, \ i=j_{k+1},\ldots,k,\\
    					p_{k+1}=d_{k+1}-\sum_{i=j_{k+1}}^{k}\eta_{k+1}^ip_i.
    				\end{array}\right.
    			\end{equation}
    			\item [7:] If the stopping rule is satisfied, stop and go to output. Otherwise, set $k=k+1$ and return to Step 3.
    		\end{enumerate}
    		\Ensure
    		The approximate solution $x^k$.
    	\end{algorithmic}
    \end{algorithm}


    The following theorem demonstrates the equivalence between Algorithms \ref{Algo-1} and \ref{Algo-3}.

    \begin{theorem}\label{thm-eq-0330}
    	If Algorithms \ref{Algo-1} and \ref{Algo-3} share the same parameters \(\{S_k\}_{k\geq0}\) and initial point \(x^0\), their generated sequences \(\{x^k\}_{k\geq0}\) coincide exactly.
    \end{theorem}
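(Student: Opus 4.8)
The plan is to argue by induction on $k$ that the two iterate sequences coincide, while simultaneously tracking the structural role of the direction vectors $p_i$ produced in Algorithm \ref{Algo-3}. The guiding principle is that the recursion \eqref{iter-krylov} is exactly a (truncated) Gram--Schmidt orthogonalization, so the vectors $p_{j_k},\ldots,p_k$ ought to form an orthogonal basis of $\operatorname{Range}(M_k)$. Once this is in place, the single-term update $x^k+\delta_k p_k$ of Algorithm \ref{Algo-3} can be recognized as the orthogonal projection of $A^\dagger b$ onto $\Pi_k$ that defines Algorithm \ref{Algo-1}.

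Concretely, I would carry the induction hypothesis that $x^0,\ldots,x^k$ agree in both algorithms and that the directions already built satisfy the windowed orthogonality $\langle p_i,p_j\rangle=0$ whenever $0\le i<j\le k-1$ and $j-i\le \ell-1$. The base case $k=0$ is immediate, since both algorithms set $x^1=x^0+\frac{\gamma_0}{\|d_0\|_2^2}d_0$ with $d_0=-A^\top S_0S_0^\top(Ax^0-b)$. For the inductive step, I would first observe that any two indices in the active window $\{j_k,\ldots,k-1\}$ differ by at most $\ell-2$, so the hypothesis forces $p_{j_k},\ldots,p_{k-1}$ to be pairwise orthogonal; a short computation using the coefficients $\eta_k^i=\langle d_k,p_i\rangle/\|p_i\|_2^2$ in \eqref{iter-krylov} then gives $\langle p_k,p_i\rangle=0$ for $i=j_k,\ldots,k-1$, which extends the windowed orthogonality to the index $k$.

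Next I would convert the telescoping update $x^{i+1}-x^i=\delta_i p_i$ (valid by the hypothesis, with $\delta_i>0$ because $\gamma_i>0$) into $x^i-x^k=-\sum_{t=i}^{k-1}\delta_t p_t$, a triangular change of basis showing $\operatorname{span}\{x^{j_k}-x^k,\ldots,x^{k-1}-x^k\}=\operatorname{span}\{p_{j_k},\ldots,p_{k-1}\}=\operatorname{Range}(V_k)$; adjoining $d_k=p_k+\sum_{i=j_k}^{k-1}\eta_k^i p_i$ yields $\operatorname{Range}(M_k)=\operatorname{span}\{p_{j_k},\ldots,p_k\}$, a set that is orthogonal and nonzero, hence a basis (the non-vanishing of $p_k$ coming from the full column rank of $M_k$ in Proposition \ref{prob-full}). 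The orthogonal projection of $A^\dagger b$ onto $\Pi_k=x^k+\operatorname{Range}(M_k)$ is then $x^k+\sum_{i=j_k}^{k}\frac{\langle A^\dagger b-x^k,p_i\rangle}{\|p_i\|_2^2}p_i$. I would annihilate the first $k-j_k$ terms using the optimality relations \eqref{equality-opt}, which say $A^\dagger b-x^k\perp\operatorname{Range}(V_k)$, and evaluate the last coefficient via $\langle A^\dagger b-x^k,p_k\rangle=\langle A^\dagger b-x^k,d_k\rangle=\|S_k^\top(Ax^k-b)\|_2^2=\gamma_k$, where consistency ($AA^\dagger b=b$) is invoked. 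This collapses the projection to exactly $x^k+\frac{\gamma_k}{\|p_k\|_2^2}p_k=x^k+\delta_k p_k$, matching Algorithm \ref{Algo-3} and closing the induction.

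The step I expect to require the most care is the orthogonality bookkeeping under truncation: one must verify that the $(\ell-1)$-term Gram--Schmidt in \eqref{iter-krylov} is enough to keep the entire active window $\{p_{j_k},\ldots,p_k\}$ pairwise orthogonal, i.e. that every pair of window indices lies within the orthogonalization reach. This rests on the monotonicity $j_{k-1}\le j_k$ of the truncation index and is precisely what legitimizes the single-term collapse of the projection; by contrast, the consistency identity and the triangular span argument are routine.
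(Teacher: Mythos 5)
Your proposal is correct, but it takes a genuinely different route from the paper. The paper's proof is computational: it invokes Lemma \ref{lemma-effup} (the explicit tridiagonal inverse $C_k=C(\gamma_{j_k}\underline{s_{j_k}},\ldots,\gamma_{k-1}\underline{s_{k-1}})$ of $V_k^\top V_k$, itself proved by a lengthy induction in the appendix) to expand $V_{k+1}(V_{k+1}^\top V_{k+1})^{-1}V_{k+1}^\top$ as a telescoping sum that collapses to $\sum_{i=j_{k+1}}^{k}\tilde p_i\tilde p_i^\top/(\gamma_i\underline{s_i})$ with $\tilde p_i=x^{i+1}-x^i$, then matches this against the recursion \eqref{iter-krylov} to conclude $p_k=\tilde p_k/\underline{s_k}$ and $\delta_k=\underline{s_k}$ via $\lVert\tilde p_k\rVert_2^2=\gamma_k\underline{s_k}$. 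You instead bypass the inverse formula entirely: you establish the windowed orthogonality of the $p_i$ directly from the Gram--Schmidt recursion, show $\operatorname{Range}(M_k)=\operatorname{span}\{p_{j_k},\ldots,p_k\}$ by the triangular change of basis $x^i-x^k=-\sum_{t=i}^{k-1}\delta_t p_t$, and then collapse the orthogonal projection of $A^\dagger b$ onto $\Pi_k$ using \eqref{equality-opt} and consistency to get exactly $x^k+\delta_k p_k$. Your argument is more conceptual and self-contained (no Schur complement, no Cramer's rule, no Lemma \ref{lemma-effup}); the paper's pays for the explicit $C_k$ formula but gets it back because that formula is also the engine of the efficient implementation in Algorithm \ref{Algo-2}, and it yields the quantitative identities $\lVert p_k\rVert_2^2=\gamma_k/\underline{s_k}$ as byproducts. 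Two small points to make explicit when writing this up: (i) the orthogonality bookkeeping you flag does close, because indices in the active window $\{j_k,\ldots,k\}$ differ by at most $\min\{\ell-1,k\}$ while the truncated Gram--Schmidt step in \eqref{iter-krylov} reaches back exactly that far, and the relations \eqref{equality-opt} needed to kill the first $k-j_k$ projection coefficients are available because $j_{k-1}\le j_k$ puts $x^{j_k},\ldots,x^{k-1}$ inside $\Pi_{k-1}$; (ii) you must derive the orthogonality of the $p_i$ internally as you do, rather than citing Proposition \ref{prop-property-Krylov}, since the paper proves that proposition only after (and using) the equivalence theorem.
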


One may wonder why we present two equivalent implementations of Algorithm \ref{Algo-1} (Algorithms \ref{Algo-2} and \ref{Algo-3}) rather than just Algorithm \ref{Algo-3}. 
  This is because the derivation of Algorithm \ref{Algo-3} relies on Algorithm \ref{Algo-2}; specifically, the proof of Theorem \ref{thm-eq-0330} requires conclusions established from Algorithm \ref{Algo-2}. We will demonstrate that, while both algorithms share the same computational complexity, Algorithm \ref{Algo-3} is more efficient in practice, as it avoids matrix column extraction and concatenation operations.


  \subsubsection{Implementation comparison between Algorithms \ref{Algo-2} and \ref{Algo-3}}
  \label{subsection-com-2-3}
  
 First, we analyze the computational complexity of Algorithm \ref{Algo-3}. Similarly, we assume that the iterative sketching matrix $S\in\mathbb{R}^{m\times q}$. The computational complexity of the $k$-th iteration of Algorithm \ref{Algo-3} consists of the following components: (\romannumeral1) Computing $d_k$ requires $\mathcal{W}(S_k^\top A)+\mathcal{W}(S_k^\top b)+(4q-1)n$ flops; (\romannumeral2) Computing $\eta_k^{j_k}\ldots,\eta_k^{k-1}$ requires $2(k-j_k)n$ flops, as $\lVert p_{j_k}\rVert_2^2,\ldots,\lVert p_{k-1}\rVert_2^2$ are compute in advance; (\romannumeral3) Computing $p_k$ requires $2(k-j_k)n$ flops; (\romannumeral4) Computing $\delta_k$ requires $2n+2q-1$ flops; (\romannumeral5) Updating $x^{k+1}$ requires $2n$ flops. Hence, the total computational complexity of Algorithm \ref{Algo-3} is
  \begin{equation}\label{cc-alg3}
	\begin{aligned}
		\mathcal{W}(S_k^\top A)+\mathcal{W}(S_k^\top b)+(4q+4(k-j_k)+1)n+2q-1.
	\end{aligned}
	\end{equation}
%
It can be observed from \eqref{cc-alg2} and \eqref{cc-alg3} that Algorithms \ref{Algo-2} and \ref{Algo-3} have almost the same computational complexity. However, in practical computation, Algorithm \ref{Algo-3} can be more advantageous to implement than Algorithm \ref{Algo-2}.

We now provide a detailed comparison of the implementations between Algorithms \ref{Algo-2} and \ref{Algo-3}. For convenience, let \( V(:,i) \) denote the \( i \)-th column of the matrix \( V \), and \( V(:,[i:j]) \) denote the submatrix consisting of columns indexed by the set \(\{i,i+1,\ldots,j\}\). The matrix \( V_k \) in Algorithm \ref{Algo-2}  can be computed as:
\[
V_k = \overline{V_k}(:,[1:(k-j_k)]) - \overline{V_k}(:,k-j_k+1)(\mathbbm{1}_{k-j_k}^{k-j_k})^\top,
\]
where \(\overline{V}_k = \begin{pmatrix} x^{j_k}, \ldots, x^{k-1}, x^k \end{pmatrix} \in \mathbb{R}^{n \times (k-j_k+1)}\) and \(\mathbbm{1}^{k-j_k}_{k-j_k} \in \mathbb{R}^{k-j_k}\).
When \( j_k \geq 1 \), i.e., \( k \geq \ell \), the matrix \(\overline{V}_{k+1}\) is updated as:
\[
\overline{V}_{k+1} = \begin{pmatrix} x^{j_{k}+1}, \ldots, x^{k}, x^{k+1} \end{pmatrix} = \begin{pmatrix} \overline{V}_k(:,[2:\ell]), x^{k+1} \end{pmatrix} \in \mathbb{R}^{n \times \ell}.
\]
This indicates that when \( k \geq \ell \), Algorithm \ref{Algo-2} requires column extraction and concatenation at each iteration.

To efficiently implement Algorithm \ref{Algo-3}, during the initial steps where \( k = \ell-2 \), we construct the matrix
$
P_k = \begin{pmatrix}
\frac{p_0}{\lVert p_0 \rVert_2}, \ldots, \frac{p_k}{\lVert p_k \rVert_2}
\end{pmatrix}.
$
When \( k \geq \ell-1 \), the matrix \( P_k \in \mathbb{R}^{n \times (\ell-1)} \) is updated as follows
\[
P_k(:,i) = \begin{cases} 
\frac{p_k}{\lVert p_k \rVert_2}, & \text{if } i = i_k; \\
P_{k-1}(:,i), & \text{otherwise},
\end{cases}
\]
where \( i_k = \text{mod}(k+1, \ell) + 1 \). In fact, this update rule simply replaces the \( i_k \)-th column of \( P_{k-1} \) with \(\frac{p_k}{\lVert p_k \rVert_2}\) to obtain \( P_k \), eliminating the need for column extraction and concatenation operations. Now, \( p_{k+1} \) in equation (\ref{iter-krylov}) can be computed as
\[
p_{k+1} = d_{k+1} - \sum_{i=j_{k+1}}^{k} \frac{\langle d_{k+1}, p_i \rangle}{\lVert p_i \rVert_2^2} p_i = d_{k+1} - P_k P_k^\top d_{k+1}.
\]

    \subsection{Further properties }
    \label{subsection-FP}
    We have the following result for Algorithm \ref{Algo-3}.
    \begin{prop}\label{prop-property-Krylov}
    	Suppose that $\{x^k\}_{k\geq0}$ and $\{p_k\}_{k\geq0}$ are the sequences generated by Algorithm \ref{Algo-3}. Let $r^k=Ax^k-b$. Then for any  $j_k\leq t< v\leq k$, we have
    	\begin{itemize}
    		\item[(i)] $\langle p_v,p_t\rangle=0$;
    		\item[(ii)] $\langle S_t^\top r^{v},S_t^\top r^t\rangle=\left\{
    		\begin{array}{cc}
    			0,&\text{if }v=t+1\text{ or } j_k= j_t,\\
    			-\sum_{w=t}^{v-1}\sum_{i=j_{t}}^{j_k-1}\delta_w\eta_{t}^i\langle p_w,p_i\rangle,&\text{otherwise}.
    		\end{array}
    		\right..
    	$
    	\end{itemize}
    \end{prop}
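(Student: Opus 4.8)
The plan is to prove (i) first by strong induction on $v$, and then to derive (ii) from (i) together with the residual recursion induced by the update $x^{w+1}=x^w+\delta_w p_w$.

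For (i) I would prove the sliding-window invariant that $\langle p_v,p_t\rangle=0$ whenever $j_v\le t\le v-1$, where $j_v:=\max\{v-\ell+1,0\}$; the statement as written then follows, since $v\le k$ gives $j_v\le j_k\le t$. The base case $v=1$ is a single Gram--Schmidt step $p_1=d_1-\eta_1^0 p_0$. For the inductive step I would first show that the family $\{p_i:j_v\le i\le v-1\}$ is pairwise orthogonal: for $j_v\le a<b\le v-1$, monotonicity of $m\mapsto j_m$ and $b\le v-1$ give $j_b\le j_v\le a$, so the inductive hypothesis at index $b$ yields $\langle p_b,p_a\rangle=0$. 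Since $p_v=d_v-\sum_{i=j_v}^{v-1}\eta_v^i p_i$ with $\eta_v^i=\langle d_v,p_i\rangle/\|p_i\|_2^2$ is precisely classical Gram--Schmidt against a pairwise-orthogonal family, expanding $\langle p_v,p_t\rangle$ collapses the sum to the single term $\langle d_v,p_t\rangle$, which cancels, giving orthogonality for every $t$ in the window.

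For (ii) I would start from $r^v-r^t=A(x^v-x^t)=A\sum_{w=t}^{v-1}\delta_w p_w$, so that
\[
\langle S_t^\top r^v,S_t^\top r^t\rangle=\|S_t^\top r^t\|_2^2+\sum_{w=t}^{v-1}\delta_w\langle S_t^\top A p_w,S_t^\top r^t\rangle,
\]
and moving $S_t$ across the inner product gives $\langle S_t^\top A p_w,S_t^\top r^t\rangle=\langle p_w,A^\top S_tS_t^\top r^t\rangle=-\langle p_w,d_t\rangle$. Substituting the defining relation $d_t=p_t+\sum_{i=j_t}^{t-1}\eta_t^i p_i$ splits each term as $\langle p_w,p_t\rangle+\sum_{i=j_t}^{t-1}\eta_t^i\langle p_w,p_i\rangle$. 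The payoff comes from (i). For the diagonal term $w=t$, all cross terms $\langle p_t,p_i\rangle$ with $j_t\le i\le t-1$ vanish, so $\langle p_t,d_t\rangle=\|p_t\|_2^2$ and $\delta_t\|p_t\|_2^2=\|S_t^\top r^t\|_2^2$ exactly cancels the leading norm. For $w>t$, part (i) gives $\langle p_w,p_t\rangle=0$ (using $t\ge j_k\ge j_w$ and $t<w$), leaving only $\sum_i\eta_t^i\langle p_w,p_i\rangle$. I would then replace the upper limit $t-1$ by $j_k-1$: for $i\in[j_k,t-1]$ one has $i\ge j_k\ge j_w$ and $i<w$, so $\langle p_w,p_i\rangle=0$ by (i) and these terms drop. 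Collecting the remaining terms produces exactly $-\sum_{w=t}^{v-1}\sum_{i=j_t}^{j_k-1}\delta_w\eta_t^i\langle p_w,p_i\rangle$, where the $w=t$ term contributes zero and may be included freely. The two special cases are then immediate: if $v=t+1$ the outer sum is empty, and if $j_k=j_t$ then every index $i\ge j_t=j_k\ge j_w$ forces $\langle p_w,p_i\rangle=0$, so the expression vanishes.

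The main obstacle is the index bookkeeping for the sliding windows, namely keeping straight the three thresholds $j_w\le j_k$ and $j_t\le j_k$ together with the monotonicity of $m\mapsto j_m$, in order to decide precisely which inner products $\langle p_w,p_i\rangle$ are forced to vanish by (i) and to justify both the limit replacement $t-1\mapsto j_k-1$ and the case split. Once these ranges are pinned down, the remaining manipulations are routine linear algebra.
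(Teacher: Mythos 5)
Your proof of part (ii) follows the paper's argument essentially verbatim: the same telescoping $r^v=r^t+\sum_{w=t}^{v-1}\delta_w Ap_w$, the same substitution $d_t=p_t+\sum_{i=j_t}^{t-1}\eta_t^i p_i$, the same cancellation $\delta_t\lVert p_t\rVert_2^2=\lVert S_t^\top r^t\rVert_2^2$, and the same use of (i) to truncate the inner sum from $t-1$ down to $j_k-1$ and to dispose of the two special cases. Part (i), however, is where you genuinely diverge. The paper writes $p_v=(x^{v+1}-x^v)/\delta_v$ and expands $\langle x^{v+1}-x^v,x^{t+1}-x^t\rangle$ through the point $A^\dagger b$ into four inner products of the form $\langle x^m-A^\dagger b,\,x^s-x^m\rangle$, each of which vanishes by the projection-optimality identities \eqref{equality-opt}; this is a one-display argument, but it leans on the characterization of $x^{m}$ as the orthogonal projection of $A^\dagger b$ onto $\Pi_{m-1}$, i.e., on the link with Algorithm \ref{Algo-1}. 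You instead prove the sliding-window invariant $\langle p_v,p_t\rangle=0$ for $j_v\le t\le v-1$ by strong induction directly from the recursion \eqref{iter-krylov}: the inductive hypothesis, combined with the monotonicity of $m\mapsto j_m$, makes $\{p_i\}_{i=j_v}^{v-1}$ pairwise orthogonal, so the update is exact Gram--Schmidt against an orthogonal family and the sum collapses to a single cancelling term. This induction closes correctly, is self-contained within Algorithm \ref{Algo-3} (no appeal to \eqref{equality-opt} or to Theorem \ref{thm-eq-0330}), and in fact yields a slightly stronger conclusion than stated --- orthogonality over the whole window $[j_v,v-1]$ rather than only $[j_k,v-1]$ --- which is exactly what licenses your side remark that the $w=t$ terms in the final double sum of (ii) vanish. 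The only thing you take for granted is that the $p_i$ in the window are nonzero so that the $\eta$'s are well-defined, but the paper makes the same implicit assumption (it is guaranteed by the requirement $S_k^\top(Ax^k-b)\neq 0$).
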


     \begin{remark}\label{remark-strictly}
  Based on Proposition \ref{prop-property-Krylov}, we can demonstrate that the parameter \(q_k\) in Theorem \ref{Thm-Convergence-SCRIM-AS} can be strictly greater than $1$ in certain cases. For instance, if \(\Omega = \{S\}\), where \(S\) is a fixed matrix such that \(S^\top S\) is positive definite, then we have
  \[
  \begin{aligned}
  	(V_k^\top d_k)_{k-j_k} &= \langle x^{k-1} - x^k, d_k \rangle \\
  	&= \langle S^\top A(x^k - x^{k-1}), S^\top(Ax^k - b) \rangle \\
  	&= \lVert S^\top(Ax^k - b) \rVert_2^2 - \langle S^\top(Ax^{k-1} - b), S^\top(Ax^k - b) \rangle \\
  	&= \lVert S^\top(Ax^k - b) \rVert_2^2,
  \end{aligned}
  \]
  where the last equality follows from Proposition \ref{prop-property-Krylov}. This implies that \(V_k^\top d_k \neq 0\) as long as \(Ax^k \neq b\). Consequently, we have \(\lVert V_k V_k^\dagger d_k \rVert_2^2 >0 \) and thus,
  \[
  q_k = \left(1 - \frac{\lVert V_k V_k^\dagger d_k \rVert_2^2}{\lVert d_k \rVert_2^2}\right)^{-1} > 1.
  \] 	
    \end{remark}

    \begin{remark}
    	\label{remark-ob0329}
    From Proposition \ref{prop-property-Krylov}, it can be seen that \(p_{j_{k}}, \ldots, p_{k}\) are orthogonal. For any \(i = j_{k}, \ldots, k\), we have
    \[
    p_i = \frac{x^{i+1} - x^i}{\delta_i} = \frac{x^{i+1} - x^{k}}{\delta_i} - \frac{x^i - x^{k}}{\delta_i},
    \]
    which implies that \(p_{j_{k}}, \ldots, p_{k}\) form an orthogonal basis for the subspace \(\operatorname{span}\{x^{j_{k}} - x^{k}, \ldots, x^{k+1} - x^{k}\}\). Now, the iteration scheme \eqref{iter-krylov} implements a Gram-Schmidt orthogonalization \cite[Section 5.2.7]{golub2013matrix}   of \(d_{k+1}\) with respect to the existing basis \(\{p_{j_{k+1}}, \ldots, p_{k}\}\).
    \end{remark}
    
    If \(\ell=2\), then \(j_{k+1}=k\) for \(k \geq 0\). For any initial point \(x^0 \in \text{Range}(A^\top)\), let \(p_0 = -A^\top S_0 S_0^\top (Ax^0-b)\). In this case, Algorithm \ref{Algo-3} simplifies to
    	\begin{equation*}
    		\left\{
    		\begin{array}{l}
    			\delta_k=\lVert S_{k}^\top (Ax^{k}-b)\rVert_2^2/\lVert p_k\rVert_2^2,\\
    			x^{k+1}=x^k+\delta_kp_k,\\
    			d_{k+1}=-A^\top S_{k+1}S_{k+1}^\top (Ax^{k+1}-b),\\
    			\eta_{k+1}=\langle d_{k+1},p_k\rangle/\lVert p_k\rVert_2^2,\\
    			p_{k+1}=d_{k+1}-\eta_{k+1}p_k.
    		\end{array}\right..
    	\end{equation*}
    	This iteration scheme aligns with the stochastic conjugate gradient (SCG) method proposed in \cite{zeng2024adaptive}.
	If $\ell=\infty$ and $\Omega=\{I\}$, we have $j_k=0$. Then
	$$
	\begin{aligned}
		\eta_{k+1}^i&=\frac{\langle d_{k+1},p_i \rangle}{\|p_i\|^2_2}=\frac{\langle -A^\top (Ax^{k+1}-b),x^{i+1}-x^i\rangle}{\delta_i\|p_i\|^2_2}\\
		&=\frac{\langle -Ax^{k+1}+b,Ax^{i+1}-Ax^i\rangle}{\|Ax^i-b\|^2_2}=\frac{\langle -r^{k+1},r^{i+1}-r^i\rangle}{\|r^i\|^2_2}\\
		&=\begin{cases}
				-\frac{\|r^{k+1}\|^2_2}{\|r^{k}\|^2_2}, \ &\text{if} \ i=k,\\
			0, & \text{otherwise},
		\end{cases}
	\end{aligned}
	$$
	where the last equality follows from Proposition \ref{prop-property-Krylov}.
	Hence, in this case, Algorithm \ref{Algo-3} simplifies to
    	\begin{equation*}
    		\left\{
    		\begin{array}{l}
    			\delta_k=\lVert Ax^{k}-b\rVert_2^2/\lVert p_k\rVert_2^2,\\
    			x^{k+1}=x^k+\delta_kp_k,\\
    			d_{k+1}=-A^\top (Ax^{k+1}-b),\\
    			\eta_{k+1}=-\|Ax^{k+1}-b\|^2_2/\lVert Ax^k-b\rVert_2^2,\\
    			p_{k+1}=d_{k+1}-\eta_{k+1}p_k.
    		\end{array}\right.  \Leftrightarrow \left\{
    		\begin{array}{ll}
    		\delta_k=\|r^k\|^2_2/\|p_k\|^2_2,\\
    		x^{k+1}=x^k+\delta_k p_k,  \\
    		r^{k+1}=r^k+\delta_k Ap_k,  \\
    		\tau_k=\|r^{k+1}\|^2_2/\|r^k\|^2_2,
    		\\
    		p_{k+1}=-A^\top r^{k+1}+\tau_k p_k.
    		\end{array}
    		\right.
    	\end{equation*}
    	This iteration scheme is precisely the conjugate gradient normal equation (CGNE) method \cite[Section 11.3.9]{golub2013matrix}.
    	Furthermore, the vectors $\left\{\frac{p_0}{\lVert p_0\rVert_2},\ldots,\frac{p_k}{\lVert p_k\rVert_2}\right\}$ form an orthogonal basis for the Krylov subspace $\mathcal{K}_{k+1}(A^\top A,A^\top r^0)$.  Indeed, let $P_k=\begin{pmatrix}
    		\frac{p_0}{\lVert p_0\rVert_2},\ldots,\frac{p_k}{\lVert p_k\rVert_2}
    	\end{pmatrix}$. The $(i,j)$-th $(i\leq j)$ entry of the symmetric matrix $P_k^\top A^\top AP_k$ satisfies
    	\begin{equation*}
    		\begin{aligned}
    			\frac{\langle Ap_i,Ap_j\rangle}{\lVert p_i\rVert_2\lVert p_j\rVert_2}=&\frac{\langle A(x^{i+1}-x^i),A(x^{j+1}-x^{j})\rangle}{\delta_{i}\delta_{j}\lVert p_i\rVert_2\lVert p_j\rVert_2}=\frac{\langle r^{i+1}-r^i,r^{j+1}-r^{j}\rangle}{\delta_{i}\delta_{j}\lVert p_i\rVert_2\lVert p_j\rVert_2}\\
    			=&\frac{\langle r^{i+1},r^{j+1}\rangle-\langle r^{i+1},r^{j}\rangle}{\delta_i\delta_{j}\lVert p_i\rVert_2\lVert p_j\rVert_2}
    			+\frac{-\langle r^{i},r^{j+1}\rangle+\langle r^{i},r^{j}\rangle}{\delta_i\delta_{j}\lVert p_i\rVert_2\lVert p_j\rVert_2}   
    			\\
    				=&\left\{
    			\begin{array}{cc}
    				\frac{1-\eta_{i+1}}{\delta_i}, \ &\text{if }  i=j,\\
    				\sqrt{\frac{-\eta_{i+1}}{\delta_i\delta_{i+1}}},& \ \text{if } i=j-1  \\
    				0,&\text{otherwise},
    			\end{array}
    			\right.
    		\end{aligned}
    	\end{equation*}
    	where the last equality follows from Proposition \ref{prop-property-Krylov} and $\delta_i\lVert p_i\rVert^2_2=\|r^i\|^2_2$. Hence, the iteration scheme can be viewed as the Lanczos process \cite[Section 10.1.2]{golub2013matrix}  on the subspace  $\mathcal{K}_{k+1}(A^\top A,A^\top r^0)$ in some sense.
    	
    	
    	

  \subsection{Comparison to randomized-sketching-based methods}\label{subsec 4.4}
In this subsection, we compare our proposed IS-Krylov method with two typical classes of randomized-sketching-based approaches: the sketched GMRES (SGMRES) method \cite{nakatsukasa2024fast,derezinski2024recent} and the randomized GMRES (RGMRES) \cite{balabanov2022randomized}.   We consider the linear system \(Ax = b\), where \(A \in \mathbb{R}^{n \times n}\) is a \emph{nonsingular} matrix and \(b \in \mathbb{R}^n\).


  \textbf{The SGMRES method.}
 Let \(x^0\) be the initial guess, \(r^0 = Ax^0 - b\) the initial residual, and \(\mathcal{K}_k(A, r^0)\) the associated Krylov subspace. Suppose that $S\in\mathbb{R}^{n\times q}$ is a randomized sketching matrix. At each iteration, the SGMRES method \cite{nakatsukasa2024fast} solves the sketched least-squares problem
$$\min \frac{1}{2}\lVert S^\top(Ax-b)\rVert_2^2\quad\text{ s.t. }x\in x^0+\mathcal{K}_k(A,r^0).$$
When the sketching matrix $S$ satisfies $SS^\top =I_n$, this method reduces to the classical GMRES method.

%
  \textbf{The RGMRES method.}
The RGMRES method builds upon the randomized Gram-Schmidt (RGS) orthogonalization process. To establish context, we first briefly review the classical Gram-Schmidt (CGS) procedure. Given vectors \( w_1,\ldots,w_m \in \mathbb{R}^n \) to be orthogonalized, the CGS process initializes with \( Q_1 = q_1 = w_1/\|w_1\|_2 \) and iterates as follows
\begin{equation*}
\left\{
\begin{array}{rl}
 q_{i+1}&=w_{i+1}-Q_{i}Q_{i}^\dagger w_{i+1},\\
 q_{i+1}&=q_{i+1}/\lVert q_{i+1}\rVert_2,\\
 Q_{i+1}&=[ Q_{i}, q_{i+1}].
\end{array}\right..
\end{equation*}
Note that, under infinite precision arithmetic, the pseudoinverse $Q_i^\dagger$ can be replaced by $Q_i^\top$ due to the column orthonormality of $Q_{i}$. However, in finite precision, rounding errors may destroy this orthogonality. To address this, for $i\geq1$, the update of $q_{i+1}$ is usually written as
$$q_{i+1}=w_{i+1}-Q_{i}y_{i+1} \ \text{with} \ y_{i+1}\in\arg\min \ \lVert Q_{i}y-w_{i+1}\rVert_2^2.$$
The RGS process \cite{balabanov2022randomized} reduces computational cost by solving sketched least-squares problems. Using a random sketching \( S^\top \in \mathbb{R}^{q \times n} \), it initializes with \( Q_1^r = q_1^r = w_1/\|S^\top w_1\|_2 \) and iterates
\begin{equation*}
\left\{
\begin{array}{rl}
y_{i+1}^r&\in\arg\min\lVert S^\top(Q_{i}^ry-w_{i+1})\rVert_2^2,\\
 q_{i+1}^r&=w_{i+1}-Q_{i}^ry_{i+1}^r,\\
 q_{i+1}^r&=q_{i+1}^r/\lVert S^\top q_{i+1}^r\rVert_2,\\
 Q_{i+1}^r&=[ Q_{i}^r, q^r_{i+1}].
\end{array}\right..
\end{equation*}
When $SS^\top=I_n$, the RGS process reduces to the CGS process. Although RGS solves a lower-dimensional subproblem at each step, if the sketching matrix $S$ satisfies certain properties, it still guarantees the construction of a complete orthonormal basis for the original subspace, i.e., $\text{span}\{q_1^r,\ldots,q_m^r\}=\text{span}\{w_1,\ldots,w_m\}$. Building on the RGS process, the RGS-Arnoldi algorithm has been proposed in \cite{balabanov2022randomized} for the efficient construction of Krylov subspace bases. For example, to construct an orthogonal basis for the Krylov subspace $\mathcal{K}_k(A,r^0)$, the RGS-Arnoldi process initializes with $w_1=r^0$ and $q_1^r=w_1/\lVert S^\top w_1\rVert_2$. For $i\geq1$, the RGS-Arnoldi process iterates as follows
\begin{equation*}
\left\{
\begin{array}{l}
 w_{i+1}=Aq^r_{i},\\
 q_{i+1}^r\text{ is obtained by using the RGS process}.\\ 
\end{array}\right..
\end{equation*}
By replacing the classical Arnoldi process with the RGS-Arnoldi process, we get the RGMRES method.

 It can be seen that a key distinction between randomized sketching-based methods and the proposed IS-Krylov subspace framework lies in the treatment of the sketching matrix. In both SGMRES and RGMRES methods, the randomized sketching matrix $S$ is randomly initialized but remains fixed throughout the entire iterative process. In contrast, the IS-Krylov subspace method allows the iterative sketching matrices $\{S_k\}_{k\geq0}$ to be generated dynamically from user-defined probability spaces.  Moreover, our IS-Krylov subspace framework can handle linear systems regardless of the dimensions \( m \) and \( n \), or the rank of \( A \).

\section{Numerical experiments}\label{sec-exp}

In this section, we implement the efficient RIM-AS (Algorithm \ref{Algo-2}) and IS-Krylov (Algorithm \ref{Algo-3}). We also compare the proposed methods with several state-of-the-art algorithms, including the randomized average block Kaczmarz (RABK) method \cite{necoara2019faster}, the stochastic conjugate gradient  (SCG) method \cite{zeng2024adaptive}, as well as the built-in {\sc MATLAB} functions \texttt{pinv} and \texttt{lsqminnorm}. All the methods are implemented in MATLAB R2024a for Windows 11 on a LAPTOP PC with an Intel Core Ultra 7 155H @ 1.40 GHz and 32 GB memory. The code to reproduce our results can be found at \href{https://github.com/xiejx-math/RIM-Krylov}{https://github.com/xiejx-math/RIM-Krylov}.

We begin by introducing several types of sketching matrices that can be employed in our algorithm. These techniques are widely used in randomized numerical linear algebra and optimization, each offering unique advantages that make them particularly suitable for specific problems. For a more comprehensive discussion of additional sketching methods, we refer the reader to, e.g., \cite[Sections 8 and 9]{martinsson2020randomized}.

\textbf{Partition sampling:} Given the size of block $q$, we consider the index parition of $[m]$:
\begin{equation*}
	\begin{aligned}
		&\mathcal{I}_i = \{\varpi(k) : k = (i-1)q+1, (i-1)q+2, \dots, iq\}, \quad i = 1, 2, \dots, t-1,\\
		&\mathcal{I}_t = \{\varpi(k) : k = (t-1)q+1, (t-1)q+2, \dots, m\}, \quad |\mathcal{I}_t| \leq q,
	\end{aligned}
\end{equation*}
where $\varpi$ is a uniformly random permutation of $[m]$. We select the sampling matrix $S^\top = I_{ \mathcal{I}_i}$ with the probability $\lVert A_{\mathcal{I}_i}\rVert_F^2 / \lVert A\rVert_{F}^2$. Partition sampling sketches are very cheap to compute; indeed, we need not even compute \(S^\top A\) and \(S^\top b\) since it is simply equivalent to fetching the rows of \(A\) indexed by a random subset. We refer to Algorithm \ref{Algo-2} and Algorithm \ref{Algo-3} with partition sampling as RIM-AS with  partition sampling (RIM-AS-PS) and IS-Krylov with  partition sampling (IS-Krylov-PS), respectively.

\textbf{Uniform sampling and CountSketch:} 
We consider the uniform sampling of $q$ unique indices forming the set \(\mathcal{J}\),  where \(\mathcal{J} \subset [m]\) and the cardinality  of \(\mathcal{J}\) is \(q\). The number of possible choices for \(\mathcal{J}\) is \(\binom{m}{q}\), and the probability of selecting any specific \(\mathcal{J}\) is \(\operatorname{Prob}(\mathcal{J}) = 1/\binom{m}{q}\).  The uniform sketching matrix is defined as \(S^\top_{US} = I_{\mathcal{J},:}\).

The CountSketch, originally from the streaming data literature \cite{charikar2004finding,cormode2005improved} and popularized for matrix sketching by \cite{clarkson2017low}, is defined as
$
S_{CS}^\top=DI_{\mathcal{J},:},
$
where \(D \in \mathbb{R}^{q \times q}\) is a diagonal matrix with elements sampled uniformly from \(\{-1, 1\}\), and \(\mathcal{J}\) is chosen by uniformly sampling \(q\) elements from \([m]\). Since \(S_{US} S_{US}^\top = S_{CS} S_{CS}^\top\) and \(\|S_{US}^\top (Ax-b)\|_2 = \|S_{CS}^\top (Ax-b)\|_2\), the iteration schemes in Algorithms \ref{Algo-2} and \ref{Algo-3} for both uniform sampling and CountSketch are identical. In these cases, we refer to Algorithms \ref{Algo-2} and \ref{Algo-3} as RIM-AS with CountSketch (RIM-AS-CS) and IS-Krylov with CountSketch (IS-Krylov-CS), respectively.




\textbf{Gaussian sketch:} 
A Gaussian sketch is defined as a random matrix \(S\in\mathbb{R}^{m\times q}\), where each element is independently and identically distributed (i.i.d.) according to the standard Gaussian distribution. For a dense matrix \( A \in \mathbb{R}^{m \times n} \), the computational cost of forming the sketched product \( S^\top A \) typically requires \( O(mnq) \) floating-point operations, which can become prohibitive for large-scale problems. However, in practical implementations, this computational burden can be substantially mitigated through parallel computing techniques or by exploiting sparsity in \( A \) when applicable. For a comprehensive analysis of these practical benefits and implementation considerations, we refer readers to \cite{meng2014lsrn}. 
In this case, Algorithm \ref{Algo-3} yields  IS-Krylov with Gaussian sketch (IS-Krylov-GS).

\textbf{Subsampled randomized Hadamard transform (SRHT):} 
The SRHT \cite{woolfe2008fast,tropp2011improved} is defined as a sketching matrix  \(S\in\mathbb{R}^{m\times q}\) with the structure
$
S^\top=\sqrt{\frac{m}{q}}I_{\mathcal{J},:}H_m D,
$
where \(D\in\mathbb{R}^{m\times m}\) is a diagonal matrix with elements sampled uniformly from \(\{-1,1\}\), \(H_m\in\mathbb{R}^{m\times m}\) is the Hadamard matrix\footnote{The Hadamard transform is defined for \(m=2^p\) for some positive integer \(p\). If \(m\) is not a power of $2$, a standard practice is to pad the data matrix with \(2^{\lceil\log_2(m)\rceil}-m\) additional rows of zeros.} of order \(m\), and the set \(\mathcal{J}\) is  chosen by uniformly sampling \(q\) elements from $[m]$. The SRHT can be efficiently computed using \(O(mn \log q)\) operations via the fast Walsh-Hadamard transform \cite{fino1976unified}. In this context, Algorithm \ref{Algo-3} yields  IS-Krylov with SRHT (IS-Krylov-SRHT).

\subsection{Numerical setup}

We consider two types of coefficient matrices in our experiments. The first type consists of synthetic Gaussian matrices generated using the {\sc MATLAB} function \texttt{randn}. Specifically, for given parameters \( m \), \( n \), \( r \), and a target condition number \( \kappa > 1 \), we construct a dense matrix \( A \in \mathbb{R}^{m \times n} \) as \( A = U D V^\top \), where \( U \in \mathbb{R}^{m \times r} \), \( D \in \mathbb{R}^{r \times r} \), and \( V \in \mathbb{R}^{n \times r} \). In {\sc MATLAB} notation, these matrices are generated via \texttt{[U, $\sim$] = qr(randn(m, r), 0)}, \texttt{[V, $\sim$] = qr(randn(n, r), 0)}, and \texttt{D = diag(1 + ($\kappa$ - 1) .* rand(r, 1))}. As a result, the condition number and the rank of \( A \) are upper bounded by \( \kappa \) and \( r \), respectively.  The second type is drawn from real-world datasets, where only the coefficient matrices \( A \) are used in the experiments. These matrices are obtained from the SuiteSparse Matrix Collection \cite{kolodziej2019suitesparse} and LIBSVM \cite{chang2011libsvm} repositories.

In our implementations, to ensure the consistency of the linear system, we first generate the ground-truth solution as \( x^* = \texttt{randn(n,1)} \), and subsequently compute the right-hand side as \( b = A x^* \). All algorithms are initialized with \( x^0 = 0 \). The iterative process is terminated when the relative solution error (RSE), defined as  
$
 \text{RSE} = \frac{\lVert x^k - A^\dagger b \rVert_2^2}{\lVert x^0 - A^\dagger b \rVert_2^2},
$  
 falls below a prescribed tolerance or when the maximum number of iterations is reached. In practice, the quantity \( \|S_k^\top (A x^k - b)\|_2 \) is considered to be zero if it is smaller than \texttt{eps}. Each experiment is repeated over $20$ independent trials to ensure statistical reliability.

Finally, we note that, as established in Remark~\ref{remark-xie-0325-2}, the RABK method emerges as a special case of our framework when partition sampling is employed with \( \ell = 1 \). Similarly, as discussed in Section~\ref{subsection-FP}, the SCG method with partition sampling (SCGP) also falls within our framework, corresponding to the choice \( \ell = 2 \). In particular, we will later provide a detailed comparison between the proposed IS-Krylov-PS method and these two methods.

\subsection{Comparison between Algorithms \ref{Algo-2} and \ref{Algo-3}}

In this subsection, we evaluate the performance of Algorithms \ref{Algo-2} and \ref{Algo-3}, focusing solely on partition sampling among the various available sketching strategies, specifically RIM-AS-PS and IS-Krylov-PS.  The results are presented in Figure \ref{fig:1}, where algorithm performance is measured in terms of iteration count and CPU time.    
The bold line illustrates the median value derived from $20$ trials. The lightly shaded area signifies the  range from the minimum to the maximum values, while the darker shaded one indicates the data lying between the $25$-th and $75$-th quantiles.

Figure~\ref{fig:1} demonstrates that while RIM-AS-PS and IS-Krylov-PS require a nearly identical number of iterations to converge, IS-Krylov-PS exhibits significantly better computational efficiency in terms of CPU time. This aligns with our theoretical complexity analysis in Subsection~\ref{subsection-com-2-3}, which predicts the superior efficiency of the IS-Krylov framework. Therefore, we focus the subsequent experiments on the algorithms derived from the IS-Krylov framework, i.e., Algorithm~\ref{Algo-3}.


\begin{figure}
	\centering
	\includegraphics[width=0.32\linewidth]{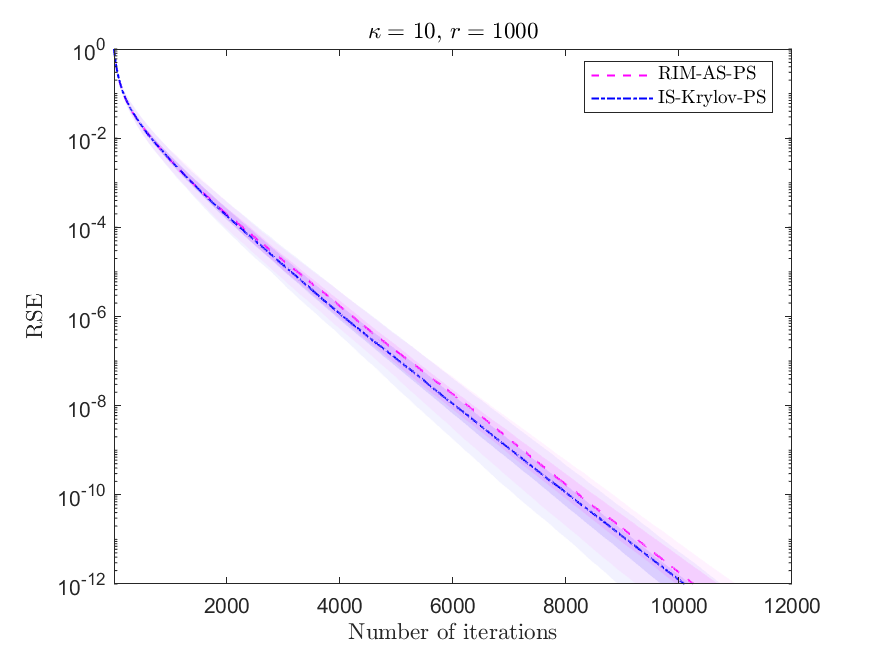}
	\includegraphics[width=0.32\linewidth]{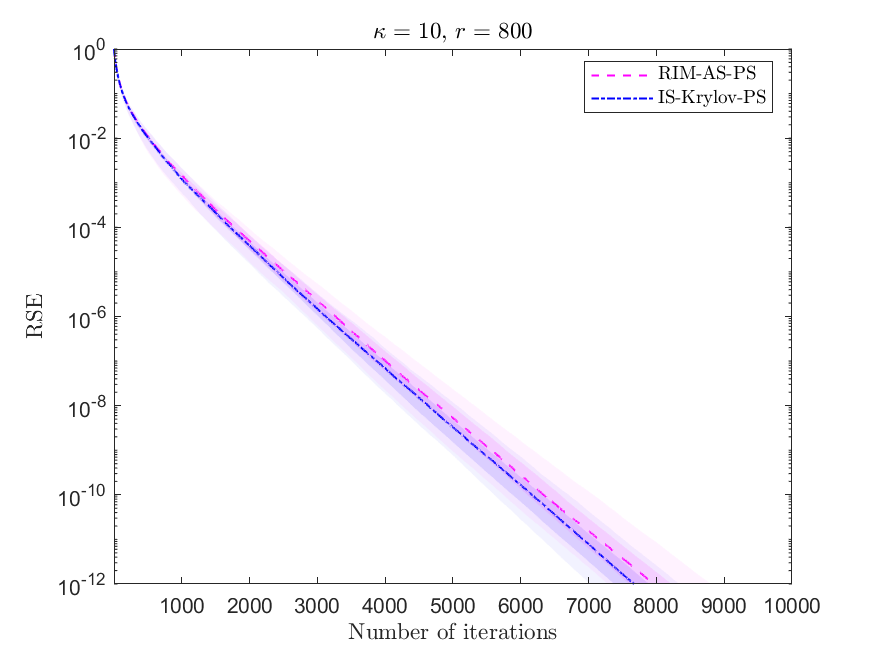}
	\includegraphics[width=0.32\linewidth]{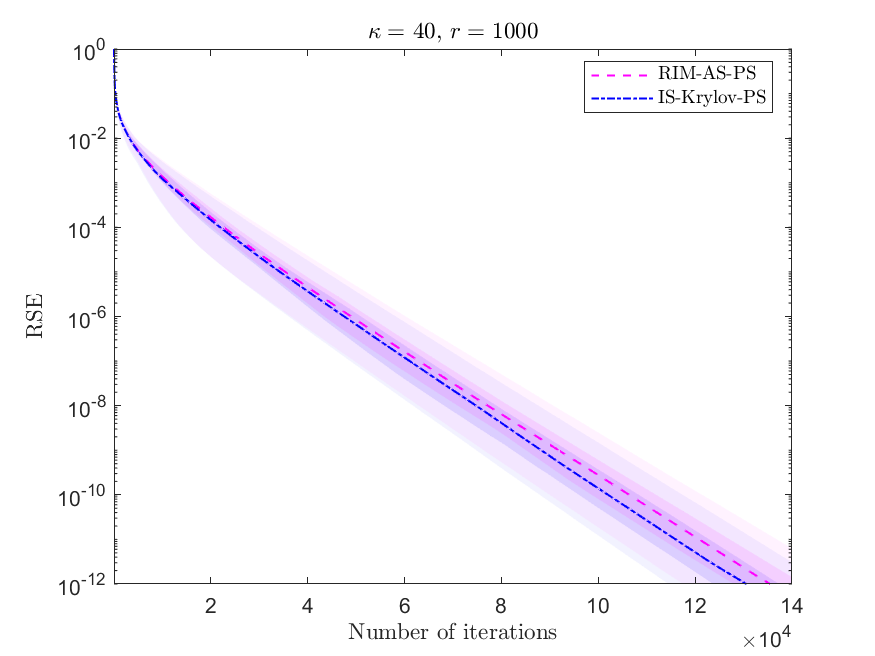}\\
	\includegraphics[width=0.32\linewidth]{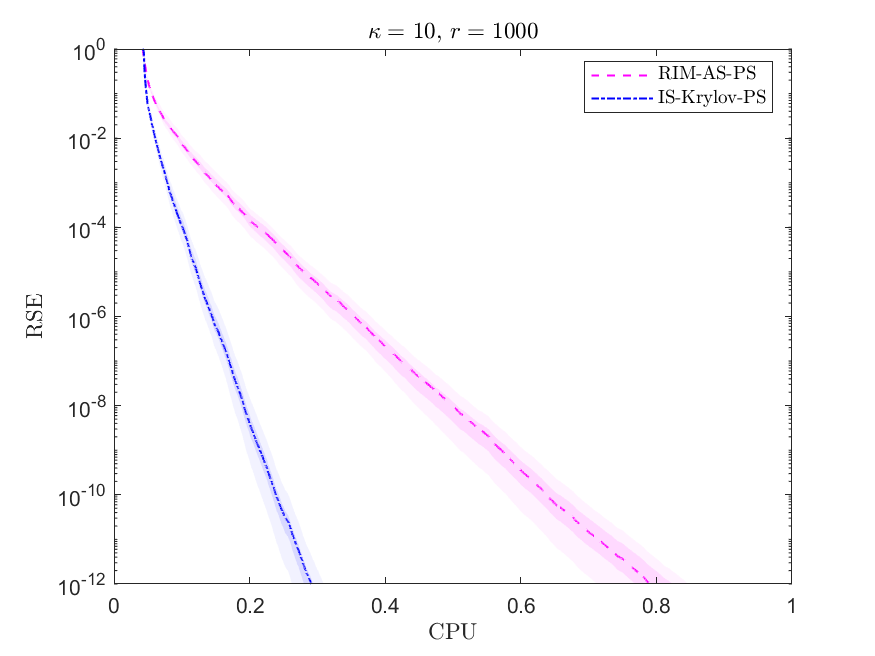}
	\includegraphics[width=0.32\linewidth]{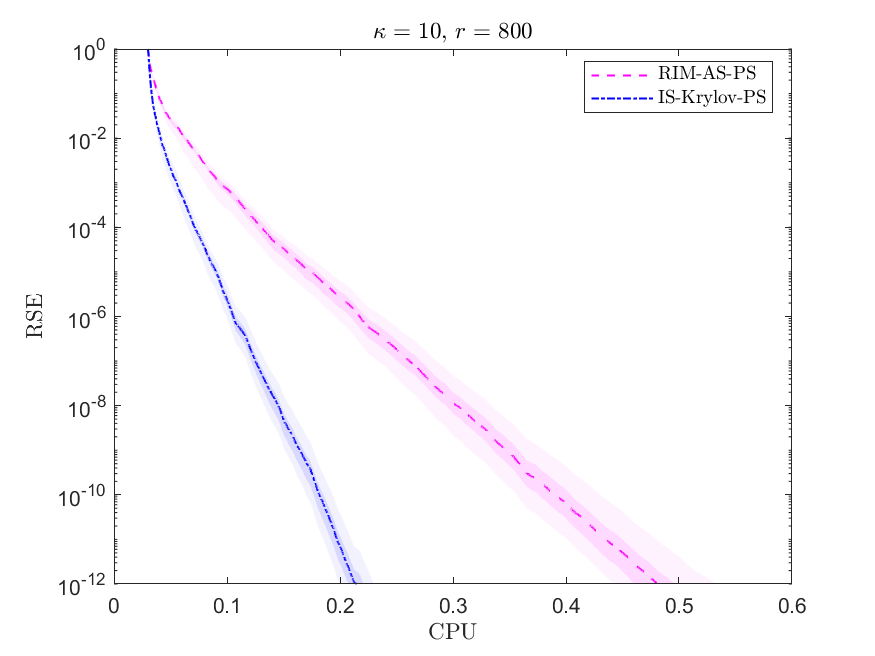}
	\includegraphics[width=0.32\linewidth]{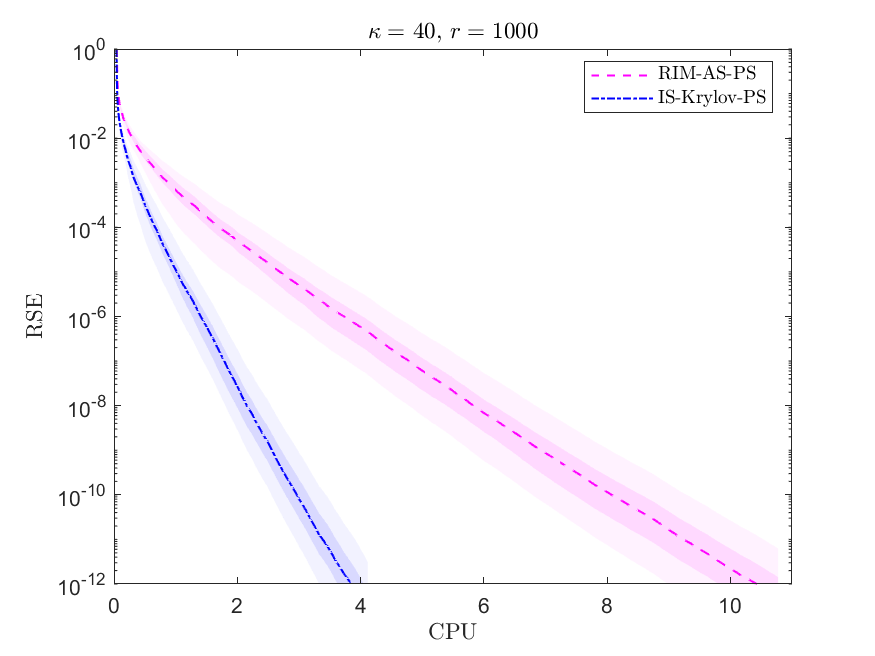}
	\caption{The figures illustrate the  evolution of RSE with respect to the number of iterations (top) and the CPU time (bottom). The title of each subplot indicates the corresponding values of $\kappa$ and $r$.  The other parameters are fixed as $m = 5000$, $n = 1000$, $q = 30$, and $\ell = 10$.  All computations are terminated once RSE$<{10}^{-12}$.}
	\label{fig:1}
\end{figure}

\subsection{ Comparison of different types of sketches}

This subsection presents a comparative evaluation of the four sketching strategies discussed previously, with particular emphasis on analyzing the performance of the algorithms derived from Algorithm \ref{Algo-3}, namely IS-Krylov-PS, IS-Krylov-CS, IS-Krylov-GS, and IS-Krylov-SRHT.  
Figure~\ref{fig:2} reports both the CPU time and the number of iterations required by each method.  

It can be seen from Figure~\ref{fig:2} that all four algorithms converge in the same number of iterations. However, IS-Krylov-GS and IS-Krylov-SRHT incur higher CPU time compared to the other two methods, primarily due to their greater per-iteration computational cost. Although IS-Krylov-PS and IS-Krylov-CS share the same per-iteration complexity, IS-Krylov-PS outperforms IS-Krylov-CS in terms of CPU time. This performance gap is attributed to the additional overhead in IS-Krylov-CS caused by repeatedly extracting rows from the matrix \( A \) during runtime. In contrast, IS-Krylov-PS avoids this overhead by pre-storing the submatrices of \( A \) based on the initial partition, thereby eliminating the need for repeated row access operations. Indeed, IS-Krylov-PS consistently outperforms the other three methods in CPU time across our experiments. Therefore, we will focus our subsequent tests on the IS-Krylov-PS method.

\begin{figure}
	\centering
	\includegraphics[width=0.32\linewidth]{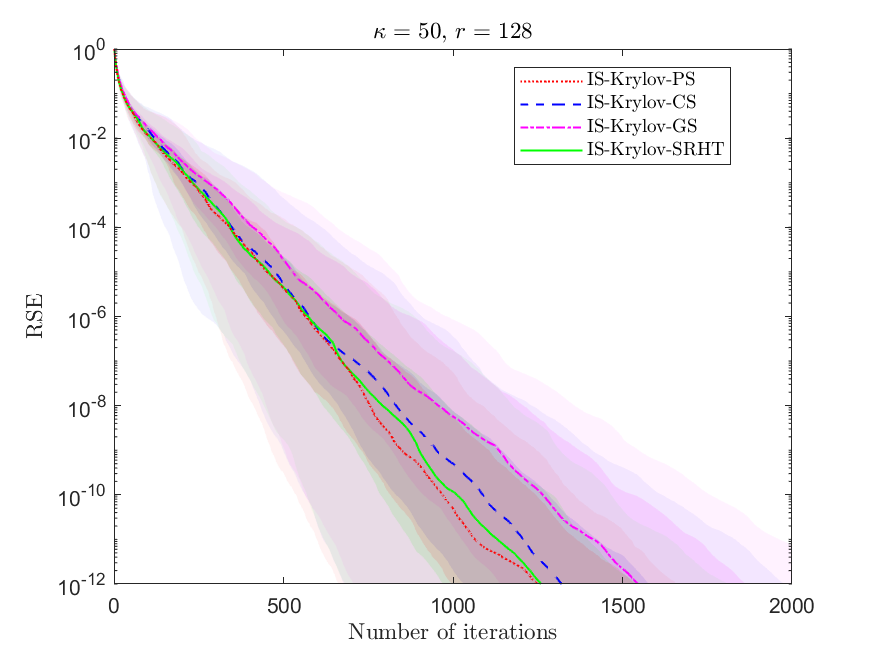}
	\includegraphics[width=0.32\linewidth]{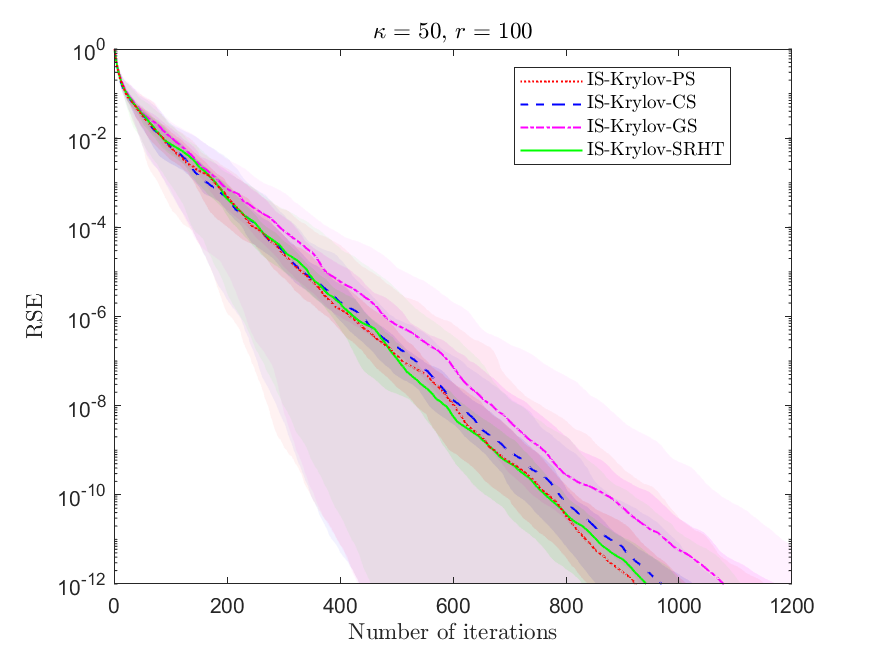}
	\includegraphics[width=0.32\linewidth]{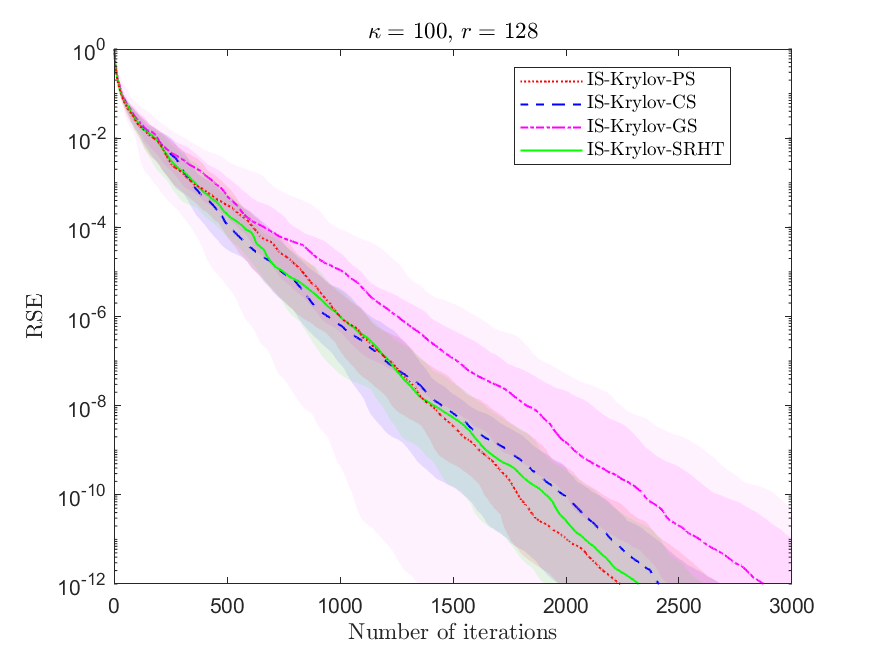}\\
	\includegraphics[width=0.32\linewidth]{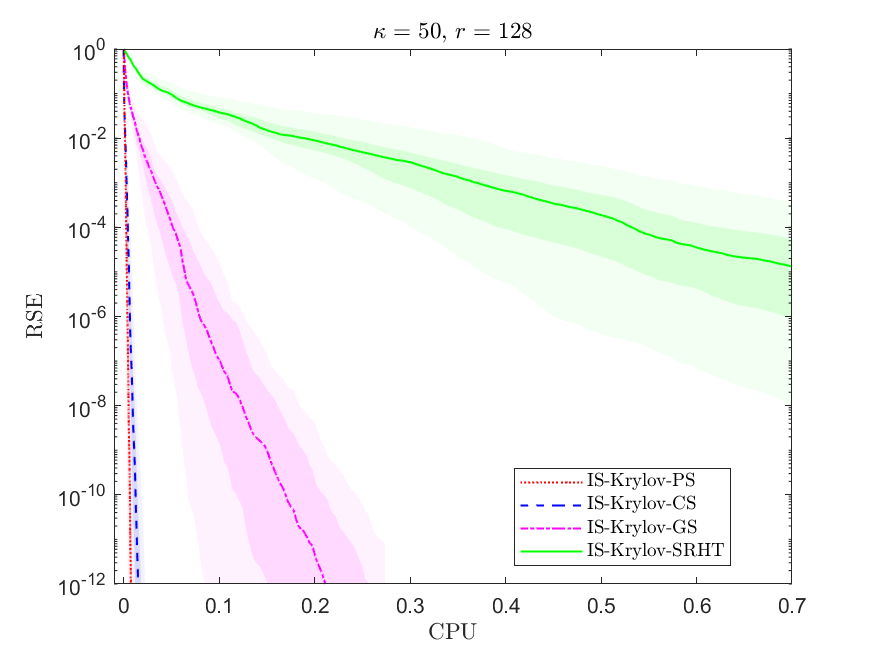}
	\includegraphics[width=0.32\linewidth]{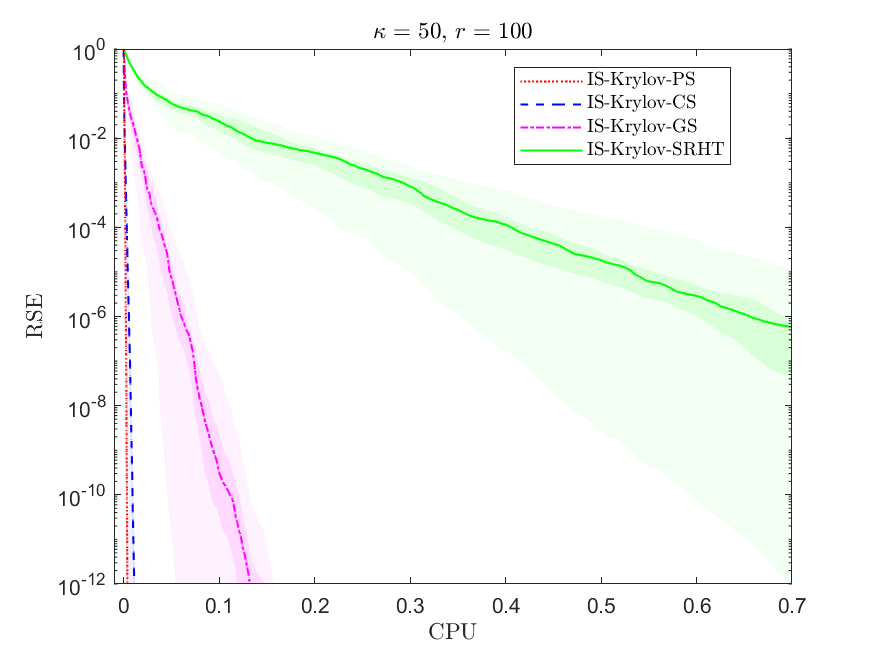}
	\includegraphics[width=0.32\linewidth]{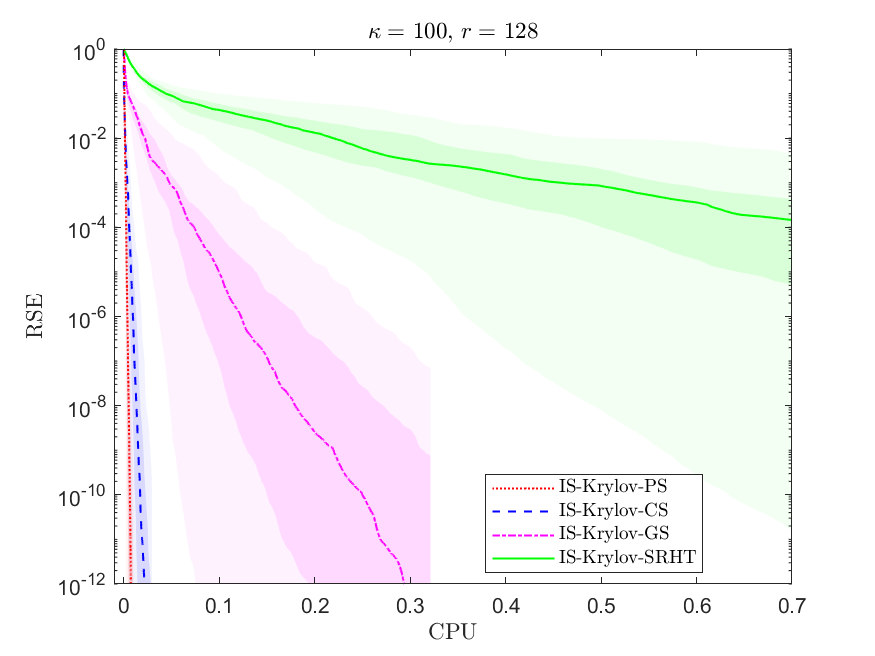}
	\caption{Figures depict the evolution of RSE with respect to the number of iterations (top) and the CPU time (bottom). The title of each plot indicates the values of $\kappa$ and $r$. We set $m=256,n=128,q=30$, and $\ell=10$. All computations are terminated once the number of iterations exceeds a certain limit. }
	\label{fig:2}
\end{figure}

\subsection{Choice of parameters $q$ and $\ell$}

In this subsection, we investigate the impact of the block size \( q \) and the number of previous iterations \( \ell \) on the convergence behavior of the IS-Krylov-PS method. The performance of the algorithms is measured in both the computing time (CPU) and the number of full iterations $(k\cdot\frac{q}{m})$, which  maintains a uniform count of operations for a single pass through the rows of \(A\) across all algorithms. The results are displayed in Figure \ref{fig:3}.

It can be seen from ~\ref{fig:3} that  for any fixed block size $q$, increasing the value of  $\ell$ consistently reduces the required number of full iterations, indicating that incorporating more previous iterations can improve algorithmic efficiency.
When \( \ell \) is fixed, the behavior with respect to \( q \) depends on the value of \( \ell \). Specifically, for relatively small values of \( \ell \) (e.g., \( \ell = 2, 5, 10 \)), the number of full iterations first decreases with increasing \( q \), reaches a minimum, and then increases. In contrast, for larger values of \( \ell \), the number of full iterations tends to increase monotonically with \( q \).
In terms of CPU time, the performance exhibits more nuanced behavior. For instance, when \( \ell = 50 \), although the number of full iterations increases monotonically with \( q \), the corresponding variation in wall-clock time remains relatively modest. This can be attributed to MATLAB’s automatic multithreading in matrix-vector product computations, which dominate the runtime in block sampling-based methods. When both CPU time and the number of full iterations are taken into account, configurations with \( \ell = 10 \) or \( 50 \) and \( q = 32 \) or \( 64 \) demonstrate favorable performance, achieving a desirable balance between convergence speed and computational cost.

\begin{figure}
	\centering
	\includegraphics[width=0.32\linewidth]{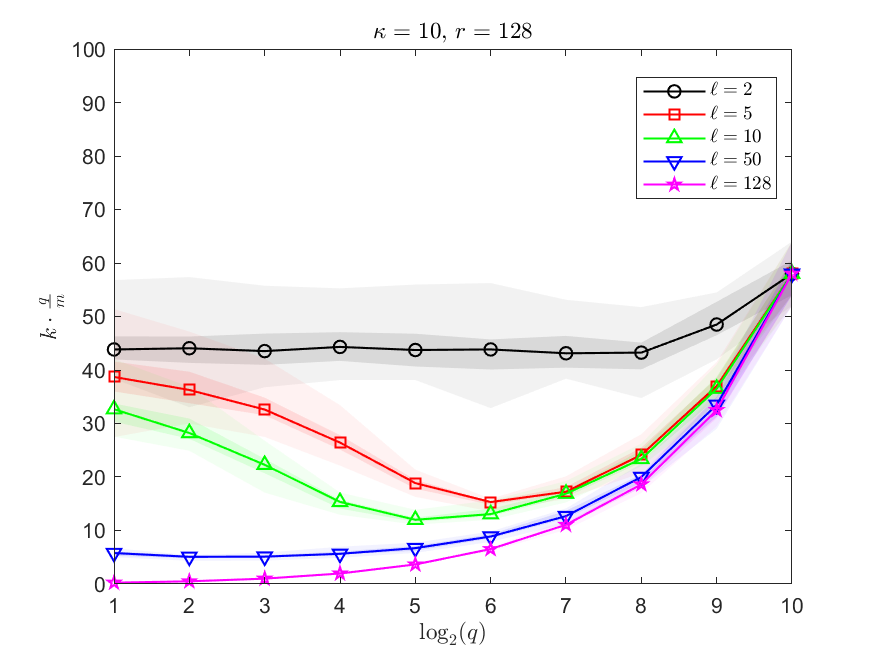}
	\includegraphics[width=0.32\linewidth]{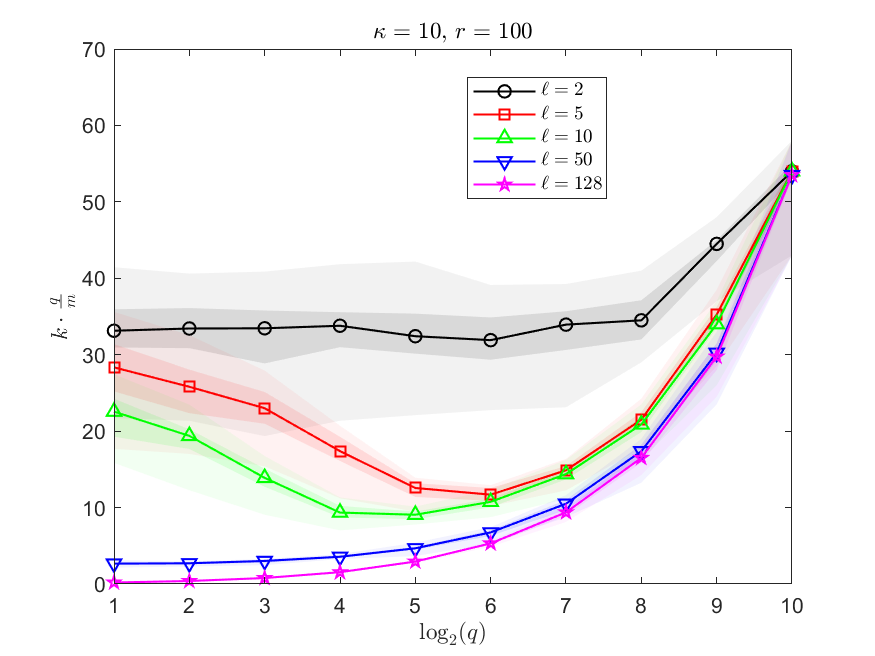}
	\includegraphics[width=0.32\linewidth]{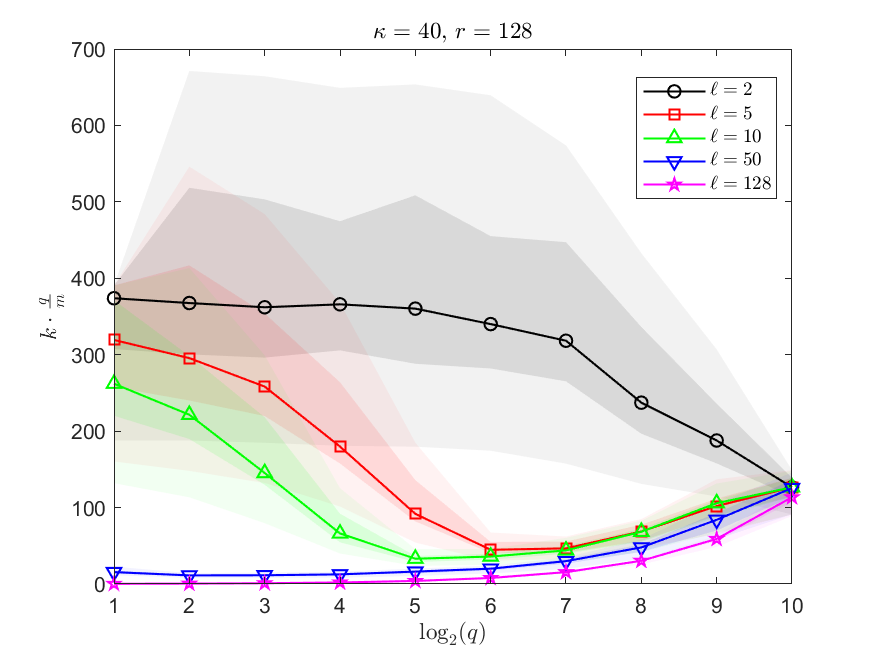}\\
	\includegraphics[width=0.32\linewidth]{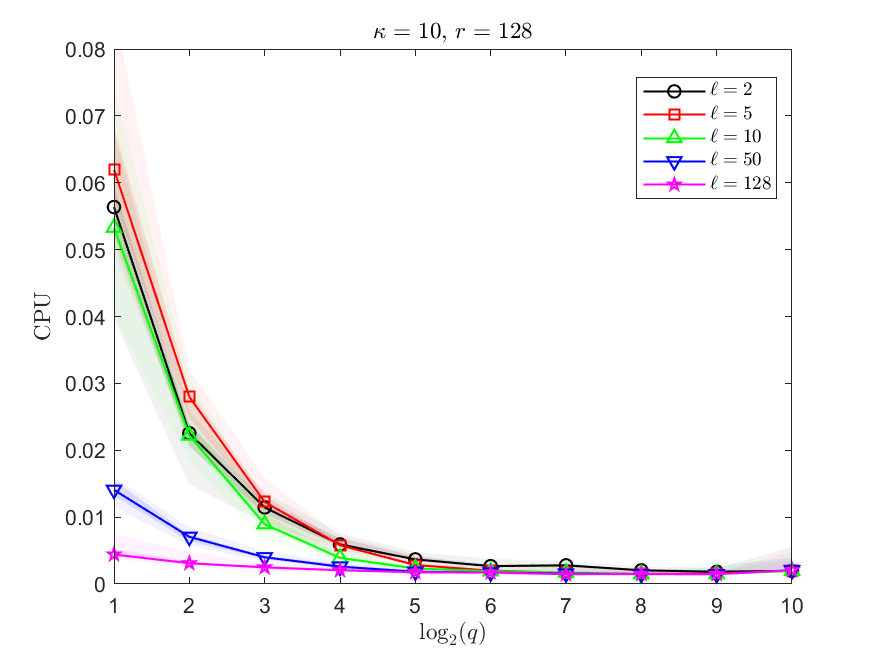}
	\includegraphics[width=0.32\linewidth]{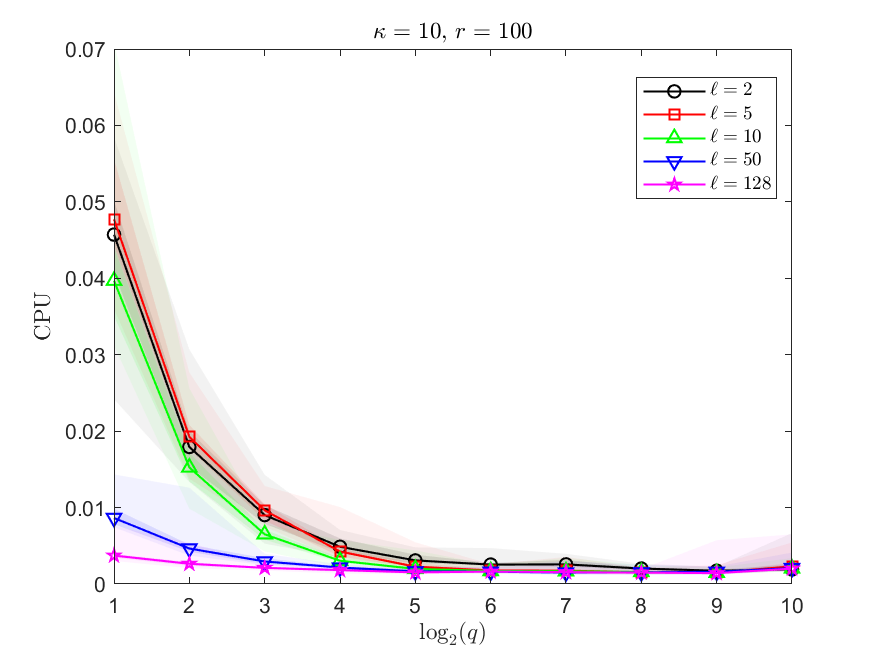}
	\includegraphics[width=0.32\linewidth]{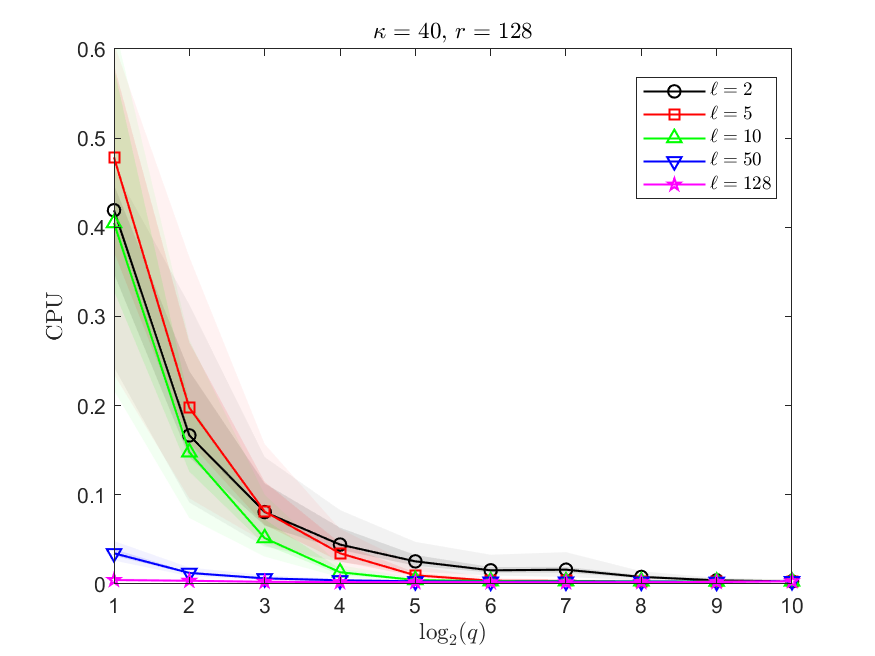}
	\caption{Figures depict the evolution of the number of full iterations (top) and the CPU time (bottom) with respect to the block size $q$ and the number of previous iterations $\ell$. The title of each plot indicates the values of $\kappa$ and $r$. We set $m=1024$ and $n=128$. All computations are terminated once RSE$<{10}^{-12}$.}
	\label{fig:3}
\end{figure}

\subsection{Comparison to RABK and SCGP}
In this section, we compare the performance of the proposed IS-Krylov-PS method with that of RABK and SCGP. Figure \ref{fig:21} presents the experimental results for the case where the coefficient matrices \( A \) are randomized Gaussian matrices.  The results demonstrate that IS-Krylov-PS  outperforms  RABK and SCGP in terms of both iteration counts and CPU time, regardless of the condition number of $A$ and whether it is full rank.

\begin{figure}
\centering
\includegraphics[width=0.32\linewidth]{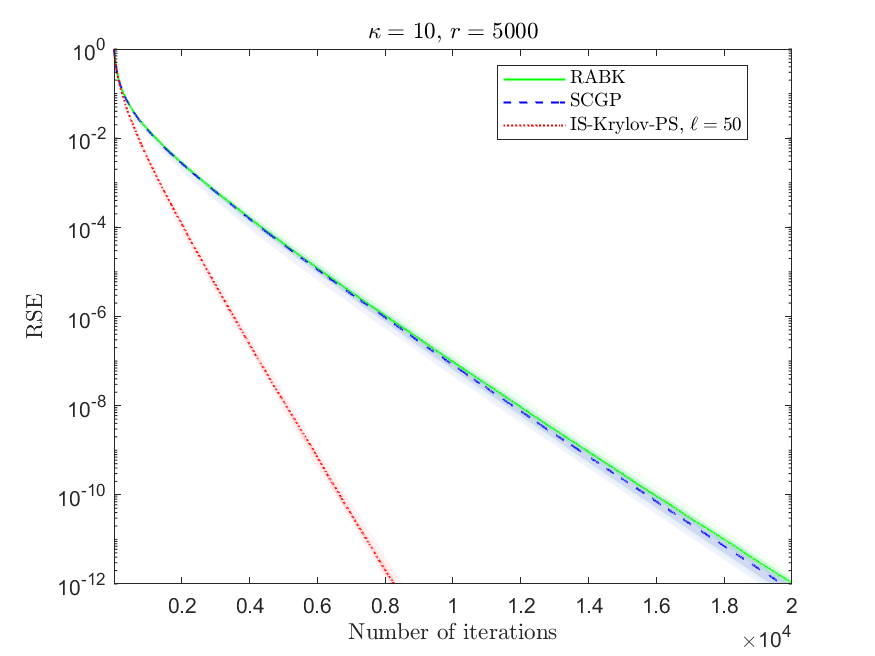}
\includegraphics[width=0.32\linewidth]{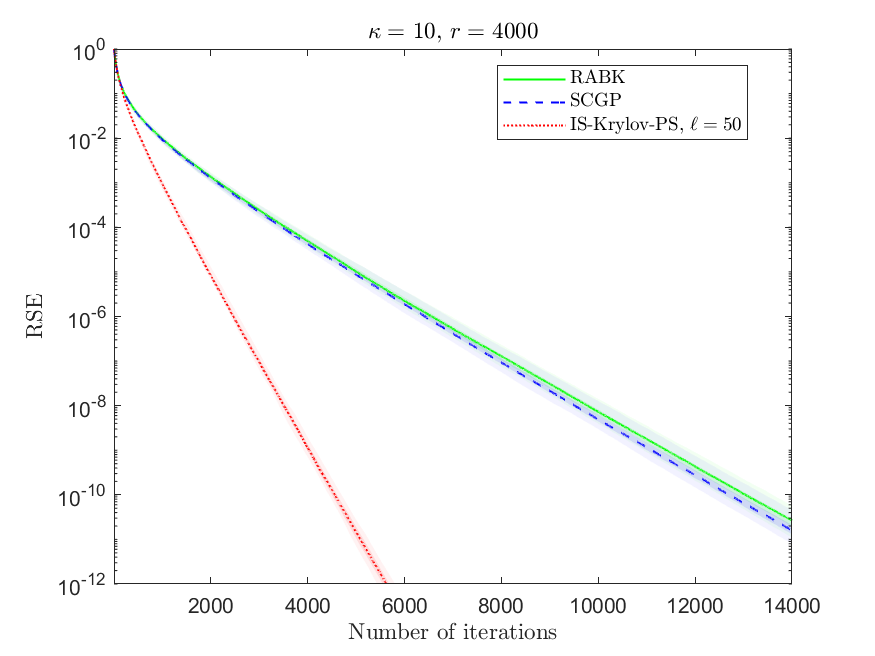}
\includegraphics[width=0.32\linewidth]{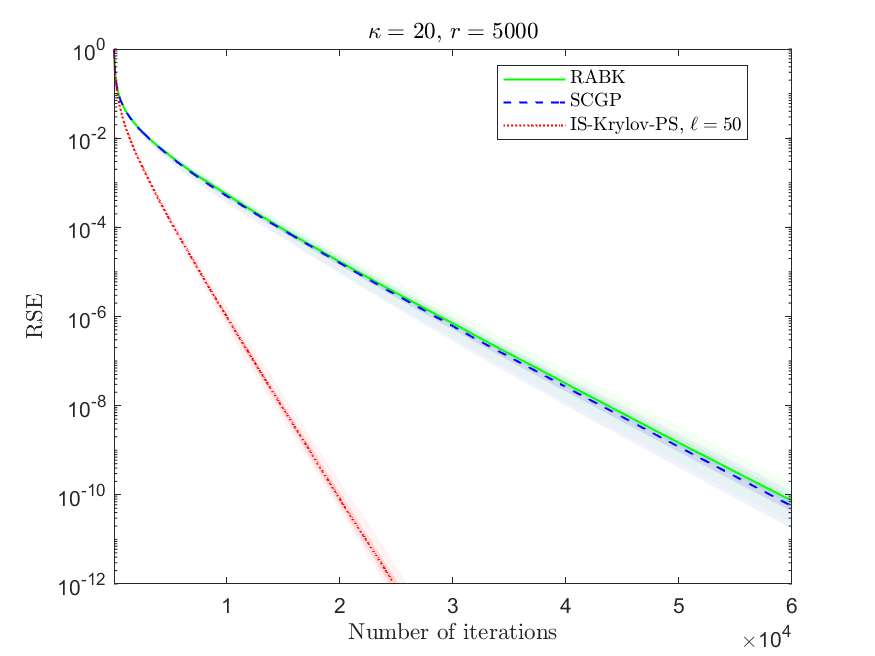}\\
\includegraphics[width=0.32\linewidth]{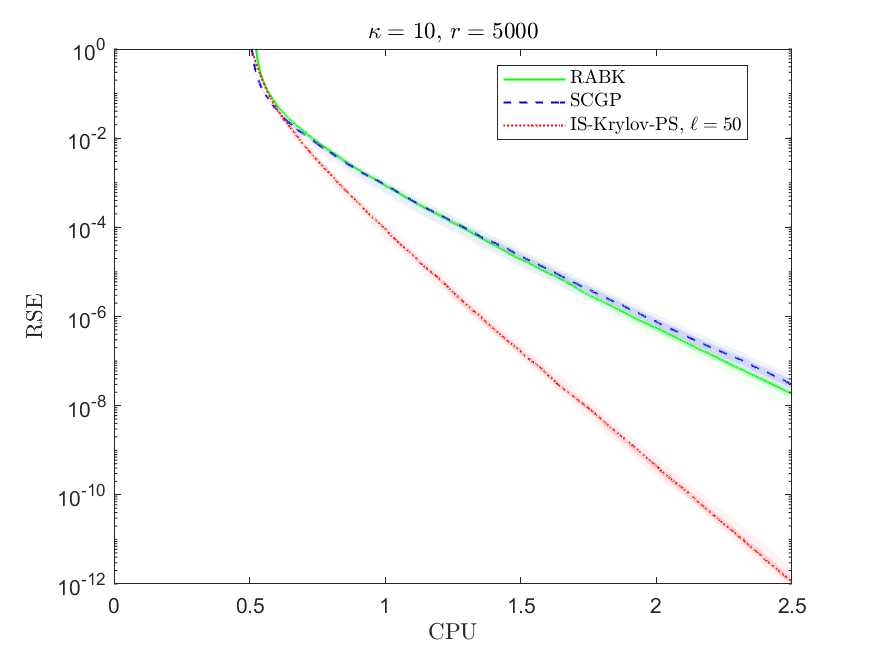}
\includegraphics[width=0.32\linewidth]{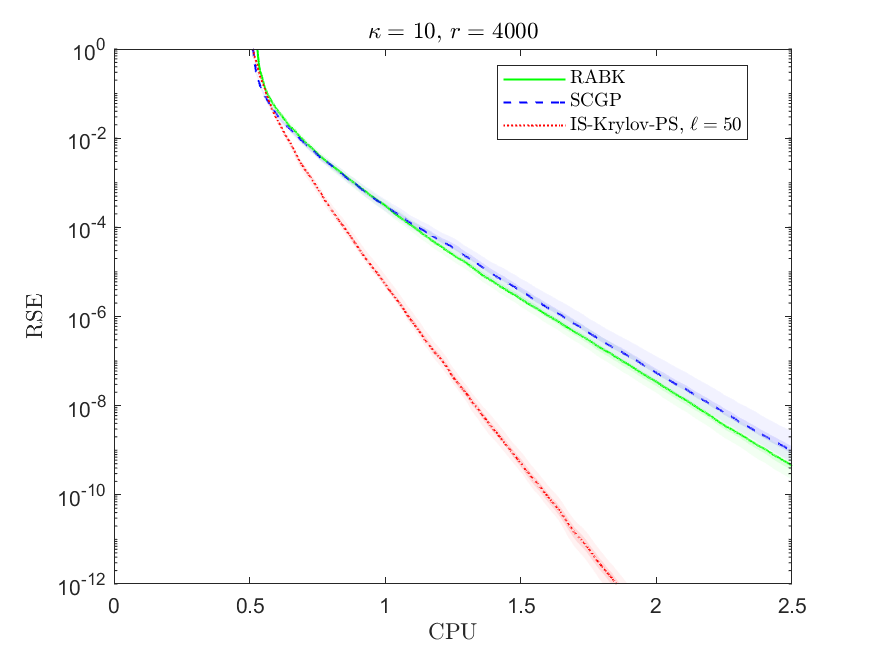}
\includegraphics[width=0.32\linewidth]{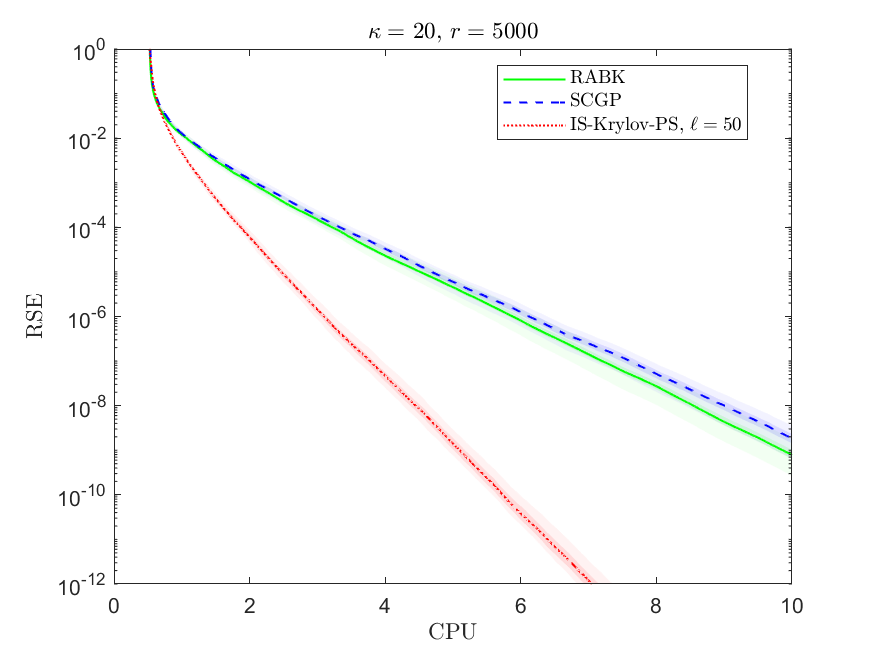} 
\caption{Figures depict the evolution of RSE with respect to the number of iterations (top) and the CPU time (bottom). The title of each plot indicates the values of $\kappa$ and $r$. We set $m=10000,n=5000$, and $q=100$, and for IS-Krylov-PS, we set $\ell=50$. All computations are terminated once the number of iterations exceeds a certain limit.}
\label{fig:21}
\end{figure}

	Table \ref{tab:1} and Figure \ref{fig:4} present the number of  iterations and the computing time for IS-Krylov-PS, RABK, and SCGP when applied to sparse matrices from SuiteSparse Matrix Collection \cite{kolodziej2019suitesparse} and  LIBSVM \cite{chang2011libsvm}. Specifically, the matrices are selected from the SuiteSparse collection:
	{\tt abtaha2}, \texttt{model1}, {\tt crew1}, \texttt{WorldCities}, \texttt{well1033}, {\tt cr42}, \texttt{Franz1}, {\tt GL7d11}, {\tt D\_6}, {\tt rel6}, and {\tt lp\_ship04s}. Additionally, four matrices are taken from LIBSVM:   {\tt a9a}, \texttt{aloi}, \texttt{cod-rna}, and \texttt{protein}. This selection covers a range of matrix properties, including full-rank and rank-deficient, well-conditioned and ill-conditioned,  overdetermined and underdetermined.

\begin{table}[h]
	\centering
	\caption{Average number of iterations and CPU time of RABK, SCGP, and IS-Krylov-PS for solving linear systems with coefficient matrices from the SuiteSparse Matrix Collection~\cite{kolodziej2019suitesparse}.  We set $q=30$ for all methods and $\ell=50$ for IS-Krylov-PS. All computations are terminated once RSE$<10^{-12}$.}
	\resizebox{\textwidth}{!}{
		\begin{tabular}{ccccccccccc}
			\toprule
			\multirow{2}{*}{Matrix} & \multirow{2}{*}{$m \times n$} & \multirow{2}{*}{rank} & \multirow{2}{*}{$\frac{\sigma_{\max}}{\sigma_{\min}}$} & \multicolumn{2}{c}{RABK} & \multicolumn{2}{c}{SCGP} & \multicolumn{2}{c}{IS-Krylov-PS} \\
			\cmidrule(r){5-6} \cmidrule(r){7-8} \cmidrule(r){9-10}
			& & & & Iter & CPU & Iter & CPU & Iter & CPU \\
			\midrule
			abtaha2 & 37932 $\times$ 331 & 331 & $12.22$ & 8505.15 & 0.5303 & 8076.75 & 0.5134 & 1148.35 & \textbf{0.4812} \\
			model1 & 362 $\times$ 798 & 362 & 17.57 & 4947.50 & 0.0304 & 3369.30 & 0.0225 & 787.35 & \textbf{0.0130} \\
			crew1 & 135 $\times$ 6469 & 135 & 18.20 & 1501.65 & 0.1261 & 794.85 & 0.0786 & 185.80 & \textbf{0.0354} \\
			WorldCities & 315 $\times$ 100 & 100 & 66.00 & 10939.90 & 0.0281 & 2607.65 & 0.0082 & 185.30 & \textbf{0.0025} \\
			well1033 & 1033 $\times$ 320 & 320 & 166.13 & 1191857 & 3.6360 & 1035160 & 3.5378 & 12688 & \textbf{0.1250} \\
			cr42 & 905 $\times$ 1513 & 905 & 688.12 & 614421.20 & 3.8735 & 230570.90 & 1.7035 & 12888.80 & \textbf{0.2614} \\
			Franz1 & 2240 $\times$ 768 & 755 & $2.74\text{e}+15$ & 2584.40 &\textbf{ 0.0157 }& 2592.80 & 0.0171 & 1206.75 &0.0187 \\
			GL7d11 & 1019 $\times$ 60 & 60 & $1.65\text{e}+16$ & 364.10 & 0.0021 & 346.85 & 0.0020 & 71.55 & \textbf{0.0019} \\
			D\_6 & 970 $\times$ 435 & 339 & $1.12\text{e}+17$ & 2416.10 & 0.0134 & 2003.85 & 0.0124 & 659.95 & \textbf{0.0101} \\			
			rel6 & 2340 $\times$ 157 & 155 & $1.31\text{e}+17$ & 2568.85 & 0.0101 & 2246.00 & 0.0101 & 373.75 & \textbf{0.0054} \\
			lp\_ship04s & 402 $\times$ 1506 & 360 & $2.99\text{e}+17$ & 243593.80 & 2.5658 & 187281.00 & 2.1739 & 4167.05 & \textbf{0.1039} \\
			\bottomrule
		\end{tabular}
	}
	\label{tab:1}
\end{table}

\begin{figure}
	\centering
	\includegraphics[width=0.32\linewidth]{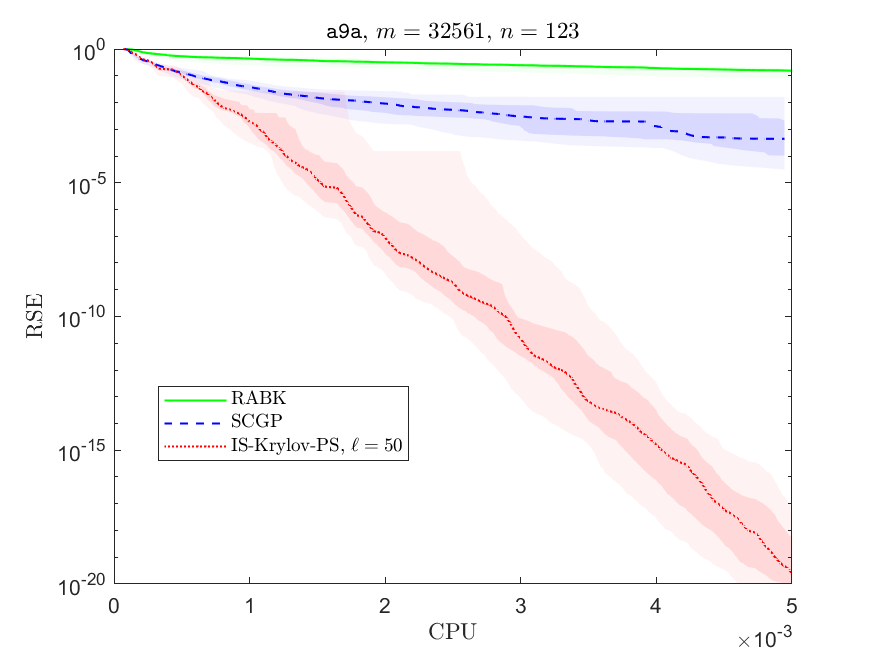}
	\includegraphics[width=0.32\linewidth]{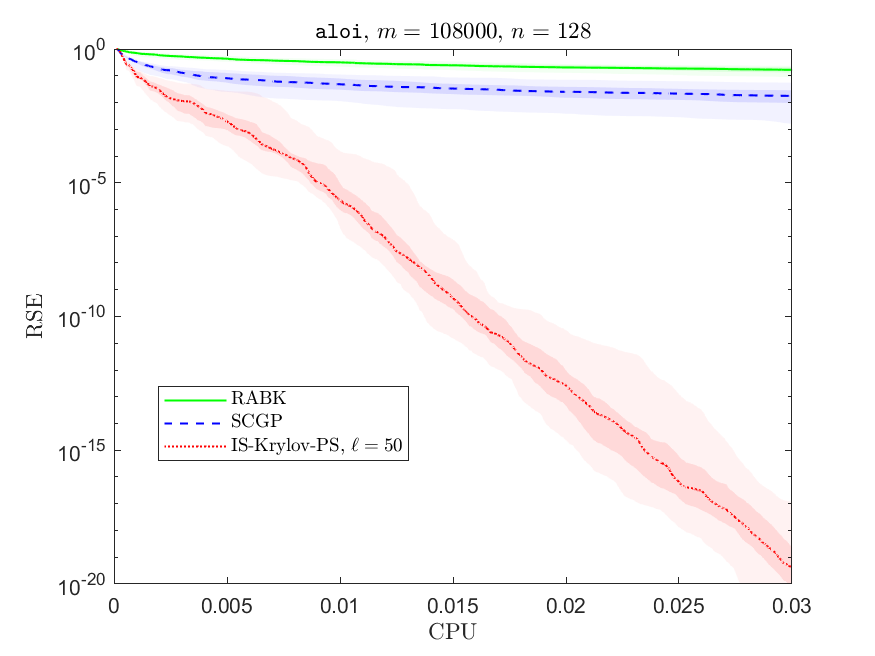}\\
	\includegraphics[width=0.32\linewidth]{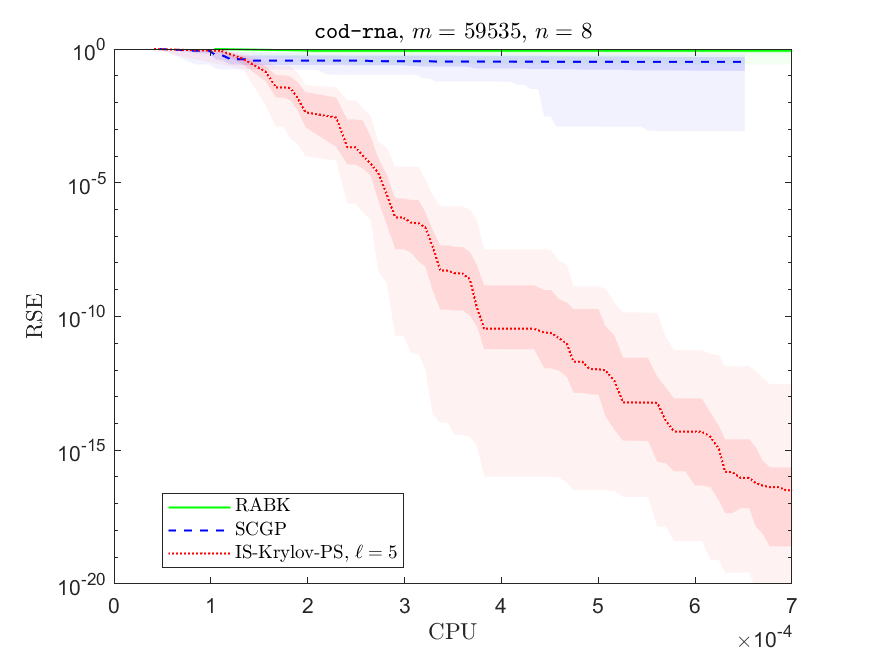}
	\includegraphics[width=0.32\linewidth]{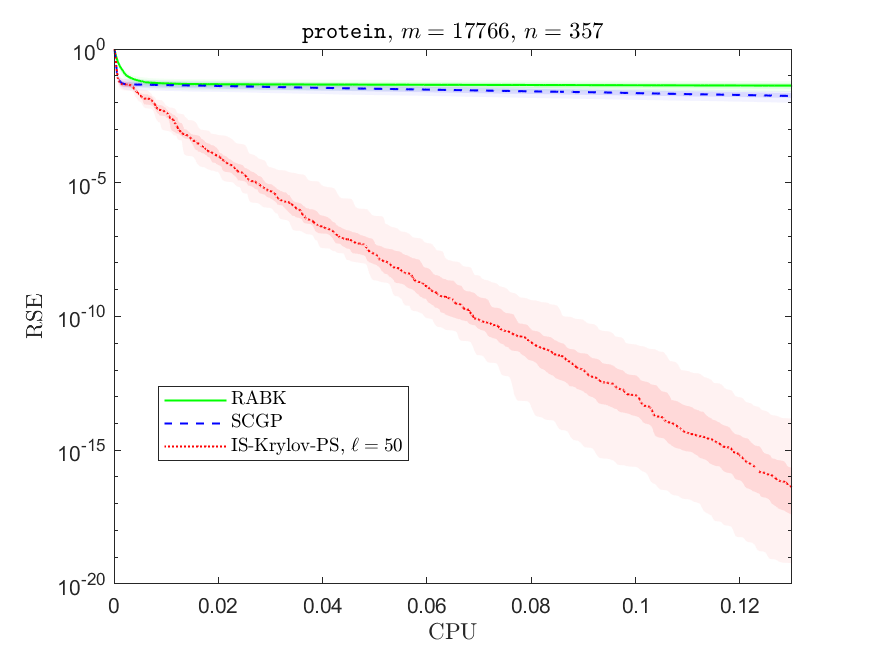}
	\caption{Performance of RABK, SCGP, and IS-Krylov-PS for linear systems with coefficient matrices from LIBSVM \cite{chang2011libsvm}. Figures depict the evolution of RSE with respect to the number of iterations and the CPU time. Each plot title indicates the dataset name and data dimensions. We set $q=300$ and stop the algorithms if the number of iterations exceeds a certain limit.}
	\label{fig:4}
\end{figure}

Table~\ref{tab:1} reports the numerical results on matrices from the SuiteSparse Matrix Collection. It can be seen that IS-Krylov-PS consistently outperforms both RABK and SCGP in terms of iteration count across all test cases. However, in certain instances, such as \texttt{Franz1}, SCGP achieves lower CPU time than IS-Krylov-PS. This can be attributed to the higher per-iteration computational cost of IS-Krylov-PS compared to SCGP.  Figure~\ref{fig:4} presents the results on matrices from LIBSVM. The results further demonstrate the efficiency of IS-Krylov-PS.

\subsection{Comparison to \texttt{pinv} and \texttt{lsqminorm}}

In this subsection, we compare the performance of IS-Krylov-PS with the MATLAB functions \texttt{pinv} and \texttt{lsqminnorm}. To facilitate the computation of the least-norm solution, we consider randomly generated matrices with full column rank, where \( m \geq n \) and the numerical rank \( r = n \). We construct the exact solution vector \( x^* \) by setting \( x^* = \texttt{randn(n,1)} \), and compute the right-hand side as \( b = A x^* \). This construction ensures that \( x^* \) is the unique minimum-norm solution of the resulting linear system.

We terminate IS-Krylov-PS once the accuracy of its approximate solution matches that of the solutions computed by \texttt{pinv} and \texttt{lsqminnorm}. Figure~\ref{fig:5} presents the CPU time as a function of the number of rows in the matrix \( A \), with the number of columns held fixed. The results indicate that for smaller problem sizes, \texttt{pinv} and \texttt{lsqminnorm} are more efficient than IS-Krylov-PS. However, as the size of \( A \) increases, IS-Krylov-PS demonstrates significantly better computational efficiency,
 highlighting  underscoring its scalability and suitability for large-scale problems.
\begin{figure}
	\centering
	 \includegraphics[width=0.32\linewidth]{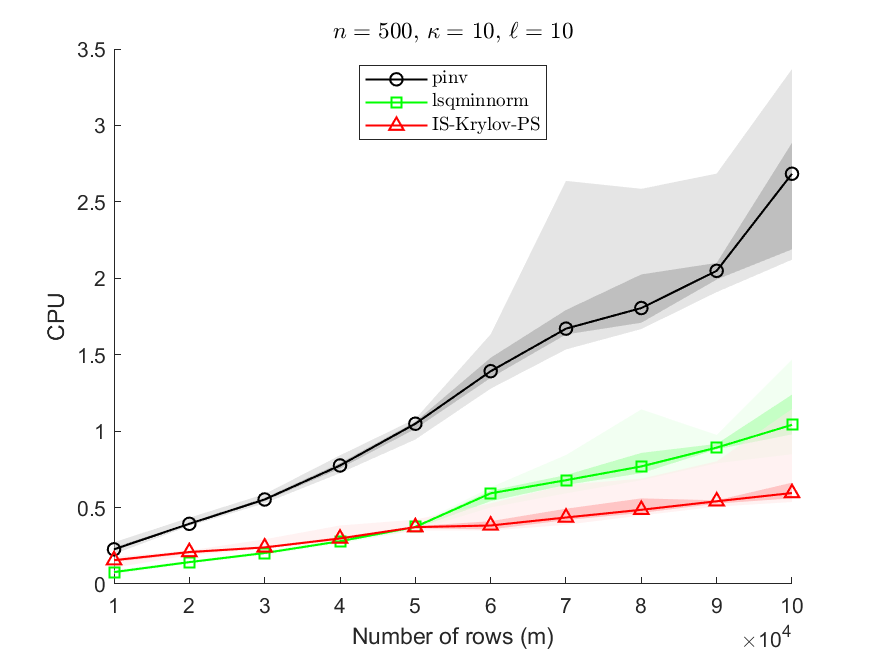}
	\includegraphics[width=0.32\linewidth]{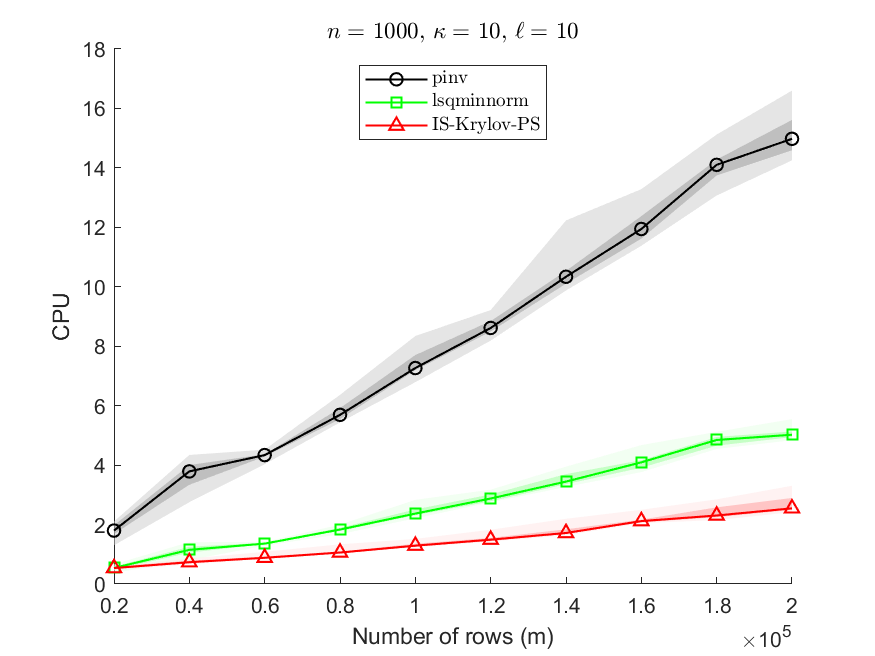}
	\includegraphics[width=0.32\linewidth]{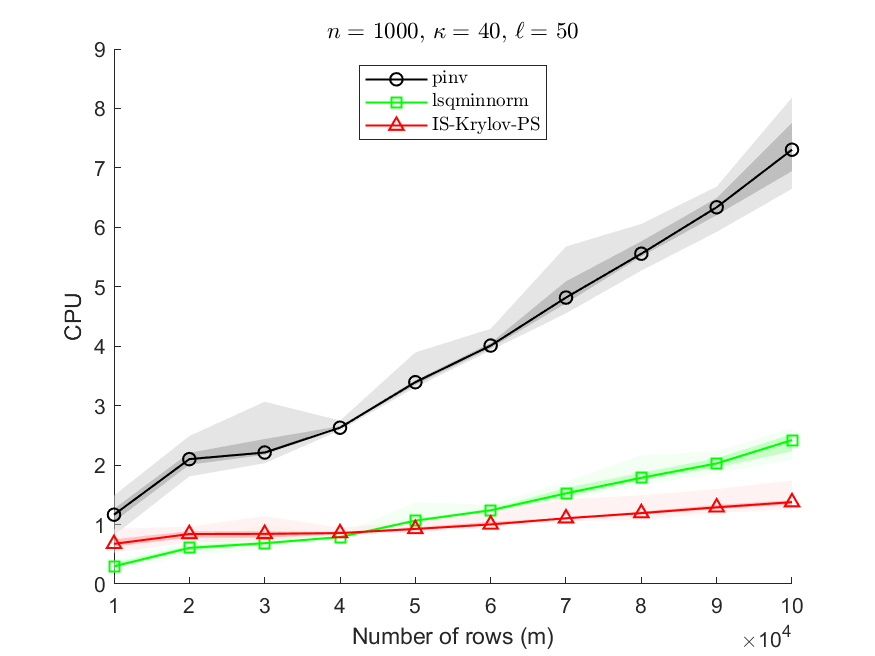}
	\caption{Figures depict the CPU time (in seconds) versus increasing number of rows. The title of each plot indicates the values of $n,\kappa$ and $\ell$. We set $q=50$. }
	\label{fig:5}
\end{figure}

\section{Concluding Remarks}\label{sec-concluding}

We introduced the affine subspace search into randomized iterative methods, establishing a new paradigm for solving linear systems. We proved that as the dimension of the affine subspace increases, the convergence factor decreases, leading to improved convergence results. When the subspace dimension becomes sufficiently large and the algorithm reduces to its deterministic form, it precisely coincides with Krylov subspace methods. Thus, we established a connection between randomized iterative methods and Krylov subspaces. We derived efficient implementation schemes for the algorithm and proposed a novel iterative-sketching-based Krylov subspace method. Numerical results confirmed the efficiency of our proposed method.

There are still many possible avenues for future research. For instance, theoretically determining the number of previous iterates \(\ell\). The randomized coordinate descent (RCD) method \cite{Lev10} could be seen as the dual form of the randomized Kaczmarz algorithm, capable of solving least squares problems regardless of whether the linear system is consistent or inconsistent. Establishing the connection between RCD and Krylov subspace methods is also a valuable topic.

\bibliographystyle{abbrv}
\bibliography{ref}
\section{Appendix. Proof of the main results}
\label{sec:appd}

\subsection{Proof of Lemma \ref{lemma-effup}}

The following  lemma is useful for proving Lemma \ref{lemma-effup}.
\begin{lemma}[\cite{rieger2023generalized}, Lemma 12]
	\label{lemma-inv}
	Suppose the matrix $ B=\sum_{j=1}^n\alpha_j\mathbbm{1}_n^j(\mathbbm{1}_n^j)^\top$ with $\alpha_j\neq0$ for $j=1,\ldots,n$. Then the matrix \(C(\alpha_1, \ldots, \alpha_n)\), as defined in equation \eqref{matrixC}, is the inverse of \(B\).

\end{lemma}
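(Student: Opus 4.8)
The plan is to exhibit a structured factorization of $B$ and invert it directly. The key observation is that the vectors $\mathbbm{1}_n^1,\ldots,\mathbbm{1}_n^n$ are precisely the columns of the upper-triangular all-ones matrix. Concretely, let $U\in\mathbb{R}^{n\times n}$ be defined by $U_{ij}=1$ if $i\le j$ and $U_{ij}=0$ otherwise, so that the $j$-th column of $U$ equals $\mathbbm{1}_n^j=Ue_n^j$. Writing $D=\diag(\alpha_1,\ldots,\alpha_n)$, one then has
\[
B=\sum_{j=1}^n\alpha_j\,\mathbbm{1}_n^j(\mathbbm{1}_n^j)^\top=\sum_{j=1}^n\alpha_j(Ue_n^j)(Ue_n^j)^\top=U\Big(\sum_{j=1}^n\alpha_j e_n^j(e_n^j)^\top\Big)U^\top=UDU^\top.
\]
Since every $\alpha_j\neq0$, the diagonal matrix $D$ is invertible, and $U$ is invertible (upper triangular with unit diagonal); hence $B$ is invertible with $B^{-1}=(U^{-1})^\top D^{-1}U^{-1}$.

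Next I would identify $U^{-1}$ explicitly. A direct check shows that $U^{-1}$ is the upper bidiagonal matrix with $1$ on the main diagonal and $-1$ on the first superdiagonal; equivalently $U^{-1}e_n^k=e_n^k-e_n^{k-1}$ (with the convention $e_n^0=0$), which indeed yields $U(e_n^k-e_n^{k-1})=\mathbbm{1}_n^k-\mathbbm{1}_n^{k-1}=e_n^k$. It then remains to compute the $(i,k)$ entry of $(U^{-1})^\top D^{-1}U^{-1}$ as the inner product
\[
\big((U^{-1})^\top D^{-1}U^{-1}\big)_{ik}=(e_n^i-e_n^{i-1})^\top D^{-1}(e_n^k-e_n^{k-1}).
\]
Expanding this and using $D^{-1}e_n^j=\alpha_j^{-1}e_n^j$ gives $\alpha_{k-1}^{-1}+\alpha_k^{-1}$ on the diagonal $i=k$ (under the convention $\alpha_0^{-1}:=0$, so that the $(1,1)$ entry reduces to $\alpha_1^{-1}$), the value $-\alpha_p^{-1}$ on the two symmetric entries $(p,p+1)$ and $(p+1,p)$ for $1\le p\le n-1$, and $0$ whenever $|i-k|\ge2$. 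This is exactly the tridiagonal matrix $C(\alpha_1,\ldots,\alpha_n)$ of \eqref{matrixC}, establishing $B^{-1}=C(\alpha_1,\ldots,\alpha_n)$.

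The argument is elementary once the factorization $B=UDU^\top$ is in hand, so I do not anticipate a genuine obstacle. The only point demanding care is the bookkeeping at the boundary indices in the entrywise expansion (the conventions $e_n^0=0$ and $\alpha_0^{-1}=0$), which must be handled so that the first diagonal entry collapses to $\alpha_1^{-1}$ rather than $\alpha_0^{-1}+\alpha_1^{-1}$. An alternative route that avoids computing $U^{-1}$ is to verify $C(\alpha_1,\ldots,\alpha_n)\,B=I$ directly from the closed form $B_{ik}=\sum_{j\ge\max(i,k)}\alpha_j$ together with the tridiagonal structure of $C$; there the relevant sums telescope, but this demands noticeably more index tracking than the factorization argument.
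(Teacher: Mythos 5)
Your proof is correct. Note that the paper does not actually prove this lemma itself — it imports it verbatim as Lemma 12 of the cited reference — so there is no in-paper argument to compare against; your write-up supplies a complete, self-contained justification. The factorization $B=UDU^\top$ with $U$ the upper-triangular all-ones matrix is exactly right (since $Ue_n^j=\mathbbm{1}_n^j$), the identification $U^{-1}e_n^k=e_n^k-e_n^{k-1}$ is verified correctly, and the entrywise expansion
\[
\bigl((U^{-1})^\top D^{-1}U^{-1}\bigr)_{ik}=\alpha_i^{-1}\delta_{ik}-\alpha_i^{-1}\delta_{i,k-1}-\alpha_{i-1}^{-1}\delta_{i-1,k}+\alpha_{i-1}^{-1}\delta_{i-1,k-1}
\]
(with $\alpha_0^{-1}:=0$) reproduces precisely the tridiagonal matrix $C(\alpha_1,\ldots,\alpha_n)$ of \eqref{matrixC}: diagonal entries $\alpha_{i-1}^{-1}+\alpha_i^{-1}$ collapsing to $\alpha_1^{-1}$ at $i=1$, off-diagonal entries $-\alpha_p^{-1}$ at $(p,p+1)$ and $(p+1,p)$, and zeros for $|i-k|\ge 2$. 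The boundary conventions are handled correctly, so there is no gap. Your remark that one could instead verify $C(\alpha_1,\ldots,\alpha_n)B=I$ directly from $B_{ik}=\sum_{j\ge\max(i,k)}\alpha_j$ is also accurate; the factorization route you chose is cleaner and additionally makes the positive definiteness of $B$ (used elsewhere in the paper for $V_k^\top V_k$) transparent when the $\alpha_j$ are positive.
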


Now, we are ready to prove  Lemma \ref{lemma-effup}.
\begin{proof}[Proof of Lemma \ref{lemma-effup}]
From Lemma \ref{lemma-inv}, we know that to prove the inverse matrix of \(V_k^\top V_k\) can be expressed as
$
C_k = C(\gamma_{j_k}\underline{s_{j_k}}, \ldots, \gamma_{k-1}\underline{s_{k-1}}) \in \mathbb{R}^{(k-j_k) \times (k-j_k)},
$
it is sufficient to prove that
	$$
			V_{k}^\top V_{k}=\sum_{i=1}^{k-j_{k}}\gamma_{j_{k}+i-1}\underline{s_{j_{k}+i-1}}\mathbbm{1}_{k-j_{k}}^{i}(\mathbbm{1}_{k-j_{k}}^{i})^\top.
		$$
Our proof can be classified into two cases: $j_k = j_{k+1} = 0$ and $j_k < j_{k+1}$.
		
{\bf Case 1} ($j_k = j_{k+1} = 0$). In this case, we know that now $k \leq \ell-2$. If $k=0$, we have $V_0=\emptyset$ and $V_{1}=\begin{pmatrix}
		x^0-x^1
	\end{pmatrix}$. Then $V_1^\top V_1=\lVert x^0-x^1\rVert_2^2=\lVert M_0s_0\rVert_2^2=\gamma_0\underline{s_0}$.
	
	If $1\leq k\leq \ell-2$, we have $V_k = (x^0 - x^k, x^1 - x^k, \ldots, x^{k-1} - x^k) \in \mathbb{R}^{n \times k}$, $M_k=\begin{pmatrix}
		V_k,-A^\top S_kS_k^\top(Ax^k-b)
	\end{pmatrix}\in \mathbb{R}^{n \times (k+1)}$, and $V_{k+1} = (x^0 - x^{k+1}, x^1 - x^{k+1}, \ldots, x^{k} - x^{k+1}) \in \mathbb{R}^{n \times (k+1)}$. 
	For the first $k$-order principal submatrix of $V_{k+1}^\top V_{k+1}$ $(i,j = 1,2,\ldots,k)$, we have
	\begin{equation*}
		\begin{aligned}
			(V_{k+1}^\top V_{k+1})_{i,j}&=\langle x^{i-1}-x^{k+1},x^{j-1}-x^{k+1}\rangle=\langle x^{i-1}-x^k-M_ks_k,x^{j-1}-x^k-M_ks_k\rangle\\
			&=\langle x^{i-1}-x^k,x^{j-1}-x^k\rangle-(x^{i-1}-x^k)^\top M_ks_k-(x^{j-1}-x^k)^\top M_ks_k+s_k^\top M_k^\top M_ks_k\\
			&=\langle x^{i-1}-x^k,x^{j-1}-x^k\rangle-(e^i_{k+1})^\top M_k^\top M_ks_k-(e^j_{k+1})^\top M_k^\top M_ks_k+\gamma_ks_k^\top e_{k+1}^{k+1}\\
			&\overset{(a)}{=}\langle x^{i-1}-x^k,x^{j-1}-x^k\rangle-\gamma_k(e^i_{k+1})^\top e_{k+1}^{k+1}-\gamma_k(e^j_{k+1})^\top e_{k+1}^{k+1}+\gamma_k\underline{s_k}\\
			&=\langle x^{i-1}-x^k,x^{j-1}-x^k\rangle+\gamma_k\underline{s_k},\\
		\end{aligned}
	\end{equation*}
	where  $(a)$ follows from  \eqref{eq-s_k}, i.e., $M_k^\top M_ks_k=\gamma_ke_{k+1}^{k+1}$.
	For the last row of $V_{k+1}^\top V_{k+1}$ $(j=1,2,\ldots,k)$, we have
	\begin{equation*}
		\begin{aligned}
			(V_{k+1}^\top V_{k+1})_{k+1,j}&=\langle x^k-x^{k+1},x^{j-1}-x^{k+1}\rangle=-\langle M_ks_k,x^{j-1}-x^k-M_ks_k\rangle\\
			&=-(x^{j-1}-x^k)^\top M_ks_k+s_k^\top M_k^\top M_ks_k=-(e^j_{k+1})^\top M_k^\top M_ks_k+\gamma_ks_k^\top e_{k+1}^{k+1}\\
			&\overset{(a)}{=}-\gamma_k(e^{j}_{k+1})^\top e_{k+1}^{k+1}+\gamma_k\underline{s_k}=\gamma_k\underline{s_k}.\\
		\end{aligned}
	\end{equation*}
	where $(a)$ follows from   \eqref{eq-s_k}, and  
	$$
	(V_{k+1}^\top V_{k+1})_{k+1,k+1}=\|x^{k+1}-x^k\|^2_2=s_k^\top M_k^\top M_ks_k=\gamma_k\underline{s_k}.
	$$
	Hence, we have 
	\begin{equation}\label{proof-xie-511}
		\begin{aligned}
			V_{k+1}^\top V_{k+1}&=\begin{pmatrix}
				V_k^\top V_k&0_{k\times 1}\\
				0_{1\times k}&0
			\end{pmatrix}+\gamma_k\underline{s_k}\mathbbm{1}_{k+1}^{k+1}(\mathbbm{1}_{k+1}^{k+1})^\top\\
			&=\begin{pmatrix}
				V_{k-1}^\top V_{k-1}&0_{(k-1)\times 2}\\
				0_{2\times (k-1)}&0_{2\times 2}
			\end{pmatrix}+\gamma_{k-1}\underline{s_{k-1}}\mathbbm{1}^{k}_{k+1}(\mathbbm{1}^{k}_{k+1})^\top+\gamma_k\underline{s_k}\mathbbm{1}_{k+1}^{k+1}(\mathbbm{1}_{k+1}^{k+1})^\top\\
			&=\begin{pmatrix}
				V_1^\top V_1&0_{1\times k}\\
				0_{k\times 1}&0_{k\times k}
			\end{pmatrix}+\gamma_1\underline{s_1}\mathbbm{1}_{k+1}^{2}(\mathbbm{1}_{k+1}^{2})^\top+\cdots+\gamma_k\underline{s_k}\mathbbm{1}_{k+1}^{k+1}(\mathbbm{1}_{k+1}^{k+1})^\top\\
			&=\sum_{i=1}^{k+1}\gamma_{i-1}\underline{s_{i-1}}\mathbbm{1}_{k+1}^{i}(\mathbbm{1}_{k+1}^{i})^\top
		\end{aligned}
	\end{equation}
	as desired.
	
{\bf Case 2} ($j_k < j_{k+1}$). In this case, we know that $k > \ell-2$, and  we have $V_k =\begin{pmatrix}
		x^{j_k} - x^k, \ldots, x^{k-1} - x^k
	\end{pmatrix} \in \mathbb{R}^{n \times (k-j_k)}$, 
	$M_k=\begin{pmatrix}
		V_k,-A^\top S_kS_k^\top(Ax^k-b)
	\end{pmatrix}\in \mathbb{R}^{n \times (k-j_k+1)}$
	and 
	$V_{k+1} = \begin{pmatrix}
		x^{j_{k+1}} - x^{k+1},\ldots, x^{k} - x^{k+1}
	\end{pmatrix} \in \mathbb{R}^{n \times (k-j_k)}$. 
	We define $\hat{V}_{k+1} := \begin{pmatrix}
		x^{j_k} - x^{k+1}, V_{k+1}
	\end{pmatrix} \in \mathbb{R}^{n \times (k-j_k+1)}$. 
	For the first $(k-j_k)$-order principal submatrix of $\hat{V}_{k+1}^\top \hat{V}_{k+1}$, $(i,j = 1,2,\ldots,k-j_k)$, we have
	\begin{equation*}
		\begin{aligned}
			&(\hat{V}_{k+1}^\top \hat{V}_{k+1})_{ij}=\langle x^{j_k+i-1}-x^{k+1},x^{j_k+j-1}-x^{k+1}\rangle\\
			&=\langle x^{j_k+i-1}-x^k-M_ks_k,x^{j_k+j-1}-x^k-M_ks_k\rangle\\
			&=\langle x^{j_k+i-1}-x^k,x^{j_k+j-1}-x^k\rangle-(x^{j_k+i-1}-x^k)^\top M_ks_k-(x^{j_k+j-1}-x^k)^\top M_ks_k+s_k^\top M_k^\top M_ks_k\\
			&=\langle x^{j_k+i-1}-x^k,x^{j_k+j-1}-x^k\rangle-(e^{i}_{k-j_k+1})^\top M_k^\top M_ks_k-(e^{j}_{k-j_k+1})^\top M_k^\top M_ks_k+\gamma_ks_k^\top e_{k-j_k+1}^{k-j_k+1}\\
			&\overset{(a)}{=}\langle x^{j_k+i-1}-x^k,x^{j_k+j-1}-x^k\rangle-\gamma_k(e^{i}_{k-j_k+1})^\top e_{k-j_k+1}^{k-j_k+1}-\gamma_k(e^{j}_{k-j_k+1})^\top e_{k-j_k+1}^{k-j_k+1}+\gamma_k\underline{s_k}\\
			&=\langle x^{j_k+i-1}-x^k,x^{j_k+j-1}-x^k\rangle+\gamma_k\underline{s_k},
		\end{aligned}
	\end{equation*}
		where  $(a)$ follows from  \eqref{eq-s_k}.
	For the last row of $\hat{V}_{k+1}^\top \hat{V}_{k+1}$ $(j=1,2,\ldots,k-j_k)$, we have
	\begin{equation*}
		\begin{aligned}
			(\hat{V}_{k+1}^\top \hat{V}_{k+1})_{k-j_k+1,j}&=\langle x^k-x^{k+1},x^{j_k+j-1}-x^{k+1}\rangle=-\langle M_ks_k,x^{j_k+j-1}-x^k-M_ks_k\rangle\\
			&=-(x^{j_k+j-1}-x^k)^\top M_ks_k+s_k^\top M_k^\top M_ks_k\\
			&=-(e^{j}_{k-j_k+1})^\top M_k^\top M_ks_k+\gamma_ks_k^\top e_{k-j_k+1}^{k-j_k+1}\\
			&=-\gamma_k(e^{j}_{k-j_k+1})^\top e_{k-j_k+1}^{k-j_k+1}+\gamma_k\underline{s_k}\\&
			=\gamma_k\underline{s_k}
		\end{aligned}
	\end{equation*}
	and 
	$$
	(\hat{V}_{k+1}^\top \hat{V}_{k+1})_{k-j_k+1,k-j_k+1}=\|x^{k+1}-x^k\|^2_2=s_k^\top M_k^\top M_ks_k=\gamma_k\underline{s_k}.
	$$
	
	 Consider the case where $k=\ell-1$. From \eqref{proof-xie-511}, we have $V_k^\top V_k=\sum_{i=1}^{k}\gamma_{i-1}\underline{s_{i-1}}\mathbbm{1}_{k}^{i}(\mathbbm{1}_{k}^{i})^\top$ Note that now $j_k=0$, we can get 
	\begin{equation*}
		\begin{aligned}
			\hat{V}_{k+1}^\top \hat{V}_{k+1}&=\begin{pmatrix}
				V_k^\top V_k&0_{(k-j_k)\times 1}\\
				0_{1\times (k-j_k)}&0
			\end{pmatrix}+\gamma_k\underline{s_k}\mathbbm{1}_{k-j_k+1}^{k-j_k+1}(\mathbbm{1}_{k-j_k+1}^{k-j_k+1})^\top\\
			&=\sum_{i=1}^{k-j_{k}}\gamma_{j_{k}+i-1}\underline{s_{j_{k}+i-1}}\begin{pmatrix}
				\mathbbm{1}_{k-j_{k}}^{i}\\
				0
			\end{pmatrix}\begin{pmatrix}
				(\mathbbm{1}_{k-j_{k}}^{i})^\top&0
			\end{pmatrix}+\gamma_k\underline{s_k}\mathbbm{1}_{k-j_k+1}^{k-j_k+1}(\mathbbm{1}_{k-j_k+1}^{k-j_k+1})^\top\\
			&=\sum_{i=1}^{k-j_{k}+1}\gamma_{j_{k}+i-1}\underline{s_{j_{k}+i-1}}\mathbbm{1}_{k-j_{k}+1}^{i}(\mathbbm{1}_{k-j_{k}+1}^{i})^\top,\\
		\end{aligned}
	\end{equation*}
	On the other hand, since $\hat{V}_{k+1}^\top \hat{V}_{k+1}$ can also be expressed as
	$$\hat{V}_{k+1}^\top \hat{V}_{k+1}=\begin{pmatrix}
		\lVert x^{j_k}-x^{k+1}\rVert_2^2&(x^{j_k}-x^{k+1})^\top V_{k+1}\\
		V_{k+1}^\top(x^{j_k}-x^{k+1})&V_{k+1}^\top V_{k+1},
	\end{pmatrix}$$ by component-wise comparison, we have
	\begin{equation*}
		\begin{aligned}
			V_{k+1}^\top V_{k+1}&=\sum_{i=2}^{k-j_{k}+1}\gamma_{j_{k}+i-1}\underline{s_{j_{k}+i-1}}\mathbbm{1}_{k-j_{k}}^{i-1}(\mathbbm{1}_{k-j_{k}}^{i-1})^\top,\\
			&=\sum_{i=1}^{k+1-j_{k+1}}\gamma_{j_{k+1}+i-1}\underline{s_{j_{k+1}+i-1}}\mathbbm{1}_{k+1-j_{k+1}}^{i}(\mathbbm{1}_{k+1-j_{k+1}}^{i})^\top
		\end{aligned}
	\end{equation*}
	as desired.
By  induction, we conclude that for all \(k \geq \ell\), the matrix \(V_{k+1}^\top V_{k+1}\) can also admit the desired representation.
	This completes the proof of this lemma.
\end{proof}

\subsection{Proof of Lemmas \ref{Thm-Pi_k-Krylov} and  \ref{xie-equ-krylov}}

 \begin{proof}[Proof of Lemma \ref{Thm-Pi_k-Krylov}]
	First, we prove that \(\Pi_k = \Theta_k\). By the iteration scheme \eqref{xk1-scheme}, we know that
	\[
	x^{k+1} = x^{k} + V_k \overline{s_k} - \underline{s_k} A^\top S_k S_k^\top r^k \in \Pi_k,
	\]
	which implies that all generating points of \(\Theta_k\) belong to \(\Pi_k\), thus \(\Theta_k \subseteq \Pi_k\).
	Conversely, for any \(x \in \Pi_k\), there exist coefficients \(\{\lambda_i\}_{i = 1}^{k-j_k+1}\) such that
	\[
	\begin{aligned}
		x &= x^k + \sum_{i=j_k}^{k-1} \lambda_{i-j_k+1}(x^i - x^k) - \lambda_{k-j_k+1} A^\top S_k S_k^\top r^k \\
		&= x^k + \sum_{i=j_k}^{k-1} \lambda_{i-j_k+1}(x^i - x^k) + \frac{\lambda_{k-j_k+1}}{\underline{s_k}}(x^{k+1} - x^k - V_k \overline{s_k}) \\
		&\in \Theta_k,
	\end{aligned}
	\]
	where the second equality follows from \eqref{xk1-scheme}. By the arbitrariness of \(x\), we conclude \(\Pi_k \subseteq \Theta_k\). Hence, \(\Pi_k = \Theta_k\).
	
	Next, we show that \(\Theta_k = \Lambda_k\) when \(\ell = \infty\). In this case, \(j_k = 0\). By the iteration scheme \eqref{xk1-scheme}, we know that for any \(i = 0, 1, \ldots, k\),
	\[
	\begin{aligned}
		A^\top S_i S_i^\top r^i &= \frac{1}{\underline{s_i}}\left(-x^{i+1} + x^i + V_i \overline{s_i}\right) \\
		&\in \text{span}\{x^1 - x^0, \ldots, x^{i+1} - x^0\} \\
		&\subseteq \text{span}\{x^1 - x^0, \ldots, x^{k+1} - x^0\},
	\end{aligned}
	\]
	which indicates that \(\Lambda_k \subseteq \Theta_k\).
	Thus, we only need to prove that \(\Theta_k \subseteq \Lambda_k\). When \(k = 0\), we know that 
	\[
	\Theta_0 = x^0 + \operatorname{span}\{x^1 - x^0\} = x^0 + \operatorname{span}\{A^\top S_0 S_0^\top (Ax^0 - b)\} = \Lambda_0.
	\]
	By induction, assume \(\Theta_i \subseteq \Lambda_i\) for \(i = 0, \ldots, k-1\). Hence, for any \(i = 0, \ldots, k-1\),
	\[
	\begin{aligned}
		x^{i+1} &\in x^0 + \text{span}\{A^\top S_0 S_0^\top r^0, \ldots, A^\top S_i S_i^\top r^i\} \\
		&\subseteq x^0 + \text{span}\{A^\top S_0 S_0^\top r^0, \ldots, A^\top S_{k-1} S_{k-1}^\top r^{k-1}\}.
	\end{aligned}
	\]
	Thus, there exist coefficients \(\{\lambda_j^i\}_{j=0}^i\) such that \(x^{i+1} = x^0 + \sum_{j=0}^{i} \lambda_{j}^{i} A^\top S_j S_j^\top r^j\). For any \(x \in \Pi_k=\Theta_k\), there exist coefficients \(\{\mu_i\}_{i=1}^{k+1}\) such that 
	\[
	\begin{aligned}
		x &= x^0 + \sum_{i=1}^{k} \mu_i (x^i - x^0) + \mu_{k+1} (x^k - x^0 - A^\top S_k S_k^\top r^k) \\
		&= x^0 + \sum_{i=1}^{k-1} \mu_i (x^i - x^0) + (\mu_k + \mu_{k+1})(x^k - x^0) - \mu_{k+1} A^\top S_k S_k^\top r^k \\
		&= x^0 + \sum_{i=1}^{k-1} \sum_{j=0}^{i-1} \mu_i \lambda_{j}^{i-1} A^\top S_j S_j^\top r^j + (\mu_k + \mu_{k+1}) \sum_{j=0}^{k-1} \lambda_{j}^{k-1} A^\top S_j S_j^\top r^j - \mu_{k+1} A^\top S_k S_k^\top r^k \\
		&\in \Lambda_k.
	\end{aligned}
	\]
	Hence, \(\Theta_k \subseteq \Lambda_k\). This completes the proof of the theorem.
\end{proof}

   \begin{proof}[Proof of Lemma \ref{xie-equ-krylov}]
	For convenience, let \(\mathcal{H}_k := \text{span}\{A^\top r^0, \ldots, A^\top r^k\}\). If \(k = 0\), then \(\mathcal{H}_0 = \operatorname{span}\{A^\top r^0\} = \mathcal{K}_{1}(A^\top A, A^\top r^0)\). Next, we consider the case where \(k \geq 1\).
	
	First, we demonstrate that \(\mathcal{H}_k \subseteq \mathcal{K}_{k+1}(A^\top A, A^\top r^0)\). By induction, assume \(\mathcal{H}_i \subseteq \mathcal{K}_{i+1}(A^\top A, A^\top r^0)\) for \(i = 0, \ldots, k-1\). According to Lemma \ref{Thm-Pi_k-Krylov}, we have 
	\[\mathcal{H}_i = \Theta_i - x^0 = \operatorname{span}\{x^1 - x^0, \ldots, x^{i+1} - x^0\}.\]
	Thus, \(x^{i+1} - x^0 \in \mathcal{K}_{i+1}(A^\top A, A^\top r^0)\). Hence, for any \(i = 0, \ldots, k-1\), there exist coefficients \(\{\lambda_j^i\}_{j=0}^i\) such that \(x^{i+1} - x^0 = \sum_{j=0}^{i}\lambda^i_j (A^\top A)^j A^\top r^0\), and
	\begin{equation}\label{proof-xie-0328-l1}
		\begin{aligned}
			A^\top r^{i+1} &= A^\top r^0 + A^\top A(x^{i+1} - x^0) = A^\top r^0 + \sum_{j=0}^i \lambda^i_j (A^\top A)^{j+1} A^\top r^0 \\
			&\in \mathcal{K}_{i+2}(A^\top A, A^\top r^0) \subseteq \mathcal{K}_{k+1}(A^\top A, A^\top r^0).
		\end{aligned}
	\end{equation}
	This implies that all generating vectors of \(\mathcal{H}_k\) belong to \(\mathcal{K}_{k+1}(A^\top A, A^\top r^0)\). Therefore, \(\mathcal{H}_k \subseteq \mathcal{K}_{k+1}(A^\top A, A^\top r^0)\).
	
	Next, we show that \(\mathcal{K}_{k+1}(A^\top A, A^\top r^0) \subseteq \mathcal{H}_k\). By induction, assume \(\mathcal{K}_{i+1}(A^\top A, A^\top r^0) \subseteq \mathcal{H}_i\) for \(i = 0, \ldots, k-1\). Since we have already proven that for any \(s \geq 0\), it holds that \(\mathcal{H}_s \subseteq \mathcal{K}_{s+1}(A^\top A, A^\top r^0)\). Therefore, for any \(x^{i+1} - x^0 \in \mathcal{H}_i\), we have \(x^{i+1} - x^0 \in \mathcal{K}_{i+1}(A^\top A, A^\top r^0)\). Thus, there exist coefficients \(\{\lambda_j^i\}_{j=0}^i\) such that \(x^{i+1} - x^0 = \sum_{j=0}^{i} \lambda^i_j (A^\top A)^j A^\top r^0\). Since \(x^1-x^0, \ldots, x^k-x^0\) are linearly independent, we conclude that \(\lambda_i^i \neq 0\) for \(i = 1, 2, \ldots, k-1\). Otherwise, if there exists an index \(i_0\) such that \(\lambda_{i_0}^{i_0} = 0\), then
	\[
	x^{i_0+1} - x^0 = \sum_{j=0}^{i_0-1} \lambda^{i_0}_j (A^\top A)^j A^\top r^0 \in \mathcal{K}_{i_0}(A^\top A, A^\top r^0) \subseteq \mathcal{H}_{i_0-1},
	\]
	which contradicts the linear independence of \(x^1-x^0, \ldots, x^{i_0}-x^0\). Here, we define \(\mathcal{H}_{-1} = \{0\}\).
	For any \(x \in \mathcal{K}_{k+1}(A^\top A, A^\top r^0)\), we know that there exist coefficients \(\{\mu_j\}_{j=0}^k\) such that
	\[
	\begin{aligned}
		x &= \sum_{j=0}^{k-1} \mu_j (A^\top A)^j A^\top r^0 + \mu_k (A^\top A)^k A^\top r^0 \\
		&= \sum_{j=0}^{k-1} \mu_j (A^\top A)^j A^\top r^0 + \frac{\mu_k}{\lambda_{k-1}^{k-1}} \left(A^\top r^k - A^\top r^0 - \sum_{j=0}^{k-2} \lambda^i_j (A^\top A)^{j+1} A^\top r^0\right) \\
		&\in \text{span}\{A^\top r^0, \ldots, A^\top r^k\},
	\end{aligned}
	\]
	where the second equality  follows from  \eqref{proof-xie-0328-l1} where we have $$(A^\top A)^k A^\top r^0=\frac{1}{\lambda_{k-1}^{k-1}}\left(A^\top r^k-A^\top r^0-\sum_{j=0}^{k-2}\lambda^i_j (A^\top A)^{j+1} A^\top r^0\right),$$
	which is well-defined as $\lambda_{k-1}^{k-1}\neq 0$.
	By the arbitrariness of \(x\), we conclude \(\mathcal{K}_{k+1}(A^\top A, A^\top r^0) \subseteq \mathcal{H}_k\). Hence, we can conclude that \(\mathcal{H}_k = \mathcal{K}_{k+1}(A^\top A, A^\top r^0)\). This completes the proof of the lemma.
\end{proof}

\subsection{Proof of Theorem \ref{thm-eq-0330} and Proposition \ref{prop-property-Krylov}}
  \begin{proof}[Proof of Theorem \ref{thm-eq-0330}]
	By the iteration scheme \eqref{xk1-scheme}, we know
	\begin{equation*}
		\begin{aligned}
			x^{k+1}&=x^k+V_k\overline{s_k}-\underline{s_k}d_k=x^k-\underline{s_k}V_k(V_k^\top V_k)^{-1}V_k^\top d_k+\underline{s_k}d_k.
		\end{aligned}
	\end{equation*}
	Let $\tilde{p}_k:=x^{k+1}-x^k$. Then we have 
	\begin{equation*}
		\begin{aligned}
			\tilde{p}_{k+1}&=-\underline{s_{k+1}}V_{k+1}(V_{k+1}^\top V_{k+1})^{-1}V_{k+1}^\top d_{k+1}+\underline{s_{k+1}}d_{k+1}.
		\end{aligned}
	\end{equation*}
	Next, we will examine the the matrix $V_{k+1}(V_{k+1}^\top V_{k+1})^{-1}V_{k+1}^\top$. Note that $(V_{k+1}^\top V_{k+1})^{-1}=C(\gamma_{j_{k+1}}\underline{s_{j_{k+1}}},\cdots,\gamma_{k}\underline{s_k})$, we have
	\begin{equation*}
		\begin{aligned}
			&V_{k+1}(V_{k+1}^\top V_{k+1})^{-1}V_{k+1}^\top=\begin{pmatrix}
				x^{j_{k+1}}-x^{k+1}&\ldots&x^{k}-x^{k+1}
			\end{pmatrix}(V_{k+1}^\top V_{k+1})^{-1}
			\begin{pmatrix}
				(x^{j_{k+1}}-x^{k+1})^\top\\
				\vdots\\
				(x^{k}-x^{k+1})^\top\\
			\end{pmatrix}\\
			=&\frac{(x^{j_{k+1}}-x^{k+1})(x^{j_{k+1}}-x^{k+1})^\top}{\gamma_{j_{k+1}}\underline{s_{j_{k+1}}}}-\frac{(x^{j_{k+1}+1}-x^{k+1})(x^{j_{k+1}}-x^{k+1})^\top}{\gamma_{j_{k+1}}\underline{s_{j_{k+1}}}}\\
			&+\sum_{i=j_{k+1}+1}^{k}\left(-\frac{x^{i-1}-x^{k+1}}{\gamma_{i-1}\underline{s_{i-1}}}+\frac{x^{i}-x^{k+1}}{\gamma_{i-1}\underline{s_{i-1}}}+\frac{x^{i}-x^{k+1}}{\gamma_{i}\underline{s_{i}}}-\frac{x^{i+1}-x^{k+1}}{\gamma_{i}\underline{s_{i}}}\right)(x^{i}-x^{k+1})^\top\\
			=&\frac{(x^{j_{k+1}}-x^{k+1})(x^{j_{k+1}}-x^{k+1})^\top}{\gamma_{j_{k+1}}\underline{s_{j_{k+1}}}}-\frac{(x^{j_{k+1}+1}-x^{k+1})(x^{j_{k+1}}-x^{k+1})^\top}{\gamma_{j_{k+1}}\underline{s_{j_{k+1}}}}\\
			&+\sum_{i=j_{k+1}+1}^{k}\left(\frac{(x^{i}-x^{i-1})(x^{i}-x^{k+1})^\top}{\gamma_{i-1}\underline{s_{i-1}}}-\frac{(x^{i+1}-x^{i})(x^{i}-x^{k+1})^\top}{\gamma_{i}\underline{s_{i}}}\right)\\
			=&\frac{(x^{j_{k+1}}-x^{j_{k+1}+1})(x^{j_{k+1}}-x^{k+1})^\top}{\gamma_{j_{k+1}}\underline{s_{j_{k+1}}}}+\frac{(x^{j_{k+1}+1}-x^{j_{k+1}})(x^{j_{k+1}+1}-x^{k+1})^\top}{\gamma_{j_{k+1}}\underline{s_{j_{k+1}}}}\\
			&+\sum_{i=j_{k+1}+1}^{k-1}\frac{(x^{i+1}-x^{i})(x^{i+1}-x^{i})^\top}{\gamma_{i}\underline{s_{i}}}-\frac{(x^{k+1}-x^{k})(x^{k}-x^{k+1})^\top}{\gamma_{k}\underline{s_{k}}}\\
			=&\sum_{i=j_{k+1}}^{k}\frac{(x^{i+1}-x^{i})(x^{i+1}-x^{i})^\top}{\gamma_{i}\underline{s_{i}}}
			=\sum_{i=j_{k+1}}^{k}\frac{\tilde{p}_i\tilde{p}_i^\top}{\gamma_{i}\underline{s_{i}}}.
		\end{aligned}
	\end{equation*}
	We conclude that $p_k$ in \eqref{iter-krylov} satisfies $p_k=\frac{\tilde{p}_k}{\underline{s_k}}$. Indeed, let $\overline{p}_k:=\frac{\tilde{p}_k}{\underline{s_k}}$, then $\overline{p}_k$ has the following recursive relationship
	\begin{equation*}
		\begin{aligned}
			\overline{p}_{k+1}-d_{k+1}&=-V_{k+1}(V_{k+1}^\top V_{k+1})^{-1}V_{k+1}^\top d_{k+1}
			=-\sum_{i=j_{k+1}}^{k}\frac{\langle\tilde{p}_i,d_{k+1}\rangle}{\gamma_{i}\underline{s_{i}}}\tilde{p}_i
			\\
			&=-\sum_{i=j_{k+1}}^{k}\frac{\langle\tilde{p}_i,d_{k+1}\rangle}{\lVert \tilde{p}_{i}\rVert^2_2}\tilde{p}_i
			=-\sum_{i=j_{k+1}}^{k}\frac{\langle\overline{p}_i,d_{k+1}\rangle}{\lVert \overline{p}_{i}\rVert^2_2}\overline{p}_i,
		\end{aligned}
	\end{equation*}
	where the third equality follows from 
	\begin{equation}\label{proof-thm-xie-0329}
		\lVert \tilde{p}_i\rVert_2^2=\underline{s_i}^2(\lVert d_i\rVert_2^2-d_i^\top V_i(V_i^\top V_i)^{-1}V_i^\top d_i)=\gamma_i\underline{s_i}.
	\end{equation}
	Note that \(\overline{p}_0 = \frac{\tilde{p}_0}{\underline{s_0}} = \frac{x^1 - x^0}{\underline{s_0}} = -A^\top S_0 S_0^\top (Ax^0 - b) = p_0\). Hence, for \(k \geq 1\), we also have \(\overline{p}_k = p_k\). Consequently, we obtain that \(\overline{p}_k = p_k\) for all \(k \geq 0\), i.e., \(p_k = \frac{\tilde{p}_k}{\underline{s_k}}\). Thus, we have 
	\[
	x^{k+1} = x^k + \tilde{p}_k = x^k + \underline{s_k} p_k.
	\]
	Next, we show that \(\underline{s_k} = \delta_k\). Indeed, we have 
	\[
	\lVert p_k \rVert_2^2 = \frac{\lVert \tilde{p}_k \rVert_2^2}{\underline{s_k}^2} = \frac{\gamma_k \underline{s_k}}{\underline{s_k}^2} = \frac{\gamma_k}{\underline{s_k}},
	\]
	where the second equality follows from \eqref{proof-thm-xie-0329}. Hence, 
	\[
	\underline{s_k} = \frac{\gamma_k}{\lVert p_k \rVert_2^2} = \delta_k,
	\]
	where the last equality follows from Step 3  in Algorithm \ref{Algo-3}. This completes the proof of the theorem.
\end{proof}

  \begin{proof}[Proof of Proposition \ref{prop-property-Krylov}] (i) Since  $j_k\leq t< v\leq k$, we have
	\begin{equation}\label{proof-prop-0329-1}
		\begin{aligned}
			\langle p_{v}, p_t\rangle=&\frac{\langle x^{v+1}-x^v,x^{t+1}-x^t\rangle}{\delta_{v}\delta_t}=\frac{\langle x^{v+1}-A^\dagger b,x^{t+1}-x^t\rangle}{\delta_{v}\delta_t}+\frac{\langle A^\dagger b-x^v, x^{t+1}-x^t\rangle}{\delta_{v}\delta_t}\\
			=&\frac{\langle x^{v+1}-A^\dagger b,x^{t+1}-x^{v+1}\rangle}{\delta_{v}\delta_t}+\frac{\langle x^{v+1}-A^\dagger b,x^{v+1}-x^{t}\rangle}{\delta_{v}\delta_t}\\
			&+\frac{\langle A^\dagger b-x^v,x^{t+1}-x^v\rangle}{\delta_{v}\delta_t}+\frac{\langle A^\dagger b-x^v, x^v-x^t\rangle}{\delta_{v}\delta_t}
			\\=&0,
		\end{aligned}
	\end{equation}
	where the last equality follows from \eqref{equality-opt}. 
	
	(ii) For any $j_k\leq t< v\leq k$, we have 
	\[
	r^v=Ax^v-b=Ax^{v-1}+\delta_{v-1}Ap_{v-1}-b=r^{v-1}+\delta_{v-1}Ap_{v-1}=\cdots=r^{t}+\sum_{w=t}^{v-1}\delta_wAp_w,
	\]
	which together with  \eqref{iter-krylov} implies that
	\begin{equation*}
		\begin{aligned}
			\langle S_t^\top r^v,S_t^\top r^t\rangle&=\lVert S_t^\top r^t\rVert_2^2+\sum_{w=t}^{v-1}\delta_w\langle S_t^\top Ap_w,S_t^\top r^t\rangle 			
			=\lVert S_t^\top r^t\rVert_2^2-\sum_{w=t}^{v-1}\delta_w\langle p_w,d_t\rangle\\
			&=\lVert S_t^\top r^t\rVert_2^2-\sum_{w=t}^{v-1}\delta_w\langle p_w,p_{t}+\sum_{i=j_{t}}^{t-1}\eta_{t}^ip_i\rangle\\
			&=\lVert S_t^\top r^t\rVert_2^2-\sum_{w=t}^{v-1}\delta_w\langle p_w,p_{t}\rangle-\sum_{w=t}^{v-1}\sum_{i=j_{t}}^{t-1}\delta_w\eta_{t}^i\langle p_w,p_i\rangle\\
			&=\lVert S_t^\top r^t\rVert_2^2-\delta_t\lVert p_t\rVert_2^2-\sum_{w=t}^{v-1}\sum_{i=j_{t}}^{t-1}\delta_w\eta_{t}^i\langle p_w,p_i\rangle
			=-\sum_{w=t}^{v-1}\sum_{i=j_{t}}^{t-1}\delta_w\eta_{t}^i\langle p_w,p_i\rangle.
		\end{aligned}
	\end{equation*}
	Therefore, if $v=t+1$, we have 
	$$
	\langle S_t^\top r^v,S_t^\top r^t\rangle=-\sum_{w=t}^{t}\sum_{i=j_{t}}^{t-1}\delta_w\eta_{t}^i\langle p_w,p_i\rangle=-\delta_t\sum_{i=j_{t}}^{t-1}\eta_{t}^i\langle p_t,p_i\rangle=0.
	$$
	If $j_t=j_k$, then from (i) we have
	$$
	\langle S_t^\top r^v,S_t^\top r^t\rangle=-\sum_{w=t}^{v-1}\sum_{i=j_{t}}^{t-1}\delta_w\eta_{t}^i\langle p_w,p_i\rangle=-\sum_{w=t}^{v-1}\sum_{i=j_{k}}^{t-1}\delta_w\eta_{t}^i\langle p_w,p_i\rangle=0.
	$$
	Otherwise, from (i) we have
	$$
	\langle S_t^\top r^v,S_t^\top r^t\rangle=-\sum_{w=t}^{v-1}\sum_{i=j_{t}}^{t-1}\delta_w\eta_{t}^i\langle p_w,p_i\rangle=-\sum_{w=t}^{v-1}\sum_{i=j_{t}}^{j_k-1}\delta_w\eta_{t}^i\langle p_w,p_i\rangle.
	$$
	This completes the proof of this proposition.
\end{proof}

\end{document}